\newcommand{\ld}{\ensuremath{,\ldots,}}
\newcommand{\ssq}{\ensuremath{\subseteq}}
\newcommand{\smin}{\ensuremath{\setminus}}
\newcommand{\eps}{\ensuremath{\varepsilon}}
\newcommand{\N}{\ensuremath{\mathbb{N}}}
\newcommand{\R}{\ensuremath{\mathbb{R}}}
\newcommand{\C}{\ensuremath{\mathbb{C}}}
\newcommand{\D}{\ensuremath{\mathbb{D}}}
\newcommand{\graph}{\ensuremath{\mathrm{graph}}}
\newcommand{\kreis}{\ensuremath{\mathbb{T}^{1}}}
\newcommand{\sltr}{\ensuremath{\textrm{SL}(2,\mathbb{R})}}
\newcommand{\twomatrix}[4]{\ensuremath{\left(\begin{array}{cc} #1 & #2 \\ #3 &
      #4 \end{array}\right)}}
\newcommand{\alphlist}{\begin{list}{(\alph{enumi})}{\usecounter{enumi}\setlength{\parsep}{2pt}
      \setlength{\itemsep}{1pt} \setlength{\topsep}{5pt}
      \setlength{\partopsep}{3pt}}}
\newcommand{\arablist}{\begin{list}{(\arabic{enumi})}{\usecounter{enumi}\setlength{\parsep}{2pt}
          \setlength{\itemsep}{1pt} \setlength{\topsep}{5pt}
          \setlength{\partopsep}{3pt}}}
\newcommand{\romanlist}{\begin{list}{(\roman{enumi})}{\usecounter{enumi}\setlength{\parsep}{2pt}
              \setlength{\itemsep}{1pt} \setlength{\topsep}{5pt}
              \setlength{\partopsep}{3pt}}}
 \newcommand{\listend}{\end{list}}
\newcommand{\bulletlist}{\begin{list}{$\bullet$}{\setlength{\parsep}{2pt}
                \setlength{\itemsep}{1pt} \setlength{\topsep}{5pt}
                \setlength{\partopsep}{3pt}\setlength{\leftmargin}{15pt}}}
\newcommand{\foot}{\footnote}
\newcommand{\nfolge}[1]{\ensuremath{(#1)_{n\in\mathbb{N}}}}
\newcommand{\ncap}{\ensuremath{\bigcap_{n\in\N}}}
\newcommand{\nLim}{\ensuremath{\lim_{n\rightarrow\infty}}}
\newcommand{\kLim}{\ensuremath{\lim_{k\rightarrow\infty}}}
\newcommand{\nKonv}{\ensuremath{\stackrel{n\rightarrow
      \infty}{\longrightarrow}}}
\newcommand{\insum}{\ensuremath{\sum_{i=1}^n}}
\newcommand{\inergsum}{\ensuremath{\sum_{i=0}^{n-1}}}
\newcommand{\knergsum}{\ensuremath{\sum_{k=0}^{n-1}}}
\newcommand{\ntel}{\ensuremath{\frac{1}{n}}}
\newcommand{\halb}{\ensuremath{\frac{1}{2}}}
\newtheoremstyle{tobthm}{3pt}{3pt}{\itshape}{0pt}{\bfseries}{.}{0.5eM}{}
\theoremstyle{tobthm}
\newtheorem{definition}{Definition}[section]
\newtheorem{thm}[definition]{Theorem}
\newtheorem{theorem}[definition]{Theorem}
\newtheorem{lem}[definition]{Lemma}
\newtheorem{lemma}[definition]{Lemma}
\newtheorem{cor}[definition]{Corollary}
\newtheorem{prop}[definition]{Proposition}
\newtheorem{proposition}[definition]{Proposition}
\newtheoremstyle{tobrem}{3pt}{3pt}{\normalfont}{0pt}{\bfseries}{.}{0.5em}{}
\theoremstyle{tobrem} 
\newtheorem{rem}[definition]{Remark} 
\newtheorem{questions}[definition]{Questions}
\newcommand{\base}{\ensuremath{\gamma}}
\newcommand{\twovector}[2]{\ensuremath{\left(\begin{array}{c} #1 \\ #2 \end{array}\right)}}
\numberwithin{equation}{section}
\numberwithin{figure}{section}
\title{\Large\textsc{A model for the nonautonomous Hopf bifurcation}}
\author{V.~Anagnostopoulou \and T.~J\"ager \and G.~Keller}
\newcommand{\cM}{\mathcal{M}}
\newcommand{\lmax}{\ensuremath{\lambda_{\mathrm{max}}}}
\newcommand{\Romanlist}{\begin{list}{(\Roman{enumi})}{\usecounter{enumi}\setlength{\parsep}{2pt}
              \setlength{\itemsep}{1pt} \setlength{\topsep}{5pt}
              \setlength{\partopsep}{3pt}}}
\begin{document}

\setlength{\abovedisplayskip}{0.8ex}
\setlength{\abovedisplayshortskip}{0.6ex}

\setlength{\belowdisplayskip}{0.8ex}
\setlength{\belowdisplayshortskip}{0.6ex}

\maketitle

\abstract{Inspired by an example of Grebogi {\em et al}
  \cite{grebogi/ott/pelikan/yorke:1984}, we study a class of model
  systems which exhibit the full two-step scenario for the
  nonautonomous Hopf bifurcation, as proposed by Arnold
  \cite{arnold:1998}. The specific structure of these models allows a
  rigorous and thorough analysis of the bifurcation pattern. In
  particular, we show the existence of an invariant {\em `generalised
    torus'} splitting off a previously stable central manifold after
  the second bifurcation point.

  The scenario is described in two different settings. First, we
  consider deterministically forced models, which can be treated as
  continuous skew product systems on a compact product space.
  Secondly, we treat randomly forced systems, which lead to skew
  products over a measure-preserving base transformation. In the
  random case, a semiuniform ergodic theorem for random dynamical
  systems is required, to make up for the lack of compactness.
  \medskip

  \noindent {\em 2010 Mathematics Subject Classification.} Primary
  39A28, 37H20, 34C23}
  \smallskip

 \noindent{\em Keywords: Skew Products, Random Dynamical Systems,
  Nonautonomous Hopf Bifurcation.}\\

\noindent
\section{Introduction}

External forcing often leads to important changes in the bifurcation
pattern of dynamical systems. Yet, despite the relevance of this
issue in many applications and significant progress over the last
decades
(see \cite{arnold:1998,rasmussen2007attractivity,Poetzsche2011bifurcations}
for an overview and
\cite{rasmussen2006towards,rasmussen2007nonautonomous,zmarrou/homburg:2007,zmarrou/homburg:2008}
for some recent advances), our understanding of non-autonomous
bifurcations is still limited.  Maybe the most prominent example for
this is the non-autonomous Hopf bifurcation
\cite{arnold:1998,johnson/kloeden/pavani:2002,BottsHomburgYoung2011hopf}.
Here, external forcing can lead to the separation of the
complex-conjugate eigenvalues \cite{arnoldetal:1986}.  This gives rise
to a two-step bifurcation scenario, in which an invariant {\em
  `torus'} splits off a previously stable central manifold
\cite[Chapter 9.4]{arnold:1998}. However, so far this phenomenological
description is mainly based on numerical evidence, and up to date
there exist no non-trivial examples for which this bifurcation pattern
can be described analytically. In particular, it is an open problem to
describe the structure of the split-off {\em `torus'}. Earlier
simulations suggested that this structure is simple, in the sense that
the intersection with each fibre of the product space is a topological
circle \cite{arnoldetal:1986}.\foot{In the simple case where the
  driving process is an irrational rotation on the circle, this means
  that the considered invariant set is homeomorphic to the
  two-dimensional torus. This explains our terminology.}  However,
later numerical studies based on refined algorithms indicate that more
complicated structures may appear as well
\cite{KellerOchs1999numerical} .

The aim of this article is to give a description of the non-autonomous
Hopf bifurcation in a class of model systems which is accessible to a
rigorous analysis, but at the same time allows for highly non-trivial
dynamics.  For the sake of a simpler exposition we focus on
discrete-time systems, although continuous-time analogues are easy to
derive (see Section~\ref{CtsModels}). In the situation we consider,
the split-off {\em `torus'} consists of a topological circle in each
fibre and hence belongs to the simpler case described above, but this
should not be taken as an indication for the general case.
\medskip\pagebreak

\noindent
We study parametrised families of skew products
\begin{equation}\label{f_beta}
  f_\beta \ : \ \Theta\times \R^2 \to \Theta \times \R^2\quad , \quad
  (\theta,v)\mapsto (\base(\theta),f_{\beta,\theta}(v))
\end{equation}
with {\em fibre maps}
\begin{equation} \label{f_beta-fibres} f_{\beta,\theta}(v) \ = \
  \left\{\begin{array}{cl} h(\beta\|v\|) A(\theta) \frac{v}{\|v\|} &
      \textrm{if } v\neq 0\\ \ \\0 & \textrm{if } v=0
\end{array}\right. \quad ,
\end{equation}
where $\|\cdot\|$ denotes the Euclidean norm on $\R^2$ and
$\beta\in\R^+$ is the bifurcation parameter. Maps of this type were
introduced by Grebogi {\em et al}
\cite{grebogi/ott/pelikan/yorke:1984} as examples for the existence of
strange non-chaotic attractors. A first step in their rigorous
analysis was made in \cite{glendinning/jaeger/keller:2006}, and our
results on continuous systems can be seen as an extension of this work
(see Theorem~\ref{theorem_topological} and Section~\ref{top.setting}).

We consider two different settings. For modelling {\em
  deterministic forcing}, we assume that
\begin{itemize}
\item[\bf (D1)] $\Theta$ is a compact metric space and $\base$ is a
  homeomorphism;
\item[\bf (D2)] $h:\R^+\to\R^+$ is $\mathcal{ C}^2$, strictly
  increasing, strictly concave, bounded and satisfies $h(0)=0$
  and $h'(0)=1$;
\item[\bf (D3)] $A:\Theta \to \sltr$ is continuous.
\end{itemize}

In order to give a concise description of the bifurcation pattern in
this setting, we concentrate on the behaviour of the global attractor of $f_\beta$.  By rescaling if
necessary, we may and will assume
\begin{equation} \label{e.scaling}
  \sup_{x\geq 0}h(x) \ \leq\
  \left(\max_{\theta\in\Theta}\|A(\theta)\|\right)^{-1} \ .
\end{equation}
Consequently $f_\beta(\Theta\times\R^2) \ssq \Theta\times
\overline{B_1(0)}$, so that the {\em global attractor} can be defined
as
\begin{equation}\label{e.global-attractor}
\mathcal{A}_\beta\ =\ \bigcap_{n\in\N} f_\beta^n\left(\Theta\times \overline{B_1(0)}\right) \ .
\end{equation}
We let $\mathcal{A}_\beta(\theta)=\{x\in\R^2\mid
(\theta,x)\in\mathcal{A}_\beta\}$ and use the analogous notation for
other subsets of product spaces.  As we will see, the particular
structure of \eqref{f_beta} implies that $\mathcal{A}_\beta$ has
the form
\begin{equation}
  \label{e.global-attractor-parametrisation}
  \mathcal{A}_\beta \  = \
  \left\{\left(\left.\theta,rv(\alpha)\right) \
    \right|\ \theta\in\Theta,\ \alpha\in[0,1),\ r\in[0,r_\beta(\theta,\alpha)] \right\} \ ,
\end{equation}
where $v(\alpha)=(\cos(2\pi\alpha),\sin(2\pi\alpha))^t$ and $r_\beta
:\Theta\times\R\to [0,1]$ is an upper semi-continuous function which
is \halb-periodic in the second variable.
The bifurcation parameters in the above system are determined by the
maximal exponential expansion rate of the cocycle $(\gamma,A)$. The
latter is given by the {\em maximal Lyapunov exponent} of $A$,
\begin{equation}\label{e.lmax_top}
\lmax(A) \ = \ \sup_{\theta\in\Theta} \limsup_{n\to\infty}
\frac 1n \log\|A_n(\theta)\| \ ,
\end{equation}
where $A_n(\theta) = A(\base^{n-1}\theta) \circ \ldots \circ
A(\theta)$.

\begin{theorem}\label{theorem_topological} Suppose $(f_\beta)_{\beta\in\R^+}$ is of the form
  (\ref{f_beta}) and satisfies conditions (D1)--(D3). Let
 \[ \beta_1\ :=\ e^{-\lmax} \quad \textrm{and} \quad \beta_2\ :=\ e^{\lmax} \ .
\]
Then the following hold.
\alphlist
\item If $\beta< \beta_1$, then the global attractor
  $\mathcal{A}_\beta$ is equal to $\Theta\times \{0\}$.
\item If $\beta_1<\beta<\beta_2$, then there exists at least one
  $\theta^*\in\Theta$ such that $\mathcal{A}_\beta(\theta^*)$ is a
  line segment of positive length.
\item If $\beta > \beta_2$, then for all $\theta\in\Theta$ the set
  $\mathcal{A}_\beta(\theta)$ is a closed topological disk\foot{That
    is, homeomorphic to the closed unit disk $\D=\{z\in\C\mid
    |z|\leq1\}$.} and depends continuously on $\theta$. In other words,
  the function $r_\beta$ is strictly positive and continuous.

  Further, the compact $f_\beta$-invariant set
  \begin{equation} \label{e.Theta-torus} \mathcal{T}_\beta \ = \
    \partial\mathcal{A}_\beta \ =
    \ \left\{\left(\left.\theta,r_\beta(\theta,\alpha)v(\alpha)\right) \
   \right|\ \theta\in\Theta,\ \alpha\in[0,1) \right\} \
\end{equation}
is the global attractor outside $\Theta\times\{0\}$, in the sense that
\[
\mathcal{T}_\beta \ = \ \ncap
f^n_\beta\left(\Theta\times\left(\overline{B_1(0)}\smin B_\delta(0)\right)\right)
\]
for all sufficiently small $\delta>0$.
\listend
\end{theorem}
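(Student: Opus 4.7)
The strategy is to pass to polar coordinates $v=rv(\alpha)$ on $\R^2\smin\{0\}$, under which \eqref{f_beta} becomes a triangular skew system
\[
(\theta,\alpha,r)\ \longmapsto\ \bigl(\base\theta,\ P_\theta(\alpha),\ h(\beta r)\,c(\theta,\alpha)\bigr),
\]
where $P_\theta$ is the projective action of $A(\theta)$ and $c(\theta,\alpha)=\|A(\theta)v(\alpha)\|$. The fibre map sends rays through $0$ to rays through $0$ via the strictly monotone assignment $r\mapsto h(\beta r)c(\theta,\alpha)$, so forward images of sets that are star-shaped w.r.t.\ $0$ remain star-shaped. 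Applied to $\overline{B_1(0)}$ this shows that $\mathcal{A}_\beta(\theta)=\bigcap_n F_n(\theta)$, where $F_n(\theta):=f^n_{\beta,\base^{-n}\theta}(\overline{B_1(0)})$, is star-shaped. This immediately yields the parametrisation \eqref{e.global-attractor-parametrisation} with $r_\beta(\theta,\alpha)=\sup\{r\geq 0:rv(\alpha)\in\mathcal{A}_\beta(\theta)\}$, upper semi-continuous by compactness of $\mathcal{A}_\beta$ and $\halb$-periodic because $f_{\beta,\theta}(-v)=-f_{\beta,\theta}(v)$. Iterating the polar recursion produces the telescoping identity
\[
\prod_{k=0}^{n-1} c(\base^k\theta,\alpha_k)\ =\ \|A_n(\theta)v(\alpha_0)\|,
\]
which is the bridge between the radial dynamics and the Lyapunov behaviour of $A$.

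\smallskip

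\textbf{Part (a).} Concavity with $h(0)=0$, $h'(0)=1$ gives $h(x)\leq x$, whence $r_n\leq\beta^n r_0\|A_n(\theta)v(\alpha_0)\|\leq\beta^n r_0\|A_n(\theta)\|$. The sequence $n\mapsto\log\sup_\theta\|A_n(\theta)\|$ is subadditive, so Fekete's lemma together with the semi-uniform ergodic theorem for continuous cocycles over a compact base yields $\tfrac1n\log\sup_\theta\|A_n(\theta)\|\to\lmax$. For $\beta<\beta_1=e^{-\lmax}$ pick $\eps$ with $\beta e^{\lmax+\eps}<1$; then $r_n\to 0$ uniformly on $\Theta\times\overline{B_1(0)}$, and $\mathcal{A}_\beta=\Theta\times\{0\}$ (the reverse inclusion is trivial).

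\smallskip

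\textbf{Part (c).} For $\beta>\beta_2=e^{\lmax}$, pick $\eta>0$ small and $\delta_0>0$ with $h(x)\geq(1-\eta)x$ on $[0,\delta_0]$. Since $A(\theta)\in\sltr$ satisfies $\|A(\theta)v\|\geq\|v\|/\|A(\theta)\|$, the telescoping identity gives $\|A_n(\theta)v(\alpha_0)\|\geq\|A_n(\theta)\|^{-1}\geq e^{-n(\lmax+\eps)}$ for $n$ large. As long as $\beta r_k\leq\delta_0$ for $k<n$,
\[
r_n\ \geq\ \bigl((1-\eta)\beta\bigr)^n r_0\,\|A_n(\theta)v(\alpha_0)\|\ \geq\ \bigl((1-\eta)\beta e^{-\lmax-\eps}\bigr)^n r_0,
\]
and for $\beta>\beta_2$ the base can be made $>1$. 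Hence every nonzero orbit escapes $B_{\delta_0/\beta}(0)$ in boundedly many steps, uniformly on compact subsets of $\Theta\times(\R^2\smin\{0\})$. This produces $\delta>0$ such that the forward iterates $f_\beta^n\bigl(\Theta\times(\overline{B_1(0)}\smin B_\delta(0))\bigr)$ remain in some $\Theta\times(\overline{B_1(0)}\smin B_{\delta'}(0))$ for all $n$; their intersection is a nonempty compact $f_\beta$-invariant set with fibres bounded away from $0$, forcing $r_\beta\geq\delta'>0$ by star-shapedness. Continuity of $r_\beta$ is then obtained by matching upper semi-continuity with a lower-semi-continuity argument: $r<r_\beta(\theta,\alpha)$ places $(\theta,rv(\alpha))$ in the interior of an open trapping region that persists under small perturbations of $(\theta,\alpha)$. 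With $r_\beta$ continuous and strictly positive, each $\mathcal{A}_\beta(\theta)$ is homeomorphic to $\D$, and $\mathcal{T}_\beta=\partial\mathcal{A}_\beta$ takes the form \eqref{e.Theta-torus}.

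\smallskip

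\textbf{Part (b) and main obstacle.} For $\beta_1<\beta<\beta_2$, the semi-uniform ergodic theorem (applied to a Lyapunov-maximising ergodic invariant measure on $\Theta$) provides $\theta^*\in\Theta$ with $\limsup_n\tfrac1n\log\|A_n(\theta^*)\|=\lmax$, attained along a forward Oseledets direction $\alpha^*$. Selecting $v_0^{(n)}$ at $\base^{-n}\theta^*$ whose image under $A_n(\base^{-n}\theta^*)$ is aligned with $\alpha^*$, the lower expansion estimate combined with the boundedness of $h$ yields subsequential limits $r^*v(\alpha^*)\in\mathcal{A}_\beta(\theta^*)$ with $r^*>0$. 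Conversely, reaching any image direction $\alpha\neq\alpha^*$ requires preimages tilting toward the stable Oseledets direction, giving $r_n\leq\beta^n\|A_n v_0\|\sim(\beta e^{-\lmax})^n\to 0$ since $\beta<\beta_2=e^{\lmax}$. Combined with star-shapedness, $\mathcal{A}_\beta(\theta^*)$ is exactly a line segment of length $r^*$. The principal obstacle throughout is the step upgrading the pointwise Lyapunov bound $\limsup_n\tfrac1n\log\|A_n(\theta)\|\leq\lmax$ to the uniform estimate $\sup_\theta\|A_n(\theta)\|\leq e^{n(\lmax+\eps)}$; this uniformisation, via the semi-uniform ergodic theorem for continuous $\sltr$-cocycles over $(\Theta,\base)$, is what drives the attraction property of $\mathcal{T}_\beta$, the continuity of $r_\beta$, and the sharp thresholds $\beta_1,\beta_2$.
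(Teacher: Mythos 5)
Your overall architecture matches the paper's: pass to polar coordinates to get a triangular skew product, identify the thresholds via the Lyapunov exponent of the cocycle, and use the semi-uniform (Stark--Sturman) ergodic theorem to uniformise. Part (a) is essentially the paper's argument (concavity gives $h(x)\leq x$, hence $r_n\leq\beta^n\|A_n(\theta)\|r_0$, and the uniform bound $\sup_\theta\|A_n(\theta)\|\leq e^{n(\lmax+\eps)}$ finishes it). Your repulsion estimate near $0$ in part (c), via $\|A_n(\theta)v\|\geq\|A_n(\theta)\|^{-1}$ for $\sltr$-cocycles, is a slightly more elementary route to the forward-invariant annulus than the paper's (which applies the semi-uniform theorem to the derivative cocycle over the projective dynamics), and it works. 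Part (b) is only a sketch, but it captures the mechanism the paper uses (the paper routes this through the random-forcing theorem and the Oseledets splitting).

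The genuine gap is the continuity of $r_\beta$ in part (c), and with it the identification of $\mathcal{T}_\beta$ as the attractor outside $\Theta\times\{0\}$. The function $r_\beta(\theta,\alpha)$ is a decreasing limit of the pullback iterates $F^n_{\beta,g^{-n}(\theta,\alpha)}(1)$ and is therefore upper semi-continuous for free; lower semi-continuity is the hard direction, and your argument for it --- that $r<r_\beta(\theta,\alpha)$ places the point ``in the interior of an open trapping region that persists under perturbation'' --- does not work. A point radially below the upper bounding graph lies in $\mathcal{A}_\beta$, but $\mathcal{A}_\beta$ need not contain any open trapping neighbourhood of it; indeed, upper bounding graphs of forced monotone concave systems are discontinuous in general (this is exactly what happens in the intermediate regime $\beta_1<\beta<\beta_2$ of this very model, where $r_\beta$ vanishes except in one direction per fibre, and it is the mechanism behind the strange non-chaotic attractors of Grebogi et al.). What is actually needed, and what the paper supplies, is a uniform fibrewise contraction estimate on the invariant annulus $K$: every ergodic invariant measure on $K$ is a random Dirac measure on an invariant graph (Theorem~\ref{t.measures}), strict concavity forces any such graph other than $\psi^-=0$ to have strictly negative vertical Lyapunov exponent (Theorem~\ref{t.convexity}), and the semi-uniform criterion of Lemma~\ref{l.curve-criterium} then upgrades this to the statement that $\bigcap_n F_\beta^n(K)$ is a single continuous curve. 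This same contraction is what collapses $\bigcap_n f^n_\beta\bigl(\Theta\times(\overline{B_1(0)}\smin B_\delta(0))\bigr)$ onto $\mathcal{T}_\beta$ rather than onto a thicker annular set; your proposal establishes neither without it.
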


\begin{rem}
\alphlist
\item Note that if $\lmax(A)=0$, case (b) in the theorem is void since
  then $\beta_1=\beta_2$.
\item In the intermediate region $\beta_1<\beta<\beta_2$, as well as
  for the critical cases $\beta=\beta_1$ and $\beta=\beta_2$, a great
  variety of dynamical behaviour is possible. In particular this
  behaviour is not uniform for all orbits, and given two
  $\gamma$-invariant measures $m_1$ and $m_2$ on $\Theta$ the
  typical dynamics with respect to $m_1$ and $m_2$ may be very
  different.  Therefore, the feasible approach in this parameter
  regime is to fix a $\gamma$-invariant ergodic measure $m$ on the
  base $\Theta$ and to describe the structure of
  $\mathcal{A}_\beta(\theta)$ and other relevant properties of the
  system for $m$-almost every $\theta\in\Theta$.

  However, it turns out that for such an $m$-dependent description the
  topological structure on $\Theta$ provides no additional information
  whatsoever.  Hence, all the related questions can directly be
  addressed in the purely measure-theoretic setting of random
  dynamical systems. In our context, this means that we can apply the
  random analogue to Theorem~\ref{theorem_topological}, which is given
  by Theorem~\ref{t.hopf-random} below, to obtain further information
  about the $m$-typical behaviour. See Remark~\ref{r.random-hopf}(d)
  for details. In a similar way, further information on the critical
  parameters is provided by Proposition~\ref{p.critical} below.

\item The focus on the global attractor $\mathcal{A}_\beta$ and the
  sets $\mathcal{A}_\beta(\theta)$ in the above statement corresponds
  to the concept of pullback attractors in random dynamical systems.
  It describes the behaviour of trajectories coming from $-\infty$ in
  time.  The complementary point of view is to study forward dynamics,
  meaning the asymptotic behaviour of trajectories
  $f_\beta^n(\theta,v)$ as $n$ goes to $+\infty$. In situations (a)
  and (c) of the above theorem, information about the forward dynamics
  can be derived easily. In part (a), we have
  \begin{equation}
  \nLim f_{\beta,\theta}^n(v)\ = \ 0 \quad \textrm{for all }
  (\theta,v)\in\Theta\times \R^2 \ ,
  \end{equation}
  whereas in part (c) we have
  \begin{equation}
    \nLim d\left(f^n_{\beta}(\theta,v),\mathcal{T}_\beta\right) \ = \ 0 \quad
    \textrm{for all } (\theta,v) \in \Theta\times(\R^2\smin\{0\}) \ .
  \end{equation}
  In particular, all accumulation points of trajectories outside of
  $\Theta\times\{0\}$ are contained in $\mathcal{T}_\beta$.

  In the intermediate region $\beta_1<\beta<\beta_2$, as well as for
  the critical parameters, the situation is more intricate and some
  differences appear between forward and pullback dynamics. Again, the
  picture may depend on a $\gamma$-invariant measure in the base
  which serves as a reference. If the cocycle is hyperbolic with
  respect to this measure, a random two-point attractor appears in the
  intermediate parameter regime. This attractor also survives the
  second bifurcation. Consequently, for $\beta>\beta_2$ the forward
  dynamics do not {\em `see'} the whole `torus' $\mathcal{T}_\beta$,
  but only the two-point attractor which is embedded in
  $\mathcal{T}_\beta$. We refer to Theorem~\ref{t.hopf-random} on
  random forcing below for further details.
\item The most important property of the models in (\ref{f_beta}) is
  the fact that the fibre maps send lines passing through the origin
  to such lines again. As a consequence, the map written in polar
  coordinates becomes a double skew product (see Section~\ref{Polar}),
  a fact which will be crucial for our analysis.  Yet, the fact that
  the cocycle $A$ can be chosen arbitrarily allows for a great variety
  of dynamical behaviour when $\beta>\beta_2$. On the one hand, $A$
  could simply be a constant rotation matrix with angle $\rho$. In
  this case $\beta_1=\beta_2$ and the projective action of $A$, which
  is equivalent to the action of $f_\beta$ on $\mathcal{T}_\beta$ is
  typically minimal. On the other hand, we can choose $A$ to be a
  uniformly hyperbolic \sltr-cocycle, which leads to $\beta_1<\beta_2$
  and attractor-repeller dynamics on $\mathcal{T}_\beta$. A mixture of
  these two types occurs when $A$ has non-uniformly hyperbolic
  dynamics and the projective action is minimal (see
  \cite{bjerkloev:2005a} for examples of this type). Then the dynamics
  on $\mathcal{T}_\beta$ are minimal, and thus resemble an irrational
  rotation from the topological point of view, but they are of
  attractor-repeller type from the measurable
  point of view.  \listend
\end{rem}

As indicated by the preceding remark, the second main goal of this
article is to derive a random analogue of
Theorem~\ref{theorem_topological} in the context of random dynamical
systems. The motivation for this is two-fold. First, there is the
obvious intrinsic interest in random forcing processes, which are
modelled in a purely measure-theoretic setting. Secondly, as
mentioned above, even in the topological setting the description of
the typical dynamical behaviour at intermediate or critical
parameters depends on the choice of a reference measure on the base.
Hence, the consideration of measure-preserving driving processes is
required as well, in order to gain a better understanding of
deterministic forcing.
\medskip


In order to model {\em random forcing}, we make the following assumptions.

\alphlist
\item[\bf (R1)] $(\Theta,\mathcal{B},m,\base)$ is a measure preserving
    dynamical system, i.e. $\base:\Theta\to\Theta$ is a
  bi-measurable bijection and $m$ is an ergodic
  $\base$-invariant probability measure;
\item[\bf (R2)] $h:\R^+\to\R^+$ is $\mathcal{ C}^2$, strictly
   increasing, strictly concave, bounded and satisfies
  $h(0)=0$ and $h'(0)=1$.
\item[\bf (R3)] $A:\Theta \to \sltr$ is measurable and bounded.
  \listend

  Since in this setting there is no topological structure on $\Theta$,
  and consequently $\mathcal{A}_\beta$ has no global topological
  structure either, we concentrate on the structure of
  $\mathcal{A}_\beta$ on typical fibres.  Note that
  $\mathcal{A}_\beta$ again has the form given by
  (\ref{e.global-attractor-parametrisation}), where now
  $r_\beta:\Theta\times\R \to \R^+$ is a measurable function which is
  \halb-periodic and upper semi-continuous in the second
  variable.  This time, the bifurcation parameters are determined by
  the Lyapunov exponent of the cocycle $(\gamma,A)$ with respect to
  $m$, which is defined as
\begin{equation}
  \lambda_m(A) \ = \ \lim_{n\to\infty}\frac 1n \int_{\kreis}\log\| A_n(\theta)\| \ dm(\theta) \ .
\end{equation} Note that the limit exists by subadditivity.

Our second main result provides a description of the nonautonomous
Hopf bifurcation in this random setting, where it is also possible to
give more details on the intermediate parameter region. For the
application to the deterministic models we refer to
Remark~\ref{r.random-hopf}(d) below. In contrast to
Theorem~\ref{theorem_topological}, we now provide details on both
forward and pullback dynamics. The reason is that there are
important differences between the two viewpoints, in particular when
$\beta_1\neq\beta_2$.

\begin{thm} \label{t.hopf-random}
 Suppose $(f_\beta)_{\beta\in\R^+}$ is of the form
 (\ref{f_beta}) and satisfies conditions (R1)--(R3). Let
 \[
 \beta_1^m\ :=\ e^{-\lambda_m(A)} \quad \textrm{and} \quad \beta_2^m\ :=\
 e^{\lambda_m(A)} \ .
  \]
  Then there exists a $\gamma$-invariant set $\Theta_0\ssq \Theta$ of
 full measure, such that for all $\theta\in\Theta_0$ the following
 hold.  \alphlist
\item If $\beta< \beta^m_1$, then $\mathcal{A}_\beta(\theta)=\{0\}$
  and
  \begin{equation}
   \nLim f^n_{\beta,\theta}(v) \ = \ 0 \quad  \textrm{ for all } v \in \R^2 \ .
  \end{equation}
\item If $\beta_1^m<\beta< \beta_2^m$, then the set
  $\mathcal{A}_\beta(\theta)$ is a line segment of positive length.
  More precisely, there exist measurable functions
  $\alpha_u,\alpha_s:\Theta_0\to[0,\halb)$, not depending on $\beta$,
  such that $r_\beta(\theta,\alpha)>0$ if and only if
  $\alpha=\alpha_u(\theta)$ and we have
  \begin{equation}
    \mathcal{A}_\beta(\theta) \ = \ \left\{ rv(\alpha_u(\theta)) \mid
      |r| \leq r_\beta(\theta,\alpha_u(\theta)) \right\} \ ,
  \end{equation}
  and the graph of the set-valued function
  \begin{equation} \label{e.Psi} \Psi_\beta(\theta) \ = \ \left\{\pm
      r_\beta(\theta,\alpha_u(\theta))v(\alpha_u(\theta)) \right\}
\end{equation}
is a random two-point forward attractor with domain of attraction
 \begin{equation}\nonumber
  \mathcal{D} \ = \  \{(\theta,v) \mid \theta\in\Theta_0, v\in \R^2
  \smin(\R v(\alpha_s(\theta)) \} \ ,
\end{equation}
in the sense that
 \begin{equation}\label{e.random_attractor}
   \lim_{n\to\infty} d\left(f^n_{\beta,\theta}(v),\Psi_\beta(\gamma^n\theta)\right)
   \ = \ 0 \
  \end{equation}
  for all $(\theta,\alpha)\in\mathcal{D}$.
\item If $\beta > \beta_2^m$, then the map $\alpha\mapsto
  r_\beta(\theta,\alpha)$ is strictly positive and continuous. The set
  $\mathcal{T}_\beta$ defined by
  \begin{equation}
    \mathcal{T}_\beta(\theta)\ = \ \partial \mathcal{A}_\beta(\theta)\ = \
    \left\{ \left. r_\beta(\theta,\alpha)v(\alpha) \
      \right| \alpha\in[0,1) \right\}
\end{equation}
is the global pullback attractor outside $\Theta\times \{0\}$. More
precisely, for all $\delta>0$ there exists an $f_\beta$-forward
invariant random compact set $\mathcal{K}_{\beta,\delta}$ which contains
$\Theta_0\times\left(\overline{B_1(0)}\smin
  B_\delta(0)\right)$ and satisfies
\[
\mathcal{T}_\beta(\theta) \ = \ \ncap
f^n_{\beta,\base^{-n}\theta}\left(\mathcal{K}_{\beta,\delta}(\gamma^{-n}\theta)\right) \ .
\]
When $\lambda_m(A)>0$, the random forward attractor $\Psi_\beta$ given
by (\ref{e.Psi}) still exists, with
$\Psi_\beta(\theta)\ssq\mathcal{T}_\beta(\theta)$, and
(\ref{e.random_attractor}) remains true.  \listend
  \end{thm}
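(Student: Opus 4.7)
The plan is to pass to polar coordinates, where the specific form of $f_\beta$ becomes the double skew product
\[
(\theta,\alpha,r)\ \mapsto\ \bigl(\gamma\theta,\ T(\theta,\alpha),\ h(\beta r)\,g(\theta,\alpha)\bigr),
\]
with $T(\theta,\cdot)$ the projective action of $A(\theta)$ on the half-circle and $g(\theta,\alpha) = \|A(\theta)v(\alpha)\|$. Along any prescribed projective orbit the radial map is strictly concave, strictly increasing and bounded, with derivative $\beta g(\theta,\alpha)$ at the origin, so the linearised multiplier along $n$ iterates equals $\beta^n \|A_n(\theta)v(\alpha)\|$. Strict concavity of $h$ with $h'(0)=1$ forces $h(x) \leq x$, and hence the forward radial growth is always dominated by this linear cocycle.

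Part (a) then follows from the Oseledets multiplicative ergodic theorem: on a $\gamma$-invariant full-measure set $\Theta_0$ one has $\limsup_n \tfrac{1}{n}\log\|A_n(\theta)v\| \leq \lambda_m(A)$ for every $v\neq 0$. Combined with $\|f^n_{\beta,\theta}(v)\| \leq \beta^n\|A_n(\theta)v\|$ this forces exponential decay when $\log\beta + \lambda_m(A)<0$, and the same estimate over $\gamma^{-n}\theta$ gives $\mathcal{A}_\beta(\theta) = \{0\}$. When $\lambda_m(A) > 0$, Oseledets further supplies the measurable splitting $\R^2 = E^u(\theta) \oplus E^s(\theta)$ with exponents $\pm\lambda_m(A)$, and projectively defines $\alpha_u, \alpha_s : \Theta_0 \to [0,\halb)$. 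The key identity for pullback analysis is that the cumulative radial multiplier along the $n$ preceding steps satisfies
\[
\prod_{k=0}^{n-1} g\bigl(\gamma^{-n+k}\theta,\alpha_{-n+k}\bigr)\ =\ \bigl\|A_n(\gamma^{-n}\theta)v(\alpha_{-n})\bigr\|\ =\ \bigl\|A_n(\gamma^{-n}\theta)^{-1}v(\alpha)\bigr\|^{-1},
\]
where $\alpha_{-n}$ is the preimage of $\alpha$ under $T_n(\gamma^{-n}\theta,\cdot)$. Applying Oseledets to the inverse cocycle, this multiplier grows like $e^{n\lambda_m(A)}$ when $\alpha = \alpha_u(\theta)$ and like $e^{-n\lambda_m(A)}$ for $m$-a.e.\ other $\alpha$.

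With this linearised data in hand, a one-dimensional sub-argument completes (b). The radial map along a prescribed orbit is a strictly concave, monotone, bounded non-autonomous map, which obeys the standard dichotomy: if the asymptotic geometric mean of the multiplier at the origin exceeds one, the pullback trajectory of any positive initial radius converges to a unique strictly positive limit which is also a forward attractor; otherwise every trajectory decays to zero. This yields $r_\beta(\theta,\alpha)>0$ precisely for $\alpha=\alpha_u(\theta)$, and the random two-point forward attractor $\Psi_\beta$ arises from the two possible signs. The identification of the domain of attraction $\mathcal{D}$ is immediate: the forward projective orbit of a vector $v$ fails to accumulate on $\alpha_u$ exactly when $v\in\R v(\alpha_s(\theta))$.

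The anticipated main obstacle lies in the uniform claims of part (c), namely that $r_\beta(\theta,\alpha)>0$ and is continuous in $\alpha$ for \emph{every} $\alpha$, not only $m$-a.e. In the topological setting this was a consequence of compactness, but in the purely measurable setting one needs a \emph{semiuniform} lower bound $\liminf_n \tfrac{1}{n}\log \bigl\|A_n(\gamma^{-n}\theta)^{-1}v(\alpha)\bigr\|^{-1} \geq -\lambda_m(A)$ uniformly in $\alpha$, to be supplied by the semiuniform ergodic theorem for random dynamical systems announced in the abstract. Given this bound, the strict inequality $\log\beta - \lambda_m(A) > 0$ translates through the monotonicity and uniform concavity of $h$ into a strictly positive lower bound on the pullback concave fixed point, uniform in $\alpha$; continuity of $r_\beta(\theta,\cdot)$ then follows from uniform convergence of the pullback iterates, the forward-invariant random compact absorbing set $\mathcal{K}_{\beta,\delta}$ is obtained by monotone pull-back of $\{r\geq\delta\}$, and persistence of $\Psi_\beta$ when $\lambda_m(A)>0$ reduces to the one-dimensional analysis of (b) along the unstable orbit.
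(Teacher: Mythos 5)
Your overall strategy coincides with the paper's: pass to polar coordinates to obtain the double skew product, use Oseledets/Kingman for part (a) and for identifying $\alpha_u,\alpha_s$, and exploit concavity of the radial fibre maps. Part (a) as you sketch it is essentially the paper's Lemma~\ref{l.random-attraction}. However, there is a genuine gap in part (b). The ``standard dichotomy'' for a strictly concave, monotone, bounded non-autonomous interval map applies to the radial dynamics \emph{along a single prescribed projective orbit}. The forward attraction claim \eqref{e.random_attractor} concerns an arbitrary $v\notin\R v(\alpha_s(\theta))$, whose projective coordinate $\alpha_n$ converges to $\phi_u(\gamma^n\theta)$ exponentially fast but is never equal to it; its radial coordinate therefore evolves under a \emph{different} sequence of concave maps (with multipliers $\Omega(\theta_n,\alpha_n)$ rather than $\Omega(\theta_n,\phi_u(\theta_n))$) than the orbit sitting on the attractor. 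Your one-dimensional dichotomy only yields attraction for initial conditions with $v\in E^u(\theta)$ exactly. Closing this requires a genuine two-orbit comparison: the paper's Forward Comparison Lemma~\ref{lemma:main-comparison} tracks the ratio $q_n=r_n'/r_n$ and uses the strict-concavity gain $\Gamma(a,b)>1$ for $b<1$ (Lemma~\ref{l.gammaab}) to show that if $q_n$ stayed bounded away from $1$ the accumulated product $\prod_j\Gamma(\beta r_j,q_j)$ would diverge, a contradiction. Nothing in your proposal substitutes for this mechanism, and it is the technical heart of (b).

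Two further points. First, the theorem asserts a \emph{single} $\gamma$-invariant full-measure set $\Theta_0$ valid for the uncountable family of all $\beta$ simultaneously (cf.\ Remark~\ref{r.random-hopf}(e)); your argument produces a null set for each fixed $\beta$, and you do not indicate how to remove the $\beta$-dependence (the paper uses monotonicity of $\psi_\beta^+$ in $\beta$, countable parameter sequences, and in part (c) an extended system with $\beta$ as an extra coordinate). Second, in part (c) the semiuniform bound you propose concerns the \emph{linear} cocycle at $r=0$; this controls repulsion from the zero section (detachment), but the continuity of $r_\beta(\theta,\cdot)$ requires uniform \emph{contraction} of the nonlinear radial maps on the absorbing set $K_{\beta,\delta}$ away from zero. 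That contraction is not a consequence of your linear estimate: it comes from classifying the invariant measures on $K_{\beta,\delta}$ (Theorem~\ref{t.measures}), showing via concavity (Theorem~\ref{t.convexity}) that every invariant graph there has negative fibre exponent, and only then applying the random semiuniform ergodic theorem to $\log(F^n)'$. As written, your argument for continuity of $r_\beta(\theta,\cdot)$ does not go through.
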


  \begin{rem} \label{r.random-hopf}
    \alphlist
    \item As before, case (b) of the theorem is void if $\beta_1^m=\beta_2^m$.
    \item Note that for $\beta_1^m<\beta<\beta_2^m$, the attractor
      $\Psi_\beta$ given by (\ref{e.Psi}) consists exactly of the endpoints
      of the segment $\mathcal{A}_\beta(\theta)$ on each fibre.
    \item If $\beta_1^m<\beta_2^m$, then the statements on
      $\Psi_\beta$ can be interpreted in the way that this attractor
      persists throughout the whole parameter range (if
      $\beta<\beta_1^m$ it coincides with $\Theta\times \{0\}$ by
      definition) and attracts almost all initial conditions with
      respect to $m$ and the Lebesgue measure on $\R^2$.
    \item When $\base$ is a homeomorphism of a compact metric space
      $\Theta$ as in Theorem~\ref{theorem_topological}, we denote by
      $\mathcal{ M}(\base)$ the set of $\base$-invariant ergodic
      probability measures on $\Theta$. As mentioned, we can apply
      Theorem~\ref{t.hopf-random} and Proposition~\ref{p.critical}
      below for any fixed reference measure $m\in\mathcal{M}(\base)$
      on the base.  As a straightforward consequence of the semiuniform
      sub-multiplicative ergodic theorem (see Theorem~\ref{Semi-uniform
        ergodic}), we have
  \begin{equation}\label{eq:lmax-sup}
  \lmax(A) \ = \ \sup_{m\in\mathcal{M}(\base)} \lambda_m(A) \ .
\end{equation}
Therefore $\beta_1\leq\beta_1^m\leq\beta_2^m\leq\beta_2$.
However, due to compactness of $\mathcal{M}(\base)$, there always
exists at least one $\hat m\in\mathcal{M}(\base)$ with $\lambda_{\hat
  m}(A)=\lmax(A)$ and thus $\beta_1^{\hat m}=\beta_1$ and
$\beta_2^{\hat m}=\beta_2$. When $\beta_1<\beta<\beta_2$, then this
means in particular that $\hat m$-typical fibres are line segments of
positive length and the typical dynamics with respect to $\hat m$ are
governed by a two-point attractor $\Psi_\beta$ given by
(\ref{e.Psi}).  Theorem~\ref{theorem_topological}(b) is a direct
consequence of this.
\item Note that the full measure set $\Theta_0\ssq\Theta$ in the above
  statement is fixed and does not depend on the parameter $\beta$.
  Obtaining this $\beta$-independence will require some additional
  work, but since the parameter set is uncountable this is
  clearly stronger than just showing that all statements hold
  $m$-a.s.\ for all parameters $\beta$, but allowing the exceptional
  set to change with $\beta$. \listend
  \end{rem}

  For the three non-critical parameter regions described above, the
  picture provided by Theorem~\ref{t.hopf-random} can be considered
  rather complete. In contrast to this, the two critical parameters
  $\beta_1^m$ and $\beta_2^m$ are more difficult to treat, and there
  are some questions which we have to leave open here (see
  Questions~\ref{q.critical}). Nevertheless, the following proposition
  provides at least some information, both on pullback and forward
  dynamics.

  \begin{prop} \label{p.critical} Under the assumptions of
    Theorem~\ref{t.hopf-random}, the set $\Theta_0$ can be chosen such
    that for all $\theta\in\Theta_0$ the following hold.
\alphlist
\item If $\beta=\beta^m_1<\beta^m_2$, then $\mathcal{A}_\beta(\theta)=\{0\}$ and
  there exists a set $J=J(\theta)\ssq\N$ of asymptotic density $0$ such that
  \begin{equation}
        \lim_{\substack{n\to\infty \\ n\notin J(\theta)}} \|f^n_{\beta,\theta}(v)\| \ = \ 0
    \quad \textrm{for all } v\in\R^2 \ .
  \end{equation}
\item If $\beta=\beta_1^m=\beta_2^m$, then $\mathcal{A}_\beta(\theta)$ is
  not a topological disk. More precisely, there exists
  $\alpha=\alpha(\theta)$ such that
  $r_\beta(\theta,\alpha(\theta))=0$. Further,
  \begin{equation}
    \nLim \ntel \inergsum \|f^i_{\beta,\theta}(v)\| \ = \ 0 \quad \text{for all } v\in\R^2\ .
  \end{equation}
\item If $\beta_1^m<\beta=\beta_2^m$, then the statement of
  Theorem~\ref{t.hopf-random}(b) holds without any modifications.
\listend
  \end{prop}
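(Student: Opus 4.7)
The plan is to work throughout in polar coordinates $v=rv(\alpha)$, since then the angular evolution reduces to the $\beta$-independent projective action of $A$ and the radial part satisfies $r_{n+1}=h(\beta r_n)\|A(\gamma^n\theta)v(\alpha_n)\|$. Setting $\psi(x):=-\log(h(x)/x)$ with $\psi(0):=0$, the hypotheses on $h$ make $\psi$ continuous, strictly increasing on $\R^+$, and strictly positive on $(0,\infty)$. Iterating the log of the radial recursion yields the central identity
\begin{equation}\label{e.logr-plan}
\log r_n \;=\; \log r_0 + n\log\beta + \log\|A_n(\theta)v(\alpha_0)\| - \sum_{k=0}^{n-1}\psi(\beta r_k).
\end{equation}
I enlarge the set $\Theta_0$ of Theorem \ref{t.hopf-random} to a full-measure $\gamma$-invariant set on which the Oseledets/Kingman conclusions hold pointwise: $\tfrac{1}{n}\log\|A_n(\theta)\|\to\lambda_m$, and, when $\lambda_m>0$, the splitting $\R^2=E^u(\theta)\oplus E^s(\theta)$ is defined with $\tfrac1n\log\|A_n(\theta)v\|\to\lambda_m$ for every $v\notin E^s(\theta)$.

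The convergence assertions in (a) and (b) I plan to handle by a single density argument. Fix $\delta>0$ and $v\neq 0$ (with $v\notin\R v(\alpha_s(\theta))$ in case $\lambda_m>0$), write $r_n=\|f^n_{\beta,\theta}(v)\|$ and $S_\delta=\{n:r_n\geq\delta\}$. Assuming for contradiction that $S_\delta$ has positive upper density $\bar d$, a maximal-element device (take $N_k$ to be the largest element of $S_\delta\cap[0,N]$ for a sequence $N\to\infty$) produces $N_k\in S_\delta$ with $|S_\delta\cap[0,N_k]|/N_k\geq\bar d/2$. Since $\psi$ is nonnegative and strictly increasing, $\sum_{k<N_k}\psi(\beta r_k)\geq(\bar d/2)N_k\psi(\beta\delta)$. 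Substituting into \eqref{e.logr-plan} together with $n\log\beta+\log\|A_n(\theta)v(\alpha_0)\|=o(n)$---which reads $-n\lambda_m+n\lambda_m+o(n)$ in (a) and $0+o(n)$ in (b), the latter using that both Oseledets exponents vanish when $\lambda_m=0$---I obtain $\log r_{N_k}\leq-cN_k$ for some $c=c(\bar d,\delta)>0$, contradicting $r_{N_k}\geq\delta$. Hence $S_\delta$ has density $0$ for every $\delta$, and $J(\theta):=\bigcup_\ell S_{1/\ell}(\theta)$ works in (a); the case $v\in\R v(\alpha_s(\theta))$ is handled directly by $r_n\leq\beta^n\|A_n(\theta)v\|$ (from $h(x)\leq x$), giving exponential decay. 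Density convergence combined with $r_k\leq 1$ yields the Cesaro statement of (b).

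The attractor-structure claims require the pullback analogue. For $\mathcal{A}_\beta(\theta)=\{0\}$ in (a), I would assume $v\in\mathcal{A}_\beta(\theta)\smin\{0\}$ and, using the $f_\beta$-invariance of $\mathcal{A}_\beta$, extract pullback representatives $v^{(n)}\in\mathcal{A}_\beta(\gamma^{-n}\theta)\subseteq\overline{B_1(0)}$ with $f^n_{\beta,\gamma^{-n}\theta}(v^{(n)})=v$. Applied to these trajectories, \eqref{e.logr-plan} gives $\sum\psi(\beta r_k^{(n)})=o(n)$, and the same density argument as before, now run along the pullback and using angular convergence to $\alpha_u$ to control $\log\|A_n(\gamma^{-n}\theta)v(\alpha_0^{(n)})\|$, forces $\|v\|=0$. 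For the non-disk assertion in (b), I would argue that if $r_\beta(\theta,\cdot)\geq\rho>0$ everywhere, then every pullback orbit ending on $\partial\mathcal{A}_\beta(\theta)$ stays within $\partial\mathcal{A}_\beta$ and thus has all radii $\geq\rho$; plugging this into \eqref{e.logr-plan} with $\log\beta=0$ and $\log\|A_n(\gamma^{-n}\theta)v(\alpha_0^{(n)})\|=o(n)$ yields $\log r_n^{(n)}\leq-n\psi(\beta\rho)+o(n)\to-\infty$, contradicting $r_n^{(n)}\geq\rho$. Upper semi-continuity of $r_\beta(\theta,\cdot)$ then produces the required angle $\alpha(\theta)$ with $r_\beta(\theta,\alpha(\theta))=0$.

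For part (c), at $\beta=\beta_2^m=e^{\lambda_m}$ the unstable-direction linearized radial rate $\log\beta+\lambda_m=2\lambda_m$ is identical to its value in the intermediate regime, so the construction of the forward two-point attractor $\Psi_\beta$ and of the line-segment pullback attractor from the proof of Theorem \ref{t.hopf-random}(b) carries through verbatim. In the stable direction the linearized rate $\log\beta-\lambda_m=0$ is marginal, but $-\psi(\beta r)<0$ for $r>0$ supplies the missing strict contraction via the density mechanism of (a)/(b), ensuring the stable-direction components of arbitrary orbits collapse and reproducing the full conclusion. The main obstacle across the whole proposition is the attractor-structure step of (b): the MET delivers no exponential rate when $\lambda_m=0$, so the disk contradiction must be driven purely by the strict positivity of $\psi$ along pullback trajectories; ensuring that the pullback radii really stay bounded below by $\rho$ long enough to extract a linear lower bound on $\sum\psi$ requires careful bookkeeping of the pullback angular dynamics, and is where the delicate work of the proof is concentrated.
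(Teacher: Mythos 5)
Your central identity $\log r_n=\log r_0+n\log\beta+\log\|A_n(\theta)v(\alpha_0)\|-\sum_{k=0}^{n-1}\psi(\beta r_k)$ with $\psi(x)=-\log(h(x)/x)>0$ is correct, and extracting contraction from the strict positivity of $\psi$ is close in spirit to the mechanism the paper itself sketches in the remark following Lemma~\ref{l.ds-attractingtorus}. Your forward density argument does prove, for each \emph{fixed} $v$, that $\{n:\|f^n_{\beta,\theta}(v)\|\geq\delta\}$ has density zero, and this suffices for the Ces\`aro limit in (b). It does \emph{not} suffice for (a): the proposition demands one exceptional set $J(\theta)$ valid for \emph{all} $v$ simultaneously, whereas your $S_\delta$, and the rate of the $o(n)$ term $n\log\beta+\log\|A_n(\theta)v(\alpha_0)\|$, depend on the orbit. (Also, $\bigcup_\ell S_{1/\ell}$ need not have density zero; one needs the usual diagonal truncation.) The paper closes exactly this gap with the forward comparison Lemma~\ref{lemma:main-comparison}: it first produces a density-one sequence along which the single reference orbit $F^n_{\beta,\theta,\phi_u(\theta)}(1)$ tends to $0$, and then shows $F^n_{\beta,\theta,\alpha}(r)\leq C(\theta,\alpha,r)\,F^n_{\beta,\theta,\phi_u(\theta)}(1)$ for every other admissible initial condition, so that the same $J(\theta)$ works for all $v$. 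Some comparison step of this kind is indispensable in your scheme as well.

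The structural claims are where the proposal genuinely breaks down. In (b), your contradiction hypothesis that $r_\beta(\theta,\cdot)\geq\rho$ everywhere controls only the fibre over $\theta$, while the pullback orbit on $\graph(\psi^+_\beta)$ passes through $\gamma^{-n}\theta,\dots,\theta$; to bound $\sum_k\psi(\beta r_k)$ below linearly you would need the bound $\rho$ along the whole backward orbit, which you have not arranged. Worse, even granting $\inf_\alpha r_\beta(\theta,\alpha)=0$, upper semi-continuity does \emph{not} imply the infimum is attained (that would require lower semi-continuity), so the asserted angle $\alpha(\theta)$ with $r_\beta(\theta,\alpha(\theta))=0$ does not follow from your argument. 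The paper obtains the zero of $\psi^+_\beta$ measure-theoretically: $\lambda_\mu(F_\beta,0)\leq 0$ for every $\mu\in\cM_m(g)$ forces $\psi^+_\beta=0$ $\mu$-a.s.\ by Theorem~\ref{t.convexity}, and disintegrating any such $\mu$ over $m$ yields, for a.e.\ $\theta$, an $\alpha$ where $\psi^+_\beta(\theta,\alpha)=0$. Finally, in (c) the intermediate-regime proof does \emph{not} carry over verbatim: Lemma~\ref{l.intermediate_upperbounding} kills $\psi^+_\beta$ off $\phi_u$ via the bound $\psi^+_\beta\leq\lim_n(\beta e^{-\lambda_m(A)})^n$, which is useless at $\beta=\beta_2^m$ where $\beta e^{-\lambda_m(A)}=1$; the paper substitutes a pullback comparison against the $\phi_s$-orbit, whose limit vanishes by Theorem~\ref{t.convexity}. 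Your appeal to ``the density mechanism'' at this marginal rate is the right instinct but is left entirely unexecuted, and it is precisely where the work lies.
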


  \begin{questions} \label{q.critical}
    \alphlist
  \item In the situation of Theorem~\ref{theorem_topological}, does
    $\mathcal{A}_\beta=\Theta\times\{0\}$ still hold if
    $\beta=\beta_1$? If not, is this always true when
    $\gamma$ is uniquely ergodic?
  \item If the answer to (a) is negative, is it at least true that for
    $\gamma$ uniquely ergodic and $\beta=\beta_1$ we have
    $\mathcal{A}_\beta(\theta)=0$ $m$-a.s.\ and $\nLim\|
    f_{\beta,\theta}^n(v)\| = 0$ for $m$-a.e.\ $\theta$ and all
    $v\in\R^2$?
  \item In the situation of Proposition~\ref{p.critical}(a), is it
    true that $\nLim \|f^n_{\beta,\theta}(v)\|=0$ for $m$-a.e.\
    $\theta\in\Theta$ and all $v\in\R^2$? In other words, does
    Proposition~\ref{p.critical}(a) hold with $J(\theta)=\emptyset$?
  \item In the situation of Proposition~\ref{p.critical}(b), is it
    true that $\mathcal{A}_\beta(\theta)=\{0\}$ $m$-a.s.\ and $\nLim\|
    f_{\beta,\theta}^n(v)\| = 0$ for $m$-a.e.\  $\theta$ and all
    $v\in\R^2$?
\listend
  \end{questions}

  The paper is organised as follows. Section~\ref{Preliminaries}
  provides some basic notation and preliminary results on skew product
  systems with one-dimensional fibres. In Section~\ref{top.doubleskew}
  we introduce a change of coordinates which transforms our system
  into a double skew product. This observation will be crucial for the
  further analysis. The proof of Theorem~\ref{theorem_topological} on
  deterministic forcing is given in Section~\ref{top.setting}, whereas
  Section~\ref{random.setting} deals with the random setting and
  contains the proofs of Theorem~\ref{t.hopf-random} and
  Proposition~\ref{p.critical}. We close with some remarks concerning
  continuous-time systems generated by non-autonomous planar vector fields in
  Section~\ref{CtsModels} and an explicit example illustrated by some
  simulations in Section~\ref{Examples}. \medskip

  \noindent \textbf{Acknowledgements.} V.~Anagnostopoulou and
  T.~J\"ager were supported by the German Research Council
  (Emmy-Noether-Project Ja 1721/2-1), G.~Keller was supported by the
  German Research Council (DFG-grant Ke 514/8-1).

\section{Notation and preliminaries} \label{Preliminaries}

Given a measure-preserving dynamical system {\em (mpds)}
$(\Theta,\mathcal{B},m,\base)$ in the sense of Arnold
\cite{arnold:1998} and a Polish space $M$, we say $f:\Theta\times M\to \Theta\times M$ is a
{\em continuous random map with base $\base$} if it is a measurable
skew product map
\begin{equation}
  \label{e.w-forced} f:\Theta \times M \to \Theta \times M \quad , \quad  (\theta,x)
  \ \mapsto \ (\base\theta,f_\theta(x)) \
\end{equation}
and $x\mapsto f_\theta(x)$ is continuous for all $\theta\in\Theta$.
Note that we write $\gamma\theta$ instead of $\gamma(\theta)$. The
maps $f_\theta :X \to X$ are called {\em fibre maps}.  By
$f^n_\theta=(f^n)_\theta=f_{\gamma^{n-1}\theta}\circ \ldots f_\theta$
we denote the fibre maps of the iterates of $f$ (and not the iterates
of the fibre maps), that is $f^n_\theta(x) = \pi_2\circ
f^n(\theta,x)$. Here $\pi_2:\Theta\times M\to M$ is the projection to
the second coordinate. When $\Theta$ is a metric space and $\base$ is
continuous, such that $f$ is a continuous skew product map, we also
call $f$ a {\em $\base$-forced map}.  When $M$ is a smooth manifold
and all fibre maps $f_\theta$ are $\mathcal{C}^r$, we call $f$ a {\em
  random or $\base$-forced $\mathcal{C}^r$-map}. When $M$ is a real
interval, we say $f$ is a {\em random or $\base$-forced
  $\mathcal{C}^r$-interval map}. If all fibre maps are in addition
(strictly) increasing, we say $f$ is a random of
$\base$-forced {\em monotone} $\mathcal{C}^r$-interval map.

In the context of random maps, fixed points of unperturbed maps are
replaced by {\em invariant graphs}. If $m$ is a $\base$-invariant
measure, then we call a measurable function $\varphi:\Theta\to M$ an
{\em $(f,m)$-invariant graph} if it satisfies
\begin{equation}
  \label{e.inv-graph}
  f_\theta(\varphi(\theta)) \ = \ \varphi(\base\theta) \quad \textrm{for }
  m\textrm{-a.e. } \theta\in\Theta \ .
\end{equation}
When (\ref{e.inv-graph}) holds for all $\theta\in\Theta$, we say
$\varphi$ is an {\em $f$-invariant graph}. However, this notion
usually only makes sense if $\Theta$ is a topological space and
$\varphi$ has some topological property, like continuity or at least
semi-continuity. Note that any $f$-invariant graph is an
$(f,m)$-invariant graph for all $\base$-invariant measures $m$.
Usually, we will only require that $(f,m)$-invariant graphs are
defined $m$-almost surely, which means that implicitly we always speak
of equivalence classes.  Conversely, $f$-invariant graphs are defined
everywhere, and in this case we write
$\graph(\varphi)=\{(\theta,\varphi(\theta))\mid \theta\in\Theta\}$.

The {\em (vertical) Lyapunov exponent} of an $(f,m)$-invariant graph
$\varphi$ is given by
\begin{equation}
  \label{e.lyap}
  \lambda_m(\varphi) \ = \ \int_\Theta \log f'_\theta(\varphi(\theta)) \ d m(\theta) \ .
\end{equation}
In some cases, we will also write $\lambda_m(f,\varphi)$, in order to
avoid ambiguities. Apart from the analogy to fixed points of
unperturbed maps, an important reason for concentrating on invariant
graphs is the fact that there is a one-to-one correspondence between
invariant graphs and invariant ergodic measures of forced monotone
interval maps. If $m$ is a $\base$-invariant ergodic measure and
$\varphi$ is an $(f,m)$-invariant graph, then an $f$-invariant ergodic
measure $m_\varphi$ can be defined by
\begin{equation}
  \label{e.graph-measure}
  m_\varphi(A) \ = \ m\left(\left\{\theta\in\Theta\mid (\theta,\varphi(\theta)) \in A
    \right\}\right) \ .
\end{equation}
Conversely, we have the following.
\begin{thm}[Theorem 1.8.4 in \cite{arnold:1998}] \label{t.measures}
  Suppose $(\Theta,{\cal B},m,\base)$ is an ergodic {\em mpds} and $f$
  is a random monotone $\mathcal{C}^0$-interval map with base $\base$.
  Further, assume that $\mu$ is an $f$-invariant ergodic measure which
  projects to $m$ in the first coordinate. Then $\mu=m_\varphi$ for
  some $(f,m)$-invariant graph $\varphi$.
\end{thm}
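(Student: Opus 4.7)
The plan is to realise the $f$-invariant ergodic measure $\mu$ as being supported on the graph of a measurable section $\varphi$, obtained as a quantile of the disintegration of $\mu$ over $m$. Ergodicity of $\mu$ will then force the fibre measures to be Dirac masses, yielding $\mu=m_\varphi$.

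First, since $\mu$ projects to $m$ and the underlying spaces are Polish/standard, the disintegration theorem gives conditional probabilities $(\mu_\theta)_{\theta\in\Theta}$ on $M$ with $\mu=\int_\Theta \mu_\theta\,dm(\theta)$. The $f$-invariance of $\mu$ together with the skew product structure translates into $(f_\theta)_*\mu_\theta=\mu_{\base\theta}$ for $m$-a.e.\ $\theta$; this is the standard fact that invariance of a skew product measure is equivalent to the disintegrations being transported correctly by the fibre maps.

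Next I would exploit strict monotonicity. Let $F_\theta(x)=\mu_\theta((-\infty,x])$ and define
\[
\varphi(\theta)\ :=\ \inf\{x\in M : F_\theta(x)\geq \tfrac12\},
\]
the lower median of $\mu_\theta$ (well-defined $m$-a.s.\ on the interval $M$, and measurable in $\theta$ by joint measurability of $F$). Since $f_\theta$ is strictly increasing and continuous, $f_\theta^{-1}((-\infty,y])=(-\infty,f_\theta^{-1}(y)]$, so $(f_\theta)_*\mu_\theta=\mu_{\base\theta}$ yields $F_{\base\theta}(f_\theta(x))=F_\theta(x)$. Applying $f_\theta$ to the defining infimum (using strict monotonicity and continuity to swap $\inf$ with $f_\theta$) gives $f_\theta(\varphi(\theta))=\varphi(\base\theta)$ for $m$-a.e.\ $\theta$, so $\varphi$ is an $(f,m)$-invariant graph.

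The final step is to show $\mu_\theta=\delta_{\varphi(\theta)}$ $m$-a.s. Consider the sets
\[
A^-\ :=\ \{(\theta,x) : x<\varphi(\theta)\},\qquad
A^+\ :=\ \{(\theta,x) : x>\varphi(\theta)\}.
\]
Both are measurable, and $f$-invariance of $\varphi$ together with strict monotonicity of the fibre maps shows that $A^-$ and $A^+$ are $f$-invariant (mod $m$). By the definition of $\varphi$ as a lower median, $\mu(A^-)=\int F_\theta(\varphi(\theta)^-)\,dm(\theta)\leq\tfrac12$ and $\mu(A^+)=\int (1-F_\theta(\varphi(\theta)))\,dm(\theta)\leq\tfrac12$. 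Ergodicity of $\mu$ then forces $\mu(A^-)=\mu(A^+)=0$, hence $\mu_\theta(\{\varphi(\theta)\})=1$ for $m$-a.e.\ $\theta$, which means $\mu=m_\varphi$ as desired.

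The main technical subtlety is the commutation of $f_\theta$ with the infimum defining the median; this relies crucially on $f_\theta$ being strictly increasing and continuous (so that preimages of half-lines are half-lines and the infimum is attained in the limit). The rest of the argument — disintegration, measurability of the median, and the ergodic dichotomy — is routine.
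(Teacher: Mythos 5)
Your argument is correct. The paper does not actually prove this statement --- it is quoted as Theorem~1.8.4 of Arnold's book --- and your disintegration-plus-median argument is essentially the standard proof: the fibrewise distribution functions $F_\theta$ are transported by the strictly increasing fibre maps, so the median defines an $(f,m)$-invariant graph, and ergodicity of $\mu$ applied to the two forward-invariant sets strictly above and strictly below that graph (each of $\mu$-measure at most $\tfrac12$, hence of measure $0$) forces $\mu_\theta=\delta_{\varphi(\theta)}$ for $m$-a.e.\ $\theta$.
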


Note that any probability measure $\mu$ on $\Theta\times M$ that projects to $m$
can be disintegrated into a family of probability measures
$(\mu_\theta)_{\theta\in\Theta}$ on the fibres, in the sense that
$\int_{\Theta\times M} \Phi \ d\mu =\int_\Theta \int_M \Phi(\theta,x) \
d\mu_\theta(x)dm(\theta)$ for all measurable functions $\Phi:\Theta\times M\to
\R$ \cite[Proposition 1.4.3]{arnold:1998}. Let $\delta_x$ denote the Dirac measure in the point $x$. Then, if
$\mu=m_\varphi$ we obtain $\mu_\theta=\delta_{\varphi(\theta)}$. Consequently,
an ergodic measure associated to an invariant graph can also be called a random
Dirac measure.  Invariant measures associated to $n$-valued invariant graphs are
called random $n$-point measures. Theorem~\ref{t.measures} can then be rephrased
by saying that all ergodic measures of random monotone interval maps are random
Dirac measures.

When the fibre maps of a random monotone $\mathcal{C}^2$-interval map
are all concave, the following result allows to control the number of
invariant graphs and their Lyapunov exponents.

\begin{thm}[\cite{keller:1996}] \label{t.convexity} Suppose
  $(\Theta,{\cal B},m ,\base)$ is a {\em mpds} and $f$ is a
  $\base$-forced monotone ${\cal C}^2$-interval map whose fibre maps
  are all strictly concave. Further, assume that the function
  $\eta(\theta) = \inf_{x\in I(\theta)} \log f'_\theta(x)$ has an
  integrable minorant.  Then there exist at most two $(f,m)$-invariant
  graphs, and if there exist two distinct $(f,m)$-invariant graphs
  $\psi^-\leq \psi^+$ then $\lambda_m(\psi^-)>0$ and
  $\lambda_m(\psi^+)<0$.
\end{thm}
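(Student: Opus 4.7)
The strategy is to use strict concavity of the fibre maps to control the ratio $D(\gamma\theta)/D(\theta)$ of successive values of the gap $D=\psi^+-\psi^-$ between two ordered invariant graphs, and then to extract, via Birkhoff's theorem, the opposite signs of their Lyapunov exponents; the ``at most two'' claim drops out immediately once this is in place. The first step is a reduction to the ordered case: if $\psi_1,\psi_2$ are $(f,m)$-invariant graphs, strict monotonicity of the fibre maps makes the set $\{\theta:\psi_1(\theta)<\psi_2(\theta)\}$ $\gamma$-invariant and hence of measure $0$ or $1$ by ergodicity. Thus any two distinct graphs can be ordered $m$-a.s.\ as $\psi^-<\psi^+$, and three as $\psi^-<\psi^0<\psi^+$.

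\emph{Key inequality and Birkhoff step.} For an ordered pair $\psi^-<\psi^+$, set $D(\theta):=\psi^+(\theta)-\psi^-(\theta)>0$ $m$-a.s. The tangent-line characterisation of strict concavity of $f_\theta$, applied at $\psi^\pm(\theta)$ and evaluated at the complementary graph, yields the two-sided estimate
\[
f_\theta'(\psi^+(\theta))\cdot D(\theta)\;<\;D(\gamma\theta)\;<\;f_\theta'(\psi^-(\theta))\cdot D(\theta)\qquad m\text{-a.s.},
\]
where invariance $f_\theta(\psi^\pm(\theta))=\psi^\pm(\gamma\theta)$ is used to identify the chord $D(\gamma\theta)/D(\theta)$. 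Taking logarithms and telescoping along an orbit gives
\[
\sum_{i=0}^{n-1}\log f'_{\gamma^i\theta}(\psi^+(\gamma^i\theta))\;<\;\log D(\gamma^n\theta)-\log D(\theta)\;<\;\sum_{i=0}^{n-1}\log f'_{\gamma^i\theta}(\psi^-(\gamma^i\theta))
\]
for every $n\geq1$. The integrable-minorant hypothesis on $\eta$ makes the negative parts of $\log f_\theta'(\psi^\pm(\theta))$ dominated by $\eta^-\in L^1(m)$, so $\lambda_m(\psi^\pm)$ are well-defined in $(-\infty,+\infty]$ and the Birkhoff averages on both sides converge $m$-a.s.\ to $\lambda_m(\psi^\pm)$ (handling the value $+\infty$ by monotone convergence applied to truncations of $(\log f_\theta'(\psi^\pm(\theta)))^+$).

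\emph{Conclusion and main obstacle.} Because $D$ is bounded above by the length of the fibre interval, $\log D$ has bounded positive part; combined with the $m$-a.s.\ finiteness of $\log D$, a standard ergodic lemma (obtained from the identity $\tfrac{1}{n}h(\gamma^n\theta)=\tfrac{n+1}{n}S_{n+1}/(n+1)-S_n/n$ for Birkhoff sums $S_n$) gives $\tfrac{1}{n}\log D(\gamma^n\theta)\to 0$ $m$-a.s. Dividing the iterated inequality by $n$ and sending $n\to\infty$ produces $\lambda_m(\psi^+)\leq0\leq\lambda_m(\psi^-)$, and integrating the \emph{strict} pointwise estimate from the key inequality (which holds on a full-measure set) upgrades both to strict inequalities, giving $\lambda_m(\psi^+)<0<\lambda_m(\psi^-)$. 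If three distinct invariant graphs $\psi^-<\psi^0<\psi^+$ existed, applying this to the pairs $(\psi^-,\psi^0)$ and $(\psi^0,\psi^+)$ would force both $\lambda_m(\psi^0)>0$ and $\lambda_m(\psi^0)<0$, a contradiction; hence at most two invariant graphs. The most delicate step is justifying $\tfrac{1}{n}\log D(\gamma^n\theta)\to0$, since $\log D$ need not belong to $L^1(m)$ --- controlling its excursions towards $-\infty$ near points where the two graphs almost collide is precisely the role played by the integrable-minorant hypothesis on $\eta$, and this is the technical heart of the argument.
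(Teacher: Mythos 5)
The paper itself does not supply a proof of Theorem~\ref{t.convexity}: it cites \cite{keller:1996} and \cite{jaeger:2006a}. So your blind proposal can only be judged on its own merits, and indeed the overall plan --- order the graphs by ergodicity, use the chord--tangent inequality coming from strict concavity to sandwich the ratio $D(\gamma\theta)/D(\theta)$, telescope, apply Birkhoff, derive the ``at most two'' claim by contradiction with a middle graph --- is the right route and matches the one implemented in those references. The reduction to ordered graphs, the two-sided estimate $f'_\theta(\psi^+(\theta))D(\theta)<D(\gamma\theta)<f'_\theta(\psi^-(\theta))D(\theta)$ via invariance, and the three-graphs contradiction are all correct.

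The gap is in the claimed ``standard ergodic lemma''. You assert that $\tfrac1n\log D(\gamma^n\theta)\to0$ $m$-a.s.\ follows from $\log D$ having bounded positive part together with $m$-a.s.\ finiteness, citing the identity $\tfrac1n h(\gamma^n\theta)=\tfrac{n+1}{n}S_{n+1}/(n+1)-S_n/n$. That identity only gives the conclusion once both Birkhoff averages $S_n/n$ converge, i.e.\ essentially once $h\in L^1(m)$; bounded positive part plus a.s.\ finiteness is not enough. Concretely, for a Bernoulli shift there exist measurable $h\leq 0$, a.s.\ finite, with $h\notin L^1$, such that $\liminf_n\tfrac1n h(\gamma^n\theta)<0$ a.s.\ (take $h(x)=-2^{k(x)}$ where $k(x)$ is the length of the initial run of $1$'s and use the second Borel--Cantelli lemma). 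So the lemma you invoke is false at the stated level of generality, and since $\log D$ is a priori not known to be integrable, the step does not go through as written.

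What actually rescues the argument --- and you correctly flag this as the ``technical heart'' without carrying it out --- is a combination of the one-sided facts you already have. From $D\leq\mathrm{length}(I)$ one gets $\limsup_n\tfrac1n\log D(\gamma^n\theta)\leq 0$ without any integrability, and this already yields $\lambda_m(\psi^+)\leq 0$. Poincar\'e recurrence into a set $\{D>\varepsilon\}$ of positive measure gives $\limsup_n\tfrac1n\log D(\gamma^n\theta)\geq 0$ as well, which combined with the other telescoping inequality yields $\lambda_m(\psi^-)\geq 0$. Since the integrable minorant bounds $\lambda_m(\psi^+)$ from below as well, $\lambda_m(\psi^+)$ is finite, hence $\log f'_\theta(\psi^+(\theta))\in L^1$; then the increment $p(\theta)=\log D(\gamma\theta)-\log D(\theta)$ is sandwiched between an $L^1$ function and $\log f'_\theta(\psi^-(\theta))$, Birkhoff applied to $p$ together with the two $\limsup$ facts forces $p\in L^1$ with $\int p\,dm=0$, and the strict pointwise inequalities $u<p<w$ with $u,p\in L^1$ give $\lambda_m(\psi^+)=\int u<0<\int w=\lambda_m(\psi^-)$ even when $\lambda_m(\psi^-)=+\infty$. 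You should replace the unjustified convergence claim by this Poincar\'e-recurrence argument; the rest of the proposal then stands.
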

Implicitly, this result is contained in \cite{keller:1996}. A proof
for quasiperiodic forcing can be found in \cite{jaeger:2006a}, which
also remains valid in the more general case stated above.

Another situation where information on the Lyapunov exponent of an
invariant graph is available is the following.
\begin{lemma}
  \label{l.upperboundinglyap} Let $(\Theta,{\cal B},m ,\base)$ be a
{\em mpds} and $f$ be a $\base$-forced monotone ${\cal C}^1$-interval
map with compact fibres $M=[a,b]\ssq \R$. Suppose that the function
$\eta(\theta) = \inf_{x\in M} \log f'_\theta(x)$ has an integrable
minorant and let
\[
\psi^+(\theta) \ = \ \nLim f^n_{\gamma^{-n}\theta}(b) \ .
\]
Then $\psi^+$ is an invariant graph and $\lambda_m(\psi^+)\leq 0$.
\end{lemma}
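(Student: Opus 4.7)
For each $\theta\in\Theta$, set $a_n(\theta) := f^n_{\gamma^{-n}\theta}(b)$. Since every fibre map sends $M=[a,b]$ into itself, $f_\eta(b)\leq b$ for every $\eta\in\Theta$, and the monotonicity of $f^n_{\gamma^{-n}\theta}$ yields
$a_{n+1}(\theta) = f^n_{\gamma^{-n}\theta}\bigl(f_{\gamma^{-n-1}\theta}(b)\bigr) \leq f^n_{\gamma^{-n}\theta}(b) = a_n(\theta)$.
Thus $(a_n(\theta))_n$ is non-increasing and bounded below by $a$, so the pointwise limit $\psi^+(\theta)\in[a,b]$ exists for every $\theta$ and is measurable as a pointwise limit of measurable functions. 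Invariance is now immediate from the continuity of each fibre map: $f_\theta(\psi^+(\theta)) = \lim_{n\to\infty} f_\theta(a_n(\theta)) = \lim_{n\to\infty} f^{n+1}_{\gamma^{-n}\theta}(b) = \lim_{n\to\infty} a_{n+1}(\gamma\theta) = \psi^+(\gamma\theta)$.

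\textbf{Setting up the Lyapunov bound.}
Write $\delta_n(\theta) := a_n(\theta)-\psi^+(\theta)\geq 0$, which tends to $0$ as $n\to\infty$. Invariance gives $f^n_{\gamma^{-n}\theta}(\psi^+(\gamma^{-n}\theta)) = \psi^+(\theta)$, hence the Mean Value Theorem applied to $f^n_{\gamma^{-n}\theta}$ on $[\psi^+(\gamma^{-n}\theta),b]$ produces $\xi_n(\theta)$ in that interval with $(f^n_{\gamma^{-n}\theta})'(\xi_n(\theta)) = \delta_n(\theta)/(b-\psi^+(\gamma^{-n}\theta))$. Assuming the non-degenerate case $\psi^+<b$ on a set of positive measure (the alternative $\psi^+\equiv b$ corresponds to $f_\eta(b)=b$ for $m$-a.e.\ $\eta$, and the statement on $\lambda_m(\psi^+)$ then must be read along the constant graph $b$), taking logarithms and dividing by $n$ gives
$\tfrac{1}{n}\log(f^n_{\gamma^{-n}\theta})'(\xi_n(\theta)) = \tfrac{1}{n}\log\delta_n(\theta) - \tfrac{1}{n}\log\bigl(b-\psi^+(\gamma^{-n}\theta)\bigr)$.
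The first term has non-positive $\limsup$ because $\delta_n\to 0$, while the second is $o(1)$ almost surely provided $\log(b-\psi^+)\in L^1_m$, which follows from the integrable minorant hypothesis combined with $b-\psi^+\leq b-a$.

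\textbf{Linking to $\lambda_m(\psi^+)$ via Jensen and Fatou.}
Birkhoff's ergodic theorem applied to the $m$-integrable function $\theta\mapsto\log f'_\theta(\psi^+(\theta))$ (integrable above by a constant and below by the minorant) gives $\tfrac{1}{n}\log(f^n_{\gamma^{-n}\theta})'(\psi^+(\gamma^{-n}\theta)) \to \lambda_m(\psi^+)$ almost surely. The bridge from the derivative at the Mean Value point $\xi_n$ to the derivative at $\psi^+(\gamma^{-n}\theta)$ is the technical heart of the argument. The idea is to apply Jensen's inequality (concavity of $\log$) to the integral representation $\delta_n(\theta) = \int_{\psi^+(\gamma^{-n}\theta)}^{b}(f^n_{\gamma^{-n}\theta})'(x)\,dx$, yielding $\tfrac{1}{b-\psi^+(\gamma^{-n}\theta)}\int_{\psi^+(\gamma^{-n}\theta)}^{b}\log(f^n_{\gamma^{-n}\theta})'(x)\,dx \leq \log\bigl(\delta_n(\theta)/(b-\psi^+(\gamma^{-n}\theta))\bigr)$, then integrate in $\theta$ against $m$, expand with the cocycle identity $\log(f^n_\theta)'(x)=\sum_{k=0}^{n-1}\log f'_{\gamma^k\theta}(f^k_\theta(x))$, change variables using $m$-invariance, and apply reverse Fatou to pass to the limit in $n$.

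\textbf{Main obstacle.}
The delicate step is precisely this last passage. Without the concavity assumed in the preceding theorem (which gives monotonicity of $f'_\theta$ in the fibre variable and hence an immediate one-sided comparison between $(f^n)'(\xi_n)$ and $(f^n)'(\psi^+)$), the derivative at $\psi^+(\gamma^{-n}\theta)$ is not pointwise controlled by the Mean Value quantity $(f^n)'(\xi_n)$ that appears from the geometric estimate, and the control has to be extracted from $m$-integrated bounds only. The integrable minorant hypothesis enters exactly to legitimize the reverse Fatou step and to provide the $L^1$ convergence in Birkhoff that makes the passage from the averaged inequality to the pointwise conclusion $\lambda_m(\psi^+)\leq 0$ possible.
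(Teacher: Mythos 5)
The paper does not actually prove this lemma; it cites Lemma~3.5 of J\"ager's 2003 paper and asserts the proof carries over. So your argument must stand on its own, and it does not. The first part (monotonicity of $a_n(\theta)=f^n_{\gamma^{-n}\theta}(b)$, existence and measurability of the limit, invariance via continuity of the fibre maps) is correct. But the Lyapunov estimate is left incomplete at exactly the decisive point, and you say so yourself: the passage from the derivative $(f^n_{\gamma^{-n}\theta})'(\xi_n)$ at the mean-value point to the derivative at $\psi^+(\gamma^{-n}\theta)$ is announced as ``the technical heart'' and then deferred to a ``Main obstacle'' paragraph without being carried out. The Jensen/reverse-Fatou passage is only a plan; as written you establish $\limsup_n \frac1n\log(f^n_{\gamma^{-n}\theta})'(\xi_n)\le 0$ but never reach $\lambda_m(\psi^+)=\int\log f'_\theta(\psi^+(\theta))\,dm$. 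Two secondary problems: the claim that $\log(b-\psi^+)\in L^1(m)$ ``follows from the integrable minorant combined with $b-\psi^+\le b-a$'' is not a proof, since that upper bound only controls $\log(b-\psi^+)$ from above while integrability requires keeping $\psi^+$ away from $b$; and your parenthetical on the degenerate case $\psi^+\equiv b$ does not yield $\lambda_m(\psi^+)\le 0$ there (if $b$ is a fixed boundary point with $f'_\theta(b)>1$ on average, the inequality genuinely fails, so this case has to be excluded or treated separately, not merely ``read along the constant graph'').

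The standard way to close the main gap avoids both of your integrability worries: telescope one step at a time going \emph{forward}. Writing $d_k(\theta)=a_k(\theta)-\psi^+(\theta)$, the mean value theorem gives $d_{k+1}(\gamma^{k+1}\theta)=f'_{\gamma^k\theta}(\zeta_k)\,d_k(\gamma^k\theta)$ with $\zeta_k\in[\psi^+(\gamma^k\theta),a_k(\gamma^k\theta)]$, hence $\frac1n\sum_{k=0}^{n-1}\log f'_{\gamma^k\theta}(\zeta_k)=\frac1n\log\bigl(d_n(\gamma^n\theta)/d_0(\theta)\bigr)\le\frac1n\log\bigl((b-a)/d_0(\theta)\bigr)\to 0$ whenever $d_0(\theta)=b-\psi^+(\theta)>0$. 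Since $a_k\le a_N$ pointwise for $k\ge N$, each $\zeta_k$ with $k\ge N$ lies in $[\psi^+(\gamma^k\theta),a_N(\gamma^k\theta)]$, so $\log f'_{\gamma^k\theta}(\zeta_k)\ge G_N(\gamma^k\theta)$ with $G_N(\theta):=\inf\{\log f'_\theta(x): x\in[\psi^+(\theta),a_N(\theta)]\}$. Birkhoff applied to $G_N$ (bounded below by the integrable minorant of $\eta$) gives $\int G_N\,dm\le 0$ for every $N$, and $G_N\uparrow\log f'_\theta(\psi^+(\theta))$ by continuity of $f'_\theta$ and $a_N\downarrow\psi^+$; monotone convergence --- this is where the integrable minorant is genuinely needed --- then yields $\lambda_m(\psi^+)\le 0$. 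Your Jensen route could probably be reorganised along these lines, but as submitted the comparison that the whole lemma hinges on is missing.
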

This result is contained in \cite[Lemma 3.5]{jaeger:2003} for the case
of quasiperiodic forcing, but again the proof given there remains
valid in the more general version stated above.

The following lemma from \cite{anag/jaeger:2011} is a variation of a
result by Sturman and Stark \cite{sturman/stark:2000}.
\begin{lemma}[\cite{anag/jaeger:2011}] \label{l.curve-criterium}
  Suppose $\base$ is a homeomorphism of a compact metric space
  $\Theta$, $f$ is a $\base$-forced ${\cal C}^1$-interval map and $K$
  is a compact $f$-invariant set that intersects every fibre
  $\{\theta\}\times X$ in a single interval. Further, assume that for
  all $\base$-invariant measures $m$ and all $(f,m)$-invariant graphs
  $\psi$ contained in $K$ we have $\lambda_m(\psi)< 0$. Then $K$ is a
  continuous $f$-invariant curve.
\end{lemma}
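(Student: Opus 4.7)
The plan is to show that the upper and lower boundaries of $K$ in each fibre coincide, which forces $K$ to be the graph of a continuous function. First, I would define
\[
\psi^+(\theta) := \max\{x \mid (\theta,x)\in K\}, \qquad \psi^-(\theta) := \min\{x \mid (\theta,x)\in K\}.
\]
Since $K$ is compact, $\psi^+$ is upper and $\psi^-$ lower semicontinuous; since each $f_\theta$ is a monotone continuous map on the fibre (so that its image of the interval $K(\theta) = [\psi^-(\theta),\psi^+(\theta)]$ is again an interval whose endpoints are the images of the endpoints), both $\psi^\pm$ are everywhere-defined $f$-invariant graphs contained in $K$. If I can prove that $g:=\psi^+-\psi^-\equiv 0$, the common value $\psi^+$ will be both upper and lower semicontinuous, hence continuous, and $K=\graph(\psi^+)$.

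Next I would translate the hypothesis on graphs into an integral condition for the continuous function $\Phi:K\to\R$, $\Phi(\theta,x)=\log f'_\theta(x)$. By Theorem~\ref{t.measures}, every $f|_K$-invariant ergodic probability measure is of the form $m_\psi$ for some $(f,m)$-invariant graph $\psi\ssq K$, and $\int\Phi\,dm_\psi=\lambda_m(\psi)<0$ by hypothesis. Ergodic decomposition then yields $\int\Phi\,d\mu<0$ for every $f|_K$-invariant probability measure $\mu$. Applying a semiuniform (Birkhoff-type) ergodic theorem on the compact invariant set $K$ to the continuous cocycle $\Phi$ produces constants $\delta>0$ and $N\in\N$ with
\[
\sum_{i=0}^{n-1}\Phi\bigl(f^i(\theta,x)\bigr) < -n\delta \quad \text{for all } n\geq N \text{ and all } (\theta,x)\in K.
\]

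Finally, to exploit this uniform negativity, I would apply the mean value theorem to the monotone $\mathcal{C}^1$-map $f_\theta^n$ on the interval $[\psi^-(\theta),\psi^+(\theta)]$, whose image is $[\psi^-(\base^n\theta),\psi^+(\base^n\theta)]$. This produces some $\xi\in[\psi^-(\theta),\psi^+(\theta)]$, whence $(\theta,\xi)\in K$ and
\[
g(\base^n\theta) = (f_\theta^n)'(\xi)\,g(\theta) = \exp\Bigl(\sum_{i=0}^{n-1}\Phi\bigl(\base^i\theta,f_\theta^i(\xi)\bigr)\Bigr)\,g(\theta) \leq e^{-n\delta}\,g(\theta)
\]
for $n\geq N$, using that $(\base^i\theta,f_\theta^i(\xi))\in K$ along the whole orbit. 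Since $\base$ is a homeomorphism of $\Theta$, applying this with $\base^{-n}\theta$ in place of $\theta$ gives $g(\theta)\leq e^{-n\delta}\diam(M)\to 0$, and so $g\equiv 0$.

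The main obstacle is the passage from the \emph{a priori} hypothesis on $(f,m)$-invariant \emph{graphs} to a pointwise uniform negative estimate along every orbit in $K$: this is what allows the mean value argument to go through regardless of where the intermediate point $\xi$ lies. The crucial ingredients are Theorem~\ref{t.measures}, which identifies all $f|_K$-invariant ergodic measures with graph measures (so the hypothesis controls \emph{all} invariant measures), and the semiuniform ergodic theorem on the compact invariant set $K$, which upgrades this to a uniform exponential contraction of the fibrewise lengths.
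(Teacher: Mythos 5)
The paper does not prove this lemma --- it is quoted from \cite{anag/jaeger:2011} (a variant of Sturman--Stark) --- so there is no in-paper proof to compare against; judged on its own, your argument is a correct reconstruction of the standard one, and it uses exactly the two ingredients (Theorem~\ref{t.measures} plus the semiuniform ergodic theorem on the compact invariant set) that the paper itself combines in the corollary following Proposition~\ref{p.doubleskew.case3}. The chain ``bounding graphs $\psi^\pm$ are invariant $\Rightarrow$ all ergodic measures on $K$ are graph measures with negative exponent $\Rightarrow$ uniform fibrewise contraction $\Rightarrow$ pullback along $\base^{-n}$ collapses $\psi^+-\psi^-$ to $0$ $\Rightarrow$ u.s.c.\ $+$ l.s.c.\ $=$ continuous'' is complete and the ergodic-decomposition step needed to pass from ergodic to arbitrary invariant measures is correctly noted. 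Two small caveats, neither fatal: (i) your proof (and the invocation of Theorem~\ref{t.measures}) uses monotonicity of the fibre maps, which is not literally in the lemma's hypotheses as printed but is clearly intended (the definition of $\lambda_m(\psi)$ via $\log f'_\theta$ without absolute value presupposes it, and all applications in the paper are to monotone maps); (ii) $\Phi(\theta,x)=\log f'_\theta(x)$ is only a continuous real-valued function on $K$ if $f'_\theta>0$ there --- if the derivative can vanish one should truncate $\Phi$ below at some level $-M$, which only strengthens the contraction estimate and leaves the argument intact.
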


Now suppose that $T:Y\rightarrow Y$ is a measurable transformation of
a measurable space $Y$ and $(\Phi_n)_{n\in\N}$ is a subadditive
sequence of measureable functions $\Phi_n : Y \to \R$.\footnote{Recall
  that a sequence $\Phi_n:Y \to \R$ is subadditive if
  $\Phi_{m+n}(y)\leq \Phi_n(y)+\Phi_m(T^n(y))$ for all $y\in Y$.}
Let $\mu$ be a $T$-invariant measure and assume that the $\Phi_n$ are
integrable with respect to $\mu$. We write $\mu(\Phi_n)=\int \Phi_n
d\mu$. Then subadditivity yields $\mu(\Phi_{n+m})\leq
\mu(\Phi_{n})+\mu(\Phi_{m})$, and hence Fekete's Subadditivity Lemma
implies that
$$\overline{\Phi}_\mu:=
\lim_{n\rightarrow\infty}\frac{1}{n}\mu(\Phi_n)=\inf_{n\geq0}\frac{1}{n}\mu(\Phi_n)$$
is well defined. In addition, if $\mu$ is ergodic then
$\lim_{n\rightarrow\infty}\frac{1}{n}\Phi_n=\overline{\Phi}_\mu$
$\mu$-almost surely by Kingman's Ergodic Theorem. The following
semi-uniform ergodic theorem from \cite{sturman/stark:2000} will be
used frequently in the discussion of deterministic forcing in
Section~\ref{top.setting}.
\begin{theorem}[Corollary 1.11 in \cite{sturman/stark:2000}]\label{Semi-uniform ergodic}
  Suppose that $T:Y\rightarrow Y$ is a continuous map on a compact
  metrizable space $Y$ and $(\Phi_n)_{n\in\N}$ is a sub-additive
  sequence of continuous functions $\Phi_n : Y \to \R$. Let
  $\lambda\in\R$ be a constant such that $\overline{\Phi}_\mu<\lambda$
  for every $T$-invariant measure $\mu$. Then there exist
  $\varepsilon>0$ and $n_0\in\N$ such that for all $n\geq
  n_0$ we have
$$ \frac{1}{n} \Phi_n(y) \leq \lambda-\varepsilon \qquad
\forall y\in Y.
$$
\end{theorem}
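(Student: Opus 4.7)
The strategy I would follow is a compactness-plus-contradiction argument. Suppose the conclusion fails, so there exist sequences $\varepsilon_k\downarrow 0$, $n_k\to\infty$ and $y_k\in Y$ with $\frac{1}{n_k}\Phi_{n_k}(y_k)>\lambda-\varepsilon_k$. Passing to a subsequence, $\frac{1}{n_k}\Phi_{n_k}(y_k)\to\ell$ for some $\ell\geq\lambda$. Associate to these the empirical measures
\[
  \mu_k \ := \ \frac{1}{n_k}\sum_{i=0}^{n_k-1}\delta_{T^i y_k}
\]
on $Y$. Compactness of $Y$ yields weak-$*$ compactness of the space of Borel probability measures, so along a further subsequence $\mu_k\to\mu$ weakly. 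A standard Krylov--Bogolyubov computation, exploiting that $\mu_k-T_*\mu_k=(\delta_{y_k}-\delta_{T^{n_k}y_k})/n_k\to 0$ weakly and that $T$ is continuous, shows that $\mu$ is $T$-invariant.

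The heart of the argument is then to prove that $\mu$ violates the hypothesis $\bar{\Phi}_\mu<\lambda$. Fix $m\in\N$ and, for each offset $j\in\{0,1,\ldots,m-1\}$, write $n_k-j=q_j m+r_j$. Iterating subadditivity yields
\[
  \Phi_{n_k}(y_k) \ \leq \ \Phi_j(y_k) \ + \ \sum_{i=0}^{q_j-1}\Phi_m(T^{j+im}y_k) \ + \ \Phi_{r_j}(T^{j+q_j m}y_k).
\]
Averaging these $m$ inequalities over $j$, the double sum over $j$ and $i$ assembles $\Phi_m(T^s y_k)$ over a subset $S\subseteq\{0,\ldots,n_k-1\}$ whose complement has at most $2m$ elements. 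Continuity of $\Phi_0,\ldots,\Phi_m$ on the compact space $Y$ provides uniform bounds and hence a constant $C_m$ controlling all boundary contributions, so that
\[
  \frac{1}{n_k}\Phi_{n_k}(y_k) \ \leq \ \frac{1}{m}\int\Phi_m\,d\mu_k \ + \ \frac{C_m}{n_k}.
\]
Passing to the limit $k\to\infty$, weak-$*$ convergence together with continuity of $\Phi_m$ gives $\ell\leq\frac{1}{m}\int\Phi_m\,d\mu$. Taking the infimum over $m$ and invoking Fekete's lemma (recalled just before the theorem statement) yields $\ell\leq\bar{\Phi}_\mu$, so that $\bar{\Phi}_\mu\geq\ell\geq\lambda$, contradicting the assumption that $\bar{\Phi}_\mu<\lambda$ for every $T$-invariant measure.

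The main obstacle lies in the averaging step: it is the only point where continuity of the $\Phi_n$, as opposed to mere measurability, is used in an essential way. One must simultaneously pass from $\int\Phi_m\,d\mu_k$ to $\int\Phi_m\,d\mu$ under weak-$*$ convergence (requiring $\Phi_m$ to be a legitimate test function) and uniformly control in $k$ the $O(m)$ boundary contributions $\Phi_j(y_k)$ and $\Phi_{r_j}(\cdot)$ for $j,r_j<m$, for which continuity on the compact space $Y$ furnishes the needed uniform bound. Once this averaging estimate is in hand, the remainder of the proof reduces to the routine Krylov--Bogolyubov invariance check and the identification $\bar{\Phi}_\mu=\inf_n\mu(\Phi_n)/n$ already noted in the paper.
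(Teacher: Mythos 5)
Your argument is correct and complete. Note, however, that the paper does not prove this statement at all: it is imported verbatim as Corollary 1.11 of the cited Stark--Sturman reference, so there is no internal proof to compare against. What you have written is essentially the standard proof of the semi-uniform subadditive ergodic theorem (the one given in that reference): negate the conclusion, extract maximising points $y_k$ and times $n_k\to\infty$, pass to a weak-$*$ limit $\mu$ of the empirical measures, verify $T$-invariance by Krylov--Bogolyubov, and use the block decomposition of $\Phi_{n_k}$ averaged over the $m$ offsets to get $\ell\leq\frac{1}{m}\mu(\Phi_m)$ for every $m$, whence $\bar\Phi_\mu\geq\lambda$ by Fekete. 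Two cosmetic points you may wish to make explicit: the subsequential limit $\ell$ exists because $\frac{1}{n}\Phi_n\leq\max_Y\Phi_1$ by subadditivity and compactness, so the quantities are bounded above as well as below; and the boundary terms $\Phi_j$, $\Phi_{r_j}$ with index $0$ should be read with the convention $\Phi_0\equiv 0$. Neither affects the validity of the argument.
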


For the case of random forcing, we need a random analogue of this
result. In order to state it, we need some more notation.  Assume
$(\Theta,\mathcal{B})$ is a measurable space, $\gamma:\Theta\to\Theta$
a measurable transformation and $T:\Theta\times M\to \Theta\times M$
is a continuous random map with base $\base$. Given
$m\in\mathcal{M}(\base)$, denote the set of all $T$-invariant
probability measures which project to $m$ by $\mathcal{M}_m(T)$.
Following \cite{arnold:1998,castaing-valadier:1975} we
say $K\ssq \Theta\times M$ is a {\em random compact} set if \romanlist
\item $K(\theta)=\{x\in M\mid (\theta,x)\in K\}$ is compact for
  $m$-a.e.\ $\theta\in\Theta$;
\item the functions $\theta\mapsto d(x,K(\theta))$ are measurable for
  all $x\in M$.  \listend
  $K$ is called {\em forward $T$-invariant} if
  $T_\theta(K(\theta))\ssq K(\base\theta)$ for $m$-a.e.\
  $\theta\in\Theta$.  Given any forward $T$-invariant random compact
  set $K$, we denote the set of $\mu\in\mathcal{M}_m(T)$ which are
  supported on $K$ by $\mathcal{M}^K_m(T)$. Further, we assume that
  $\nfolge{\Phi_n}$ is a subadditive sequence of functions
  $\Phi_n:\Theta\times M\to\R$ which are continuous in the second
  variable and let
\begin{eqnarray*}
   \Phi^{\mathrm{abs}}_n(\theta) & = & \max\{|\Phi_n(\theta,x)|\mid x\in K(\theta)\} \  .
\end{eqnarray*}
We call a random variable $C:\Theta\to \R$ {\em adjusted} with respect
to $\base$, if it satisfies $\nLim \ntel C(\base^n\theta)=0$ for
$m$-a.e.\ $\theta\in\Theta$.
\begin{thm}[\cite{jaeger/keller:2012}]\label{t.random_semiuniform}
  Let $T:\Theta\times M\to \Theta\times M$ be a continuous random map
  over the ergodic {\em mpds} $(\Theta,\mathcal{B},m,\base)$. Suppose
  that $\nfolge{\Phi_n}$ is a subadditive sequence of functions
  $\Phi_n:\Theta\times M\to\R$ which are continuous in the second
  variable. Further, assume that $K$ is a forward $T$-invariant random
  compact set, $\Phi_n^{\mathrm{abs}} \in L^1(m)$ for all $n\in\N$ and
  $\lambda\in\R$ satisfies $\overline\Phi_\mu < \lambda$ for all
  $\mu\in\mathcal{M}^K_m(T)$.  Then there exists $\lambda'<\lambda$
  and a tempered random variable $C:\Theta\to\R$ such that
  \begin{equation}
    \Phi_n(\theta,x) \ \leq \ C(\theta)+n\lambda' \quad \textrm{for } m\textrm{-a.e.}
    \theta\in\Theta \textrm{ and all } x\in K(\theta).
  \end{equation}
  In particular, there exists $\eps>0$ such that for $m$-a.e.\
  $\theta\in\Theta$ there is an integer $n(\theta)\in \N$ with
  \begin{equation}
    \ntel \Phi_n(\theta,x) \ < \ \lambda \qquad \textrm{for all }  n\geq n(\theta)
  \textrm{ and } x\in K(\theta).
  \end{equation}
\end{thm}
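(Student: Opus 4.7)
The plan follows the strategy of the deterministic Sturman-Stark theorem (Theorem~\ref{Semi-uniform ergodic}) adapted to the random setting via Young-measure compactness, and proceeds in three steps: reduction to a subadditive cocycle on the base, a variational identification, and extraction of the tempered bound.

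First I would replace the pointwise subadditive sequence by its fibrewise supremum,
\[
\Psi_n(\theta)\ :=\ \sup_{x\in K(\theta)}\Phi_n(\theta,x)\ .
\]
Since $\Phi_n$ is continuous in $x$, $K(\theta)$ is compact, and $K$ is a random compact set, $\Psi_n$ is measurable by the measurable maximum theorem, and $|\Psi_n|\leq \Phi_n^{\mathrm{abs}}\in L^1(m)$. Forward $T$-invariance of $K$ together with subadditivity of $\Phi_n$ gives
\[
\Psi_{n+m}(\theta)\ \leq\ \Psi_n(\theta)+\Psi_m(\gamma^n\theta)\ ,
\]
so $(\Psi_n)$ is a subadditive cocycle on $(\Theta,m,\gamma)$. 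Kingman's subadditive ergodic theorem yields a constant
\[
\overline{\Psi}\ =\ \lim_{n\to\infty}\frac{1}{n}\Psi_n(\theta)\ =\ \inf_{n\geq 1}\frac{1}{n}\int\Psi_n\,dm
\]
for $m$-a.e.\ $\theta$.

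The heart of the proof is the variational identity $\overline{\Psi}=\sup_{\mu\in\mathcal{M}_m^K(T)}\overline{\Phi}_\mu$. The inequality $\overline{\Psi}\geq\overline{\Phi}_\mu$ is automatic: Kingman applied to $(\Phi_n,T,\mu)$ gives $\Phi_n/n\to\overline{\Phi}_\mu$ $\mu$-a.s., and $\Phi_n(\theta,x)/n\leq\Psi_n(\theta)/n\to\overline{\Psi}$ holds on a $\mu$-full set since $\mu$ projects to $m$. The reverse inequality is obtained by a Krylov-Bogolyubov-type construction: choose measurable selectors $x_n:\Theta\to M$ with $\Phi_n(\theta,x_n(\theta))=\Psi_n(\theta)$, and form the Ces\`aro averages
\[
\mu_n\ :=\ \frac{1}{n}\sum_{k=0}^{n-1}T^k_{*}\bigl((x_n)_*m\bigr)\ ,
\]
each of which projects to $m$ and is supported on $K$ by forward invariance. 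Extracting a narrow-topology limit $\mu$ along a subsequence, one verifies $T$-invariance via the boundary-term estimate $T_*\mu_n-\mu_n=O(1/n)$, and uses the subadditive block decomposition of $\Phi_{np}$ into $n$ copies of $\Phi_p$ together with $\gamma$-invariance of $m$ to show $\overline{\Phi}_\mu\geq\overline{\Psi}$. Since by hypothesis $\overline{\Phi}_\mu<\lambda$ for every $\mu\in\mathcal{M}_m^K(T)$, it follows that $\overline{\Psi}<\lambda$.

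Given $\overline{\Psi}<\lambda$, pick any $\lambda'\in(\overline{\Psi},\lambda)$ and define
\[
C(\theta)\ :=\ \sup_{n\geq 0}\bigl(\Psi_n(\theta)-n\lambda'\bigr)^+\ .
\]
The convergence $\Psi_n(\theta)/n\to\overline{\Psi}<\lambda'$ guarantees $C(\theta)<\infty$ for $m$-a.e.\ $\theta$, and by construction $\Phi_n(\theta,x)\leq\Psi_n(\theta)\leq C(\theta)+n\lambda'$ for every $x\in K(\theta)$. Temperedness of $C$ with respect to $\gamma$ follows from a standard Borel-Cantelli argument, controlled by $\Phi_n^{\mathrm{abs}}\in L^1(m)$ and the subadditive structure. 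The second conclusion, $\frac{1}{n}\Phi_n(\theta,x)<\lambda$ for $n\geq n(\theta)$, is then immediate. The main obstacle lies in the construction of the limit measure $\mu$ and the verification of $\overline{\Phi}_\mu\geq\overline{\Psi}$: since $\Theta$ carries no topological structure in the random setting, the direct Prokhorov-type compactness used by Sturman and Stark is unavailable, and one must instead work within the narrow topology on Young measures, where tightness is supplied by fibrewise compactness of $K$. Carrying the block-decomposition estimate through this purely measure-theoretic framework, while respecting the interaction between the base and fibre variables, is the technical content of the proof given in \cite{jaeger/keller:2012}.
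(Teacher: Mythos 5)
First, a caveat on the comparison: the paper does not prove Theorem~\ref{t.random_semiuniform} at all --- it is imported verbatim from the preprint \cite{jaeger/keller:2012}, so there is no internal proof to measure you against. That said, your strategy (pass to the fibrewise maximum $\Psi_n(\theta)=\sup_{x\in K(\theta)}\Phi_n(\theta,x)$, check subadditivity over the base using forward invariance of $K$, apply Kingman, and then establish $\overline\Psi\leq\sup_{\mu\in\mathcal{M}_m^K(T)}\overline\Phi_\mu$ by a Krylov--Bogolyubov construction in Crauel's narrow topology on random measures, with tightness coming from $K$) is exactly the right one and is, to my knowledge, the route taken in the cited reference. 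The $\geq$ direction of your variational identity is even easier than you make it: $\frac1n\int\Phi_n\,d\mu\leq\frac1n\int\Psi_n\,dm$ already gives $\overline\Phi_\mu\leq\overline\Psi$ by Fekete, with no appeal to Kingman. For the $\leq$ direction you should make explicit that testing $\Phi_p$ against the narrowly convergent sequence $\mu_{n_j}$ is legitimate because $|\Phi_p|\leq\Phi_p^{\mathrm{abs}}\in L^1(m)$ on $K$, which supplies the uniform integrability needed to pass from bounded test functions to $\Phi_p$; this is precisely where the hypothesis $\Phi_n^{\mathrm{abs}}\in L^1(m)$ enters, and the measurable selection of $x_n$ needs a Castaing--Valadier type theorem, available because $K$ is a random compact set and $\Phi_n$ is continuous in $x$.

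The genuine gap is the temperedness (in the paper's terminology, ``adjustedness'': $\lim_n\frac1nC(\base^n\theta)=0$) of $C(\theta)=\sup_{n\geq0}(\Psi_n(\theta)-n\lambda')^+$. Your proposed ``standard Borel--Cantelli argument'' requires $\sum_nm\{C>n\eps\}<\infty$, i.e.\ $C\in L^1(m)$; but $C$ is a supremum of infinitely many $L^1$ functions and need not be integrable (already in the additive i.i.d.\ case, $\sup_kS_k$ of a mean-negative random walk has infinite mean when the positive part of the increment fails to be square-integrable). Nor does subadditivity help directly: it yields only the \emph{lower} bound $\Psi_k(\base^n\theta)\geq\Psi_{n+k}(\theta)-\Psi_n(\theta)$, whereas controlling $C(\base^n\theta)$ requires an upper bound on $\Psi_k(\base^n\theta)$. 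So this step, as written, does not go through and needs a separate argument (this is one of the actual technical points of \cite{jaeger/keller:2012}). Note, however, that everything the present paper extracts from the theorem --- the ``in particular'' statement and estimates such as \eqref{e.random-upperbound} --- uses only the $m$-a.e.\ finiteness of $C$, which your argument does establish correctly.
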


\section{Double skew product structure}\label{top.doubleskew}

\subsection{Polar coordinates}\label{Polar}

In order to understand and analyse the dynamics of $f_\beta$, it is
convenient to use projective polar coordinates. Let
$\R^2_*=\R^2\smin\{0\}$ and consider the maps
\begin{eqnarray*}
&p\ : \ \R^2_* \rightarrow \kreis \ , \quad
&p(v)=\frac{1}{\pi}\arctan\left(\frac{v_2}{v_1}\right) \bmod 1\ ,\\
&P\ : \ \R^2_* \rightarrow \kreis \times (0,\infty) \ , \quad
&P(v)=\left(p(v),\|v\|\right)\ .
\end{eqnarray*}
$P$ is two-to-one, and if we let
$$\mathcal{H}^+\ =\ \{(x_1,x_2)\in\R^2_*\mid v_2\geq 0 \textrm{ and } v_2>0
\textrm{ if } v_1<0\}$$ and $\mathcal{H}^-=\R^2_*\smin\mathcal{H}^+$,
then $P$ has two inverse branches
$Q^+=\left(P_{|\mathcal{H}^+}\right)^{-1}$ and
$Q^-=\left(P_{|\mathcal{H}^-}\right)^{-1}$, where $Q^\pm(\alpha,r) =
\pm(r\cos(\pi\alpha),r\sin(\pi\alpha))$.

Let $\hat{P}(\theta,v)=(\theta, P(v))$ and $\hat
Q^\pm(\theta,\alpha,r)=(\theta,Q^\pm(\alpha,r))$. Then the action of $f_\beta$ on
polar coordinates is given by
\begin{equation}
  \label{e.F_betatilde}
  \begin{array}{l}\tilde F_\beta\ : \  \Theta\times\kreis\times(0,\infty)\rightarrow\Theta\times
    \kreis\times(0,\infty)\\ \ \\
    (\theta,\alpha,r)\mapsto\hat{P}\circ
    f_{\beta|\Theta\times R^2_*}\circ
    \hat Q^+(\theta,\alpha,r) \ .
    \end{array}
  \end{equation}
  As $f_{\beta,\theta}(-v)=-f_{\beta,\theta}(v)$, we may equally have used $\hat Q^-$
  instead of $\hat Q^+$. For the same reason $\tilde F_\beta$ is continuous, as the
  discontinuity of $\hat Q^+$ is cancelled by $\hat P$.  We have
\begin{lemma}\label{l.conjugacy}
\begin{enumerate}[label=(\roman*)]
\item $\hat{P}$ is a two-to-one factor map from
  $f_{\beta|\Theta\times\R^2_*}$ to $\tilde F_\beta$.
\item The map $\tilde F_\beta$ extends to an injective skew product map
 \begin{equation}
  \label{e.F_beta}
  \begin{array}{l} F_\beta\ : \  \Theta\times\kreis\times[0,\infty)\rightarrow\Theta\times
    \kreis\times[0,\infty)\\ \ \\
    (\theta,\alpha,r)\mapsto (g(\theta,\alpha),F_{\beta,\theta,\alpha}(r)) \
    \end{array}
  \end{equation}
  such that $(\alpha,r)\mapsto F_{\beta,\theta,\alpha}(r)$ is
  continuous for all $\theta\in\Theta$.
\item The base map $g$ is given by
$$g(\theta,\alpha)=\left(\base\theta,
  g_\theta(\alpha)\right)=\left(\base\theta,\frac{1}{\pi}\arctan\left(
    \frac{c_\theta + d_\theta\tan \pi\alpha}{a_\theta +b_\theta\tan
      \pi\alpha}\right) \bmod 1\right) \ ,
      $$ the fibre maps by
$$
F_{\beta,\theta,\alpha}(r) \ = \ h(\beta r) \Omega(\theta,\alpha) \ ,
$$
where $\Omega(\theta,\alpha) = \|A(\theta)(\cos \pi\alpha, \sin
\pi\alpha)\|$.
\item If $\Theta$ is a metric space and $\base$ is continuous, then
  $F_\beta$ is a continuous map.
\end{enumerate}
\end{lemma}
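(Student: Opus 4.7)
The plan is to verify the four parts in order, doing one honest computation of $f_\beta$ in polar coordinates to produce (iii) and then reading off (ii) and (iv) from the resulting formulas. For (i), I first note that $p(-v)=p(v)$ and $\|-v\|=\|v\|$, so $P$ identifies antipodal points and $\hat P$ is two-to-one. The factor relation $\hat P\circ f_\beta = \tilde F_\beta \circ \hat P$ reduces, via the definition $\tilde F_\beta = \hat P\circ f_\beta\circ \hat Q^+$, to the fibre symmetry $f_{\beta,\theta}(-v)=-f_{\beta,\theta}(v)$, which is immediate from (\ref{f_beta-fibres}). This in particular explains why the choice of inverse branch $\hat Q^\pm$ is immaterial and why $\tilde F_\beta$ is continuous, even though $\hat Q^+$ is not.

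For (iii) I would parametrise a non-zero vector as $v=(r\cos\pi\alpha,r\sin\pi\alpha)$, so that (\ref{f_beta-fibres}) gives $f_{\beta,\theta}(v)=h(\beta r)\,A(\theta)(\cos\pi\alpha,\sin\pi\alpha)^t$. Its Euclidean norm is exactly $h(\beta r)\Omega(\theta,\alpha)$ by definition of $\Omega$, and writing $A(\theta)=\twomatrix{a_\theta}{b_\theta}{c_\theta}{d_\theta}$ and applying $\tfrac{1}{\pi}\arctan\,\mathrm{mod}\,1$ to the quotient of the two resulting components, then dividing numerator and denominator by $\cos\pi\alpha$, yields the stated Möbius-type expression for $g_\theta(\alpha)$; the vertical direction $\cos\pi\alpha=0$ is absorbed by the $\mathrm{mod}\,1$ convention. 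For (ii), this same formula shows that $r$ enters only through $h(\beta r)$, so setting $F_{\beta,\theta,\alpha}(0)=0$ (which is forced by $h(0)=0$) extends $\tilde F_\beta$ continuously to $r=0$, and continuity of $(\alpha,r)\mapsto F_{\beta,\theta,\alpha}(r)$ on $\kreis\times[0,\infty)$ follows from continuity of $h$ and of $\Omega(\theta,\cdot)$ on $\kreis$ (the latter being strictly positive since $A(\theta)\in\sltr$ is invertible).

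Injectivity of $F_\beta$ is then read off coordinate by coordinate: $\gamma$ is a bijection by (D1)/(R1); the projective action $g_{\theta_1}$ of $A(\theta_1)\in\sltr$ is a homeomorphism of the projective circle $\kreis$, so equality of the angular components forces $\alpha_1=\alpha_2$; and $h(\beta r_1)\Omega=h(\beta r_2)\Omega$ with $\Omega>0$ yields $r_1=r_2$ by strict monotonicity of $h$. Finally (iv) is immediate once $\gamma$ and $A$ are continuous, since $g$, $\Omega$ and $r\mapsto h(\beta r)$ are then continuous in all their arguments. I do not expect a genuine obstacle; the only mildly delicate points are checking that $g_\theta$ extends to a global homeomorphism of $\kreis$ (rather than being merely defined on a chart) and that the extension at $r=0$ is truly continuous, both of which are absorbed by the normalisations $h(0)=0$ and $\det A(\theta)=1$.
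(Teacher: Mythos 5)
Your proposal is correct and follows essentially the same route as the paper: compute $\hat P\circ f_\beta\circ\hat Q^+$ explicitly in the parametrisation $v=(r\cos\pi\alpha,r\sin\pi\alpha)$ to read off $g$ and $F_{\beta,\theta,\alpha}$, reduce the factor-map and branch-independence claims to the symmetry $f_{\beta,\theta}(-v)=-f_{\beta,\theta}(v)$, and verify injectivity and continuity coordinatewise after setting $F_{\beta,\theta,\alpha}(0)=0$. You in fact supply more detail than the paper, which leaves the injectivity and continuity checks as "easy to verify".
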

\begin{proof}
  By the definition of $\tilde F_\beta$, $\hat{P}$ is a two-to-one
  factor map between $\tilde F_\beta$ and
  $f_{\beta|\Theta\times\R^2_*}$. Then,
\begin{eqnarray*}
  F_\beta(\theta,\alpha,r) &=& \hat{P}\left(\base\theta,
    h(\beta
    r)\frac{ A(\theta)}{r}(r\cos \pi\alpha, r\sin\pi \alpha)\right)\nonumber\\
  &=& \left(\base\theta, \frac{1}{\pi}\arctan\left(
      \frac{c_\theta + d_\theta\tan \pi\alpha}{a_\theta +b_\theta\tan
        \pi\alpha}\right)  \bmod
    1,
    h(\beta r)\| A(\theta)(\cos \pi\alpha, \sin
    \pi\alpha)\|\right).
    \end{eqnarray*}
     If we now define
$F_{\beta,\theta,\alpha}(0)=(g(\theta,\alpha),0)$, then
the injectivity and the claimed continuity properties of $F_\beta$ are easy
to verify and the formulae for $g$ and $F_{\beta,\theta,\alpha}$ follow immediately.
\end{proof}

For later use we note that
\begin{equation}\label{eq:Omega_n}
\|A_n(\theta)(\cos\pi\alpha,\sin\pi\alpha)\|
\ = \
\prod_{k=0}^{n-1} \Omega\circ g^k(\theta,\alpha) \ .
\end{equation}

\subsection{Lyapunov exponents}

The above transformation makes it possible to apply existing results
on skew product maps with one-dimensional fibres to study the dynamics
of $F_\beta$ and, subsequently, the dynamics of $f_\beta$. An
important issue in this are the relations between Lyapunov exponents
of the original and the transformed system.  We start with a
corollary of Oseledets's Multiplicative Ergodic Theorem.
\begin{thm}\label{oseledets}
  Let $A: \Theta\rightarrow SL(2,\R)$ be measurable.

  If $\lambda_m(A)>0$, then there exists a splitting
  $\R^2=E^s(\theta)\oplus E^u(\theta)$ such that $A(\theta)\cdot
  E^i(\theta)=E^i(\base\theta)$ for $i=s,u$.  For $i=s,u$ we have
  $E^i(\theta)=\R\cdot v^i(\theta)$ for vectors
  $v^s(\theta),v^u(\theta)\in \R^2$ and
  $\lim_{n\rightarrow\infty}\frac{1}{n}\log\frac{\|A_n(\theta)\cdot
    v\|}{\|v\|}=\lambda_i$ for all non-zero $v\in E^i(\theta)$, where
  $\lambda_u=\lambda_m(A)$ and $\lambda_s=-\lambda_m(A)$.

  If $\lambda_m(A)=0$, then
  $\lim_{n\rightarrow\infty}\frac{1}{n}\log\frac{\|A_n(\theta)\cdot
    v\|}{\|v\|}=0$ for $m$-a.e. $\theta\in\Theta$ and all $v\in \R^2$.
\end{thm}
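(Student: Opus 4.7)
The plan is to derive this statement as a direct consequence of Oseledets's multiplicative ergodic theorem (MET) applied to the invertible cocycle $(A_n)_{n\in\N}$ over the ergodic mpds $(\Theta,\mathcal{B},m,\base)$, as found for instance in Arnold~\cite{arnold:1998}. The integrability hypothesis $\log^+\|A^{\pm 1}\|\in L^1(m)$ required by the MET will be checked from the standing assumptions: since $A(\theta)\in\sltr$ the two singular values of $A(\theta)$ are reciprocal, so $\|A(\theta)^{-1}\|=\|A(\theta)\|$ and one integrability bound implies the other.

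Let $\lambda_1\geq\lambda_2$ denote the two Lyapunov exponents of the cocycle, counted with multiplicity, as provided by the MET. The first key observation is that $\det A_n(\theta)=1$ for every $n$ and every $\theta$, so
\[
\lambda_1+\lambda_2 \ = \ \lim_{n\to\infty}\tfrac 1n\log|\det A_n(\theta)| \ = \ 0 \ .
\]
The second observation is that $\lambda_1=\lambda_m(A)$. This is obtained by combining Kingman's subadditive ergodic theorem, applied to the subadditive sequence $\theta\mapsto\log\|A_n(\theta)\|$, which gives $\lim_n\tfrac 1n\log\|A_n(\theta)\|=\lambda_m(A)$ for $m$-a.e.\ $\theta$, with the MET statement that this same limit equals the top Lyapunov exponent $\lambda_1$. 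Together these yield $\lambda_1=\lambda_m(A)$ and $\lambda_2=-\lambda_m(A)$.

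With these identifications the result splits into two cases. If $\lambda_m(A)>0$ then $\lambda_1>\lambda_2$ are distinct, and the MET furnishes a measurable, $A$-equivariant splitting $\R^2=E^u(\theta)\oplus E^s(\theta)$ into one-dimensional Oseledets subspaces carrying the prescribed growth rates $\pm\lambda_m(A)$. A measurable choice of unit vectors $v^u(\theta),v^s(\theta)$ spanning these lines (possible because each $E^i(\theta)$ is one-dimensional and measurable) produces the vectors claimed in the statement. If $\lambda_m(A)=0$ then $\lambda_1=\lambda_2=0$, the Oseledets decomposition is trivial ($E(\theta)=\R^2$), and the MET directly gives $\lim_n\tfrac 1n\log\|A_n(\theta)v\|/\|v\|=0$ for every nonzero $v\in\R^2$ and $m$-a.e.\ $\theta$.

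The essential content of the argument is just this unpacking of the MET; there is no real obstacle. The only points requiring minor care are the verification of the integrability hypothesis (immediate from the $\sltr$-reciprocity of singular values combined with boundedness of $A$ under (R3)) and the measurability of the assignments $\theta\mapsto v^{u,s}(\theta)$, both of which are part of the standard formulation of the MET.
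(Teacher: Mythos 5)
Your proposal is correct and coincides with what the paper intends: the paper states this result without proof, explicitly as ``a corollary of Oseledets's Multiplicative Ergodic Theorem,'' and your argument is precisely the standard unpacking of that theorem (identifying $\lambda_1=\lambda_m(A)$ via Kingman and $\lambda_1+\lambda_2=0$ via $\det A_n=1$). The integrability and measurability checks you mention are exactly the right points of minor care, and both hold under the paper's standing assumptions.
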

Now, first assume $\lambda_{m}(A)>0$. Consider the function
$$p\ : \ \R^2_*\rightarrow \kreis\quad ,\quad p(v)\ = \ \frac{1}{\pi}\arctan\left(
  \frac{v_2}{v_1}\right) \bmod 1\ .$$ Then define functions
$\phi_{i}:\Theta\rightarrow\kreis$, by
$\phi_i(\theta)=p(v^i(\theta))$, $i=u,s$, where the
$v^i:\Theta\to\R^2_*$ are as in Theorem \ref{oseledets}. Obviously,
since these graphs correspond to the directions of the invariant
splitting and $g$ is the projective action of the cocycle $(\base,A)$,
they are $(g,m)$-invariant. Further, $\phi_u$ is attracting and $\phi_s$ is
repelling. We summarise these observations in the following folklore
lemma.
\begin{lemma}\label{phi_invgraphs}
  Let $m\in\cM(\base)$ and $\lambda_m(A)>0$. Then the functions
  $\phi_i$, $i=s,u$, are $(g,m)$-invariant graphs and for $m$-a.e.\
  $\theta\in\Theta$ and all $\alpha\neq \phi_s(\theta)$ we have
  \begin{equation}\label{e.graph-attraction}
    \nLim d\left(g^n_\theta(\alpha),\phi_u(\base^n\theta)\right) \ = \ 0 \ ,
  \end{equation}
  with exponential speed of convergence.  In particular, no other
  $(g,m)$-invariant graphs except $\phi_u$ and $\phi_s$ exist.
\end{lemma}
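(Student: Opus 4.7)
The invariance of $\phi_u$ and $\phi_s$ is built into the construction: by Theorem~\ref{oseledets}, the Oseledets lines satisfy $A(\theta)E^i(\theta)=E^i(\gamma\theta)$ for $m$-a.e.\ $\theta$, and by definition the fibre map $g_\theta$ is precisely the projective action of $A(\theta)$, i.e.\ $p(A(\theta)v)=g_\theta(p(v))$ for every $v\in\R^2_*$. Applying $p$ to the Oseledets identity therefore yields $g_\theta(\phi_i(\theta))=\phi_i(\gamma\theta)$ for $m$-a.e.\ $\theta$ and $i\in\{s,u\}$, so both functions are $(g,m)$-invariant graphs.

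For the attraction statement I would fix a full-measure set $\Theta_1\ssq\Theta$ on which the conclusions of Theorem~\ref{oseledets} hold simultaneously for vectors in $E^u(\theta)$ and $E^s(\theta)$, and pick $\theta\in\Theta_1$ and $\alpha\neq\phi_s(\theta)$. Choose a unit vector $v\in\R^2$ with $p(v)=\alpha$ and decompose $v=c_sv^s(\theta)+c_uv^u(\theta)$ along the Oseledets splitting. Since $\alpha\neq\phi_s(\theta)$ we have $c_u\neq0$, hence
\[
A_n(\theta)v \ = \ c_s A_n(\theta)v^s(\theta)+c_u A_n(\theta)v^u(\theta),
\]
where the first summand has norm $\sim e^{-n\lambda_m(A)}$ and the second $\sim e^{n\lambda_m(A)}$. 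The angle between $A_n(\theta)v$ and $E^u(\gamma^n\theta)$ is therefore bounded by a constant multiple of the ratio of these norms, i.e.\ by $C\cdot e^{-2n\lambda_m(A)}$ for sufficiently large $n$. Since $g_\theta^n(\alpha)=p(A_n(\theta)v)$ and $\phi_u(\gamma^n\theta)=p(v^u(\gamma^n\theta))$, and $p$ is Lipschitz on directions bounded away from the singularity (which is irrelevant here because we measure distance on $\kreis$), this yields exponential convergence
\[
d\bigl(g_\theta^n(\alpha),\phi_u(\gamma^n\theta)\bigr)\ \leq\ C(\theta)\cdot e^{-2n\lambda_m(A)}.
\]

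For the uniqueness claim, let $\phi$ be any $(g,m)$-invariant graph. Either $\phi=\phi_s$ $m$-a.s., or the set $\{\theta\mid \phi(\theta)\neq\phi_s(\theta)\}$ has positive measure and hence, by $\gamma$-invariance of that set and ergodicity of $m$, full measure. In the latter case the attraction result above applied with $\alpha=\phi(\theta)$ gives, using $g_\theta^n(\phi(\theta))=\phi(\gamma^n\theta)$,
\[
d\bigl(\phi(\gamma^n\theta),\phi_u(\gamma^n\theta)\bigr)\ \nKonv\ 0 \quad\textrm{for } m\textrm{-a.e.\ }\theta.
\]
Since $\gamma$ preserves $m$, the random variable $\xi(\theta):=d(\phi(\theta),\phi_u(\theta))$ has the same distribution as $\xi\circ\gamma^n$; the pointwise convergence $\xi\circ\gamma^n\to 0$ then forces $\xi=0$ $m$-a.e., so $\phi=\phi_u$ $m$-a.s. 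The main (but minor) subtlety is this last measure-theoretic step; everything else is a direct translation of Oseledets's theorem to projective coordinates.
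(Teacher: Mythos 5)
Your proof is correct, and it fills in, in the standard way, an argument that the paper itself does not spell out: the paper merely remarks that the claim follows because the $\phi_i$ come from the Oseledets splitting and $g$ is the projective action, then labels the statement a ``folklore lemma'' without further detail. Your decomposition along $E^s\oplus E^u$, the ratio-of-norms estimate for the attraction, and the use of $g_\theta$-bijectivity plus ergodicity for the uniqueness step are exactly the ingredients one would expect.

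One small imprecision worth flagging: the bound $d(g_\theta^n(\alpha),\phi_u(\gamma^n\theta))\leq C(\theta)e^{-2n\lambda_m(A)}$ is slightly too strong as stated. Oseledets gives only the logarithmic asymptotics $\tfrac1n\log\|A_n(\theta)v^i(\theta)\|\to\pm\lambda_m(A)$, i.e.\ $\|A_n(\theta)v^i(\theta)\|=e^{\pm n\lambda_m(A)+o(n)}$, and in addition the angle between $E^s(\gamma^n\theta)$ and $E^u(\gamma^n\theta)$ may degrade subexponentially, so the ``constant multiple'' in your angle estimate is itself a subexponentially growing tempered quantity. The correct conclusion is $d(g_\theta^n(\alpha),\phi_u(\gamma^n\theta))\leq C_\eps(\theta)\,e^{-(2\lambda_m(A)-\eps)n}$ for every $\eps>0$, which is still ``exponential speed of convergence'' as the lemma asserts, so the overall argument is unaffected. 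Your final measure-theoretic step (using $\gamma$-invariance of $\{\phi\neq\phi_s\}$, ergodicity, and then the fact that $\xi\circ\gamma^n\to0$ a.e.\ with $\gamma$ measure-preserving forces $\xi=0$ a.e., e.g.\ via bounded convergence applied to $\int\xi\circ\gamma^n\,dm=\int\xi\,dm$) is sound; you might add the one-line observation that the set $\{\phi\neq\phi_s\}$ is $\gamma$-invariant mod $m$ because each $g_\theta$ is a bijection, which you use implicitly.
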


Furthermore, the associated random Dirac measures
$m_{\phi_i}(B):=m(\{\theta\in\Theta \mid (\theta,\phi_i(\theta))\in
B)$, $i=s,u$, are the only $g$-invariant and ergodic measures which
project to $m$. This follows from an old result by Furstenberg. In
order to state it, we denote the set of $g$-invariant ergodic measures
which project to $m\in\mathcal{M}(\base)$ by $\mathcal{M}_m(g)$.
\begin{lemma}[Furstenberg \cite{furstenberg:1961}]\label{lemmaB} Suppose
  $(\Theta,\mathcal{B},m,\base)$ is an ergodic {\em mpds} and $g
  :\Theta\times\kreis\to\Theta\times\kreis$ is a random map whose
  fibre maps $g_\theta$ are all circle homeomorphisms. Then, if there
  exists a $(g,m)$-invariant graph, all $\mu\in\cM_m(g)$ are of the
  form $\mu=m_\phi$ for some $(g,m)$-invariant graph $\phi$.
\end{lemma}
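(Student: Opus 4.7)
The plan is to use the invariant graph $\phi$ as a moving basepoint that cuts the circle bundle into an interval bundle, thereby reducing the claim to Theorem~\ref{t.measures}. I assume throughout that each fibre map $g_\theta$ is orientation-preserving: this is automatic for the projective action of an $\mathrm{SL}(2,\R)$-cocycle, which is the context in which the lemma is applied in this paper, and more generally it can be arranged by passing to $g^2$. Fix an ergodic $\mu\in\cM_m(g)$ with fibrewise disintegration $(\mu_\theta)_{\theta\in\Theta}$.

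First I would dispose of the degenerate case: if $\mu(\graph(\phi))>0$, then since $\graph(\phi)$ is $g$-invariant mod $m$-null sets, ergodicity of $\mu$ forces $\mu$ to be concentrated on $\graph(\phi)$, giving $\mu=m_\phi$. So from now on assume $\mu(\graph(\phi))=0$, equivalently $\mu_\theta(\{\phi(\theta)\})=0$ for $m$-a.e.\ $\theta$.

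The core step is to define a measurable family of orientation-preserving parametrisations $\iota_\theta:\kreis\smin\{\phi(\theta)\}\to(0,1)$ by $\iota_\theta(\alpha)=(\alpha-\phi(\theta))\bmod 1$. Since $g_\theta(\phi(\theta))=\phi(\base\theta)$ and $g_\theta$ preserves orientation, the conjugate $\tilde g_\theta:=\iota_{\base\theta}\circ g_\theta\circ \iota_\theta^{-1}$ extends uniquely to a strictly increasing homeomorphism of $[0,1]$ fixing both endpoints. Hence $\tilde g$ is a $\base$-forced monotone $\mathcal{C}^0$-interval map with compact fibres. The pushforward $\tilde\mu:=(\id_\Theta\times\iota)_*\mu$ is well defined because the cut locus has $\mu$-measure zero, and it is an ergodic $\tilde g$-invariant probability measure on $\Theta\times[0,1]$ projecting to $m$. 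Theorem~\ref{t.measures} then provides a $(\tilde g,m)$-invariant graph $\tilde\psi:\Theta\to[0,1]$ with $\tilde\mu=m_{\tilde\psi}$, and pulling back via $\iota^{-1}$ yields the $(g,m)$-invariant graph
\[
\phi'(\theta)\ :=\ \iota_\theta^{-1}(\tilde\psi(\theta))\ =\ \phi(\theta)+\tilde\psi(\theta)\bmod 1,
\]
with $\mu=m_{\phi'}$, as required.

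The one real obstacle is the orientation hypothesis: the set $\{\theta:g_\theta\text{ preserves orientation}\}$ is not in general $\base$-invariant, so reducing a fully general statement to the orientation-preserving case takes a little care (using that $g^2_\theta=g_{\base\theta}\circ g_\theta$ always preserves orientation on any fibre where the two factors have the same orientation type). Beyond that, the argument is a direct translation of the problem through the measurable conjugacy induced by $\phi$: the measurability of $\iota$ and $\tilde g$ in $\theta$ follows from the measurability of $\phi$, and the strictness of the monotonicity together with the fixing of the endpoints is precisely what makes Theorem~\ref{t.measures} applicable.
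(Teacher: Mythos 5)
The paper gives no proof of this lemma; it simply cites Furstenberg, and the reader is expected to supply (or look up) the argument. Your proof therefore cannot be compared with a proof in the paper, but it is worth evaluating on its own terms, and it is essentially correct and natural: cutting the circle bundle along the given invariant graph $\phi$ turns the fibre maps into homeomorphisms of a compact interval fixing both endpoints, so that Theorem~\ref{t.measures} applies to the conjugated system, and pulling the resulting invariant graph back through the cut produces the desired $\phi'$ with $\mu=m_{\phi'}$. The treatment of the degenerate case $\mu(\graph(\phi))>0$ via ergodicity is right, and the pushforward $\tilde\mu$ is well defined and ergodic precisely because the cut locus is $\mu$-null in the remaining case.

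One point deserves to be sharpened, because you treat it as a technical nuisance when in fact it is essential. Without the assumption that the fibre maps preserve orientation, the lemma is simply \emph{false}, not merely harder to reduce. Take $\Theta$ a single point and $g(\alpha)=-\alpha\pmod 1$: this is a circle homeomorphism, it has the fixed point $0$ (so a $(g,m)$-invariant graph exists), yet $\tfrac12\bigl(\delta_{1/4}+\delta_{3/4}\bigr)$ is an ergodic invariant measure which is not $m_\phi$ for any invariant graph. So there is no ``little care'' by which one salvages the general statement; the orientation-preserving hypothesis is a genuine hypothesis, implicit in the paper because $g$ is always the projective action of an $\mathrm{SL}(2,\R)$-cocycle. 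Once you state that hypothesis explicitly, your proof is complete and the passage-to-$g^2$ remark can be dropped entirely, which is cleaner. In short: correct proof, but make the orientation-preservation an explicit standing assumption rather than an inconvenience to be engineered around.
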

\medskip

The crucial observation of this section is the following.

\begin{proposition}\label{mainlyapunovlemma} Suppose $(f_\beta)_{\beta\in[0,1]}$
  satisfies (D1)--(D3). Let $\beta_1=e^{-\lmax}$ and $\beta_2=e^{\lmax}$.
  Then
\begin{eqnarray*}
  \beta_1 &= & \sup\left\{\beta\in\R^+ \mid  \lambda_\mu(F_\beta,0)<0\
  \forall \ \mu\in\cM(g)\right\}\quad \textrm{ and}\\
  \beta_2& = & \inf\left\{\beta\in\R^+\mid
  \lambda_\mu(F_\beta,0)>0\ \forall \ \mu\in\cM(g)\right\} \ ,
\end{eqnarray*}
 where
  $\lambda_\mu(F_\beta,0)=\int_{\Theta\times\kreis} \log
  F_{\beta,\theta,\alpha}'(0)d\mu(\theta,\alpha)$.
\end{proposition}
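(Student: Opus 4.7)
The plan is to reduce the statement to a computation of the extrema of $\int\log\Omega\,d\mu$ over $\mu\in\cM(g)$ and then identify those extrema with $\pm\lmax(A)$. Since $F_{\beta,\theta,\alpha}(r)=h(\beta r)\Omega(\theta,\alpha)$ and $h'(0)=1$, differentiating at $r=0$ gives $F_{\beta,\theta,\alpha}'(0)=\beta\Omega(\theta,\alpha)$, and hence
\begin{equation*}
  \lambda_\mu(F_\beta,0) \ = \ \log\beta + \int_{\Theta\times\kreis}\log\Omega\,d\mu \qquad \textrm{for every } \mu\in\cM(g) \ .
\end{equation*}
Unravelling the definitions of $\beta_1$ and $\beta_2$ in the statement, the proposition is therefore equivalent to
\begin{equation*}
  \sup_{\mu\in\cM(g)}\int\log\Omega\,d\mu \ = \ \lmax(A)
  \quad\textrm{and}\quad
  \inf_{\mu\in\cM(g)}\int\log\Omega\,d\mu \ = \ -\lmax(A) \ .
\end{equation*}

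The ``easy'' inequalities follow directly from \eqref{eq:Omega_n}. Writing $\Phi_n(\theta,\alpha) := \log\|A_n(\theta)(\cos\pi\alpha,\sin\pi\alpha)\|$, the $g$-invariance of $\mu$ gives $\int\log\Omega\,d\mu = \tfrac{1}{n}\int\Phi_n\,d\mu$ for every $n$. The pointwise bound $\Phi_n(\theta,\alpha)\leq\log\|A_n(\theta)\|$ combined with \eqref{eq:lmax-sup} yields $\int\log\Omega\,d\mu \leq \lambda_m(A) \leq \lmax(A)$, where $m\in\cM(\base)$ denotes the $\Theta$-projection of $\mu$. The corresponding lower bound $\int\log\Omega\,d\mu \geq -\lmax(A)$ follows symmetrically from $A_n(\theta)\in\sltr$, which forces $\|A_n(\theta)^{-1}\| = \|A_n(\theta)\|$ and hence $\Phi_n(\theta,\alpha) \geq -\log\|A_n(\theta)\|$.

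For sharpness, apply \eqref{eq:lmax-sup} to pick $m^*\in\cM(\base)$ with $\lambda_{m^*}(A) = \lmax(A)$. In the hyperbolic case $\lmax(A)>0$, Theorem~\ref{oseledets} together with Lemma~\ref{phi_invgraphs} produces $(g,m^*)$-invariant graphs $\phi_u,\phi_s:\Theta\to\kreis$ encoding the unstable and stable Oseledets directions, and the associated random Dirac measures $\mu^{u/s} := m^*_{\phi_{u/s}}$ lie in $\cM(g)$. Using the $g$-invariance of $\phi_u$, identity \eqref{eq:Omega_n} rewrites the Birkhoff average of $\log\Omega$ along $(\theta,\phi_u(\theta))$ as $\tfrac{1}{n}\log\|A_n(\theta)v^u(\theta)\|$, whose limit as $n\to\infty$ equals $\lmax(A)$ by Oseledets; the analogous computation along $\phi_s$ delivers $\int\log\Omega\,d\mu^s = -\lmax(A)$. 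In the non-hyperbolic case $\lmax(A)=0$, the inequality $\lambda_m(A)\geq 0$ valid for any $\sltr$-cocycle combines with the two bounds from the previous paragraph to pin $\int\log\Omega\,d\mu = 0$ for every $\mu\in\cM(g)$, so both extrema vanish trivially. The main delicate point is the matching between the invariant graph $\phi_u$ of the projective action $g$ and the Oseledets direction field $v^u$, and the resulting Birkhoff identity $\int\log\Omega\,d\mu^u = \lmax(A)$; once the Oseledets splitting is in hand this is essentially bookkeeping, but it has to be carried out separately in the hyperbolic and degenerate regimes.
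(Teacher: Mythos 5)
Your proof is correct, and it takes a genuinely different route from the paper's in one of its two halves. The paper deduces the proposition from Lemma~\ref{lemmaA}, which first invokes Furstenberg's theorem (Lemma~\ref{lemmaB}, via Lemma~\ref{phi_invgraphs}) to show that for $\lambda_m(A)>0$ the \emph{only} measures in $\cM_m(g)$ are the two random Dirac measures $m_{\phi_u}$ and $m_{\phi_s}$, and then computes $\lambda_\mu(F_\beta,0)=\log\beta\pm\lambda_m(A)$ exactly for each of them via Oseledets and Birkhoff; the proposition then follows from \eqref{eq:lmax-sup}. You replace the classification step by the elementary two-sided estimate $\|A_n(\theta)\|^{-1}\leq\|A_n(\theta)v\|\leq\|A_n(\theta)\|$ for unit $v$ (valid since $\det A_n=1$), which via \eqref{eq:Omega_n} gives $-\lambda_m(A)\leq\int\log\Omega\,d\mu\leq\lambda_m(A)$ for \emph{every} $\mu\in\cM_m(g)$ without identifying those measures; your sharpness half (realizing $\pm\lmax(A)$ by the Dirac measures on the Oseledets graphs over a maximizing $m^*$) coincides with the computation inside Lemma~\ref{lemmaA}. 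What your route buys is independence from Furstenberg's theorem -- the argument would survive even if $\cM_m(g)$ contained further invariant measures; what it gives up is the exact value of $\lambda_\mu(F_\beta,0)$ for each individual $\mu$, which the paper extracts from Lemma~\ref{lemmaA} and reuses later (e.g.\ in Lemmas~\ref{l.intermediate_upperbounding}, \ref{l.Prop(a)} and \ref{l.Prop(c)}). Two minor points you gloss over but which are harmless: passing from the a.e.\ limit of the ergodic averages of $\log\Omega$ along $\phi_u$ to the integral against $m^*$ requires Birkhoff's theorem (integrability is automatic since $\Omega$ is continuous and bounded away from $0$ on the compact space $\Theta\times\kreis$), and the existence of a maximizing $m^*$ is the compactness/upper-semicontinuity fact already recorded in Remark~\ref{r.random-hopf}(d) -- though, as your argument only needs the supremum and not its attainment, even this could be dispensed with.
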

\smallskip

In order to prove this, we show the following more general statement.
Note that when $\lambda_m(A)>0$, then Lemma~\ref{phi_invgraphs} and
the subsequent remark imply $\cM_m(g)=\{m_{\phi_u},m_{\phi_s}\}$.
\begin{lemma}\label{lemmaA} Let $\mu\in \cM_m(g)$. If $\lambda_m(A)>0$, then
  $\lambda_\mu(F_\beta,0)=\lambda_m(A)+\log\beta$ if $\mu=m_{\phi_u}$
  and $\lambda_\mu(F_\beta,0)=-\lambda_m(A)+\log\beta$ if
  $\mu=m_{\phi_s}$. If $\lambda_m(A)=0$, then
  $\lambda_\mu(F_\beta,0)=\log\beta$.
\end{lemma}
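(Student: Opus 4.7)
The plan is to reduce the computation of the invariant-graph Lyapunov exponent to an Oseledets-type statement for the cocycle $A$ by exploiting the identity (\ref{eq:Omega_n}) and Birkhoff's ergodic theorem.

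First I would differentiate $F_{\beta,\theta,\alpha}(r)=h(\beta r)\Omega(\theta,\alpha)$ to obtain $F'_{\beta,\theta,\alpha}(0)=\beta\, h'(0)\,\Omega(\theta,\alpha)=\beta\,\Omega(\theta,\alpha)$, using assumption (R2). Hence
\[
\lambda_\mu(F_\beta,0)\ =\ \log\beta\ +\ \int_{\Theta\times\kreis}\log\Omega(\theta,\alpha)\,d\mu(\theta,\alpha).
\]
Since $A$ takes values in $\mathrm{SL}(2,\R)$ and is bounded by (R3), the estimate $\|A(\theta)^{-1}\|^{-1}\leq\Omega(\theta,\alpha)\leq\|A(\theta)\|$ shows that $\log\Omega$ is bounded, hence in $L^1(\mu)$, so the integral is well-defined and Birkhoff's ergodic theorem applies.

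Next I would invoke (\ref{eq:Omega_n}), which rewrites the Birkhoff sums of $\log\Omega$ under $g$ as
\[
\frac{1}{n}\sum_{k=0}^{n-1}\log\Omega\circ g^k(\theta,\alpha)\ =\ \frac{1}{n}\log\|A_n(\theta)v(\alpha)\|,
\]
where I set $v(\alpha)=(\cos\pi\alpha,\sin\pi\alpha)$ (unit vector). Since $\mu$ is $g$-ergodic, the left-hand side converges $\mu$-a.s.\ to $\int\log\Omega\,d\mu$. It remains to evaluate the right-hand side in each of the three cases using Theorem~\ref{oseledets}.

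If $\lambda_m(A)=0$, then $n^{-1}\log\|A_n(\theta)v\|\to0$ for $m$-a.e.\ $\theta$ and every $v\in\R^2\smin\{0\}$; since $\mu$ projects to $m$ and $v(\alpha)\neq0$ everywhere, the limit vanishes $\mu$-a.s., giving $\lambda_\mu(F_\beta,0)=\log\beta$. If $\lambda_m(A)>0$, then by Lemma~\ref{phi_invgraphs} (and Furstenberg's Lemma~\ref{lemmaB}) we have $\mathcal{M}_m(g)=\{m_{\phi_u},m_{\phi_s}\}$. For $\mu=m_{\phi_u}$, the disintegration yields $\alpha=\phi_u(\theta)$ $m$-a.s., so $v(\alpha)$ is a unit vector in the Oseledets subspace $E^u(\theta)$ and the Oseledets theorem gives the limit $\lambda_u=\lambda_m(A)$; analogously, for $\mu=m_{\phi_s}$ the vector $v(\alpha)$ lies in $E^s(\theta)$ and the limit equals $\lambda_s=-\lambda_m(A)$. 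Combining these with the identity for $\lambda_\mu(F_\beta,0)$ derived above yields the three claimed formulae.

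The only subtle point is the matching of the projective graphs $\phi_u,\phi_s$ with the Oseledets directions: one must check that $p(v^u(\theta))$ and $p(v^s(\theta))$ really are the graphs $\phi_u,\phi_s$ appearing in $\mathcal{M}_m(g)$. This is already built into the construction preceding Lemma~\ref{phi_invgraphs}, so no new work is required. Apart from this bookkeeping, the proof is essentially a direct application of Birkhoff and Oseledets.
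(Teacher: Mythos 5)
Your proposal is correct and follows essentially the same route as the paper: compute $F'_{\beta,\theta,\alpha}(0)=\beta\,\Omega(\theta,\alpha)$, reduce to $\int\log\Omega\,d\mu$, and identify that integral with the relevant Oseledets exponent via the identity \eqref{eq:Omega_n} together with Birkhoff's ergodic theorem, splitting into the cases $\mu=m_{\phi_u}$, $\mu=m_{\phi_s}$ and $\lambda_m(A)=0$. The only (harmless) addition is your explicit check that $\log\Omega$ is integrable, which the paper leaves implicit.
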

\begin{proof}
  First, let $\lambda_m(A)>0$. By Lemmas~\ref{phi_invgraphs} and~\ref{lemmaB}, $\mu=m_{\phi_i}$
  with $i\in\{s,u\}$. Fix $\theta\in\Theta$ and let
  $\alpha=p(v^i(\theta))$, where $v^u(\theta)$ and $v^s(\theta)$ are
  chosen as in Theorem~\ref{oseledets}. We have that
  $F_{\beta,\theta,\alpha}'(r)=\beta h'(\beta r) \Omega(\theta,\alpha)$ and
  thus $F_{\beta,\theta,\alpha}'(0)=\beta \Omega(\theta,\alpha)$. Hence
\begin{eqnarray}\label{eqn_lemmaA}
  \lambda_\mu(F_\beta,0)&=& \int_{\Theta\times\kreis}\log(\beta\cdot \Omega(\theta,\alpha))
  d\mu(\theta,\alpha)=\log\beta+\int_{\Theta\times\kreis}\log \Omega(\theta,\alpha) d\mu(\theta,\alpha)
  \nonumber\\
  &=& \log\beta +\int_\Theta\log \Omega(\theta,\phi_i(\theta)) dm(\theta) \   .
\end{eqnarray}
Now, by Theorem \ref{oseledets} and equation ({\ref{eq:Omega_n}}),
\begin{eqnarray}
\lambda_i &=& \lim_{n\rightarrow\infty}\frac{1}{n}\log
\frac{\|A_n(\theta) v^i(\theta)\|}{\|v^i(\theta)\|} \ = \ \lim_{n\rightarrow\infty}\frac{1}{n}\log \nonumber
  \prod_{k=0}^{n-1}\Omega\circ g^k(\theta,\phi^i(\theta)) \\ & = &
  \lim_{n\rightarrow\infty}\frac{1}{n}\sum_{k=0}^{n-1}\log \Omega(\base^k\theta,\phi_i(\base^k\theta)))\
= \  \int
  \log \Omega(\theta,\phi_i(\theta))d m(\theta)
  \end{eqnarray}
%
for $m$-a.e. $\theta\in\Theta$ by Birkhoff's Ergodic Theorem. Therefore, equation
(\ref{eqn_lemmaA}) becomes:
$$\lambda_\mu(F_\beta,0)=\log\beta+\lambda_i=\left\{ \begin{array}{ll}
 \log\beta+\lambda_m(A) & \textnormal{for } i=u\\
 \log\beta-\lambda_m(A) & \textnormal{for } i=s
 \end{array}\right.$$
which proves the lemma for $\lambda_m(A)>0$.\smallskip

Now, assume $\lambda_m(A)=0$. For $\mu$-a.e.\ $(\theta,\alpha)$ we
have
$$\lambda_\mu(F_\beta,0)\ =\ \lim_{n\rightarrow\infty}\frac{1}{n}\log(F_{\beta,\theta,\alpha}^n)'(0)\ .$$
Let $v=(\cos(\pi\alpha),\sin(\pi\alpha))$ and
$\Omega(\theta,v)=\frac{\|A(\theta)v\|}{\|v\|}$. Similar to above, we
obtain
\begin{eqnarray*}
  \lefteqn{\nLim \ntel\log\left(F^n_{\beta,\theta,\alpha}\right)'(0) \ = \ \nLim \ntel \knergsum
    \log F'_{\beta,g^k(\theta,\alpha)}(0) } \\
  & = & \log\beta + \nLim \ntel \knergsum \log b\circ g^k(\theta,\alpha) \ = \
  \log\beta + \nLim \ntel\log\frac{\|A_n(\theta)v\|}{\|v\|} \ = \ \log \beta \ .
\end{eqnarray*}
Hence, $\lambda_\mu(F_\beta,0) = \log\beta$ as claimed.
\end{proof}
We are now in position to prove Proposition \ref{mainlyapunovlemma}.
\begin{proof}[Proof of Proposition \ref{mainlyapunovlemma}]
%
%
Observing Lemma~\ref{lemmaA} and equation~(\ref{eq:lmax-sup}), we have
\begin{eqnarray*}
\sup\{\beta\in\R^+\mid \lambda_\mu(F_\beta,0)<0\
\forall \ \mu\in\cM(g)\}
&=&
\sup\{\beta\in\R^+\mid \log\beta + \lambda_m(A)<0\
\forall \ m\in\cM(\base)\}\\
&=&
\sup\{\beta\in\R^+\mid \log\beta + \lambda_{\max}(A)<0\}\\
&=&
e^{-\lambda_{\max}(A)}=\beta_1\ .
\end{eqnarray*}
\end{proof}

\section{Deterministic forcing: Proof of
  Theorem~\ref{theorem_topological}}\label{top.setting}

We first analyse the skew product system $F_\beta$ in the two
parameter regimes $\beta<\beta_1$ and $\beta>\beta_2$. Application
of the results to the original system $f_\beta$ will then be
straightforward. As mentioned in Remark~\ref{r.random-hopf}(d),
statement (b) of Theorem~\ref{theorem_topological} on the
intermediate parameter region $\beta_1<\beta<\beta_2$ is a direct
consequence of the results on random forcing, such that we do not
need to consider this case here.

Throughout this section, we assume that $(f_\beta)_{\beta\in\R^+}$ satisfies
(D1)--(D3). In particular, $\Theta$ is a compact metric space and
$\gamma:\Theta\to\Theta$ is a homeomorphism. Since due to (\ref{e.scaling}) we
have $F_{\beta,\theta,\alpha}(1)\leq 1$ for all $\beta,\theta$ and $\alpha$, the
global attractor of $F_\beta$ is given by
\begin{equation}\label{e.doubleskewattractor}
\tilde{\mathcal{A}}_\beta \ = \ \ncap F^n_\beta\left(\Theta\times\kreis\times[0,1]\right) \ .
\end{equation}
Due to the monotonicity of the fibre maps $F_{\beta,\theta,\alpha}$, an
invariant graph $\psi^+_\beta$ can be defined as
\begin{equation}\label{e.upperboundinggraph}
\psi^+_\beta(\theta,\alpha) \ = \ \sup \tilde{\mathcal{A}}_\beta(\theta,\alpha)
\ = \ \nLim F^n_{\beta,g^{-n}(\theta,\alpha)}(1) \ .
\end{equation}
We call $\psi^+_\beta$ the {\em upper bounding graph} of $F_\beta$. Note that
$\tilde{\mathcal{A}}_\beta = \left\{(\theta,\alpha,r) \mid
r\in\left[0,\psi^+_\beta(\theta,\alpha)\right]\right\}$. Independent of $\beta$, a second
invariant graph is always given by $\psi^-(\theta,\alpha)=0$. Depending on
$\beta$, we may or may not have $\psi^+_\beta=\psi^-$. \medskip

 {\bf The case $\beta<\beta_1$.} This is
the simpler of the two cases, where, as it will be shown, $\psi^-=\psi^+_\beta$
is the only invariant graph of the system.

\begin{prop}\label{p.doubleskew.case1}
  Suppose $\beta < \beta_1$.  Then the global attractor
  $\tilde{\mathcal{A}}_\beta$ is equal to
  $\Theta\times\kreis\times\{0\}$. In particular, $\psi^-$ is the
  unique invariant graph of the system, all invariant measures are
  supported on $\Theta\times\kreis\times\{0\}$ and
  \[
  \lim_{n\rightarrow\infty}F^n_{\beta,\theta,\alpha}(r)\ =\ 0\quad \textrm{for
    all } (\theta,\alpha,r)\in\Theta\times\kreis\times[0,\infty) \ .
  \]
\end{prop}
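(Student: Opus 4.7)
The strategy is to exploit strict concavity of $h$ to dominate $F_\beta$ by its linearisation at the zero graph, and then to upgrade the pointwise bound on Lyapunov exponents coming from Proposition~\ref{mainlyapunovlemma} to a uniform exponential contraction via the semiuniform ergodic theorem (Theorem~\ref{Semi-uniform ergodic}).

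Since $h$ is strictly concave with $h(0)=0$ and $h'(0)=1$, we have $h(x)\le x$ for all $x\ge 0$, so $F_{\beta,\theta,\alpha}(r)\le \beta\,\Omega(\theta,\alpha)\,r$. Iterating and applying (\ref{eq:Omega_n}) gives
\begin{equation} \label{e.domination-case1}
F^n_{\beta,\theta,\alpha}(r)\ \le\ r\cdot \beta^n\prod_{k=0}^{n-1}\Omega\bigl(g^k(\theta,\alpha)\bigr)\ =\ r\cdot \beta^n\bigl\|A_n(\theta)(\cos\pi\alpha,\sin\pi\alpha)\bigr\|.
\end{equation}
The functions $\Phi_n(\theta,\alpha):= n\log\beta+\sum_{k=0}^{n-1}\log\Omega\circ g^k(\theta,\alpha)$ are continuous on the compact metric space $\Theta\times\kreis$ and form an additive (hence subadditive) cocycle over the continuous map $g$. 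For every $g$-invariant probability measure $\mu$, Birkhoff's theorem identifies $\overline{\Phi}_\mu$ with $\log\beta+\int\log\Omega\,d\mu=\lambda_\mu(F_\beta,0)$, and Lemma~\ref{lemmaA} together with (\ref{eq:lmax-sup}) gives $\sup_{\mu\in\cM(g)}\lambda_\mu(F_\beta,0)=\log\beta+\lmax(A)$, which is strictly negative exactly when $\beta<\beta_1=e^{-\lmax}$.

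Choosing any $\lambda\in(\log\beta+\lmax(A),0)$, Theorem~\ref{Semi-uniform ergodic} furnishes $\varepsilon>0$ and $n_0\in\N$ with $\Phi_n(\theta,\alpha)\le n(\lambda-\varepsilon)$ for every $n\ge n_0$ and every $(\theta,\alpha)$. Inserted into (\ref{e.domination-case1}) this yields $F^n_{\beta,\theta,\alpha}(r)\le r\,e^{n(\lambda-\varepsilon)}$ uniformly, so the intersection in (\ref{e.doubleskewattractor}) collapses to $\Theta\times\kreis\times\{0\}$. Consequently $\psi^+_\beta\equiv 0$, and every invariant graph, being sandwiched between $\psi^-\equiv 0$ and $\psi^+_\beta$, must itself vanish; every $F_\beta$-invariant probability measure is supported on $\Theta\times\kreis\times[0,1]$ by (\ref{e.scaling}), hence on $\tilde{\mathcal{A}}_\beta$. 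Pointwise convergence $F^n_{\beta,\theta,\alpha}(r)\to 0$ for arbitrary $r\ge 0$ follows from the uniform bound together with the fact that (\ref{e.scaling}) drives all orbits into $[0,1]$ after a single step.

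The one genuinely non-routine ingredient is the passage from pointwise negativity of the Lyapunov exponents $\lambda_\mu(F_\beta,0)$ to a uniform exponential decay rate in $(\theta,\alpha)$. This is exactly what Theorem~\ref{Semi-uniform ergodic} is designed for, and its hypotheses (compact base, continuous subadditive cocycle) are automatic under (D1)--(D3), so no serious obstacle arises.
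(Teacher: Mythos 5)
Your proof is correct and follows essentially the same route as the paper: dominate $F^n_{\beta,\theta,\alpha}(r)$ by $r\cdot(F^n_{\beta,\theta,\alpha})'(0)$ using concavity, observe that the resulting additive cocycle has negative average against every $g$-invariant measure (which is exactly Proposition~\ref{mainlyapunovlemma}, here re-derived from Lemma~\ref{lemmaA} and (\ref{eq:lmax-sup})), and apply Theorem~\ref{Semi-uniform ergodic} to get uniform exponential decay. The only differences are cosmetic (the bound $h(x)\le x$ in place of the Mean Value Theorem phrasing).
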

\proof From the concavity of the fibre maps $F_{\beta,\theta,\alpha}$ and
  the Mean Value Theorem we obtain that $F_{\beta,\theta,\alpha}^n(r)\leq
  (F_{\beta,\theta,\alpha}^n)'(0) \cdot r$ for all $r\in\R^+$. We claim that
  $(F_{\beta,\theta,\alpha}^n)'(0)\rightarrow 0$ as
  $n\rightarrow\infty$. Consider the additive sequence of continuous functions
  $\Phi_n:\Theta\times\kreis\rightarrow \R$, defined by
  \[
  \Phi_n(\theta,\alpha)\ =\ \sum_{i=0}^{n-1}\log
  F_{\beta,g^i(\theta,\alpha)}'(0)\ =\ \log(F^n_{\beta,\theta,\alpha})'(0) \ .
  \]
  $\Phi_n$ satisfies the assumptions of Theorem~\ref{Semi-uniform
    ergodic} for $T=g$ and $\lambda=0$, since
  $\int_{\Theta\times\kreis} \Phi_1\ d\mu=\lambda(F_\beta,0) <0$ for
  all $\mu\in\cM(g)$ by Proposition \ref{mainlyapunovlemma}.
  Hence, there exists $\varepsilon>0$ and $n_0\in\N$ such that for all
  $n\geq n_0$ and all $(\theta,\alpha)\in\Theta\times\kreis$ we have
  $\frac{1}{n}\log (F_{\beta,\theta,\alpha}^n)'(0) \leq -\varepsilon$,
  that is, $(F_{\beta,\theta,\alpha}^n)'(0) \leq e^{-\varepsilon\cdot
    n}\rightarrow 0$ as $n\rightarrow\infty$. As this convergence is
  uniform in $\theta$ and $\alpha$, the statements of the
  proposition follow immediately.  \qed\medskip

\noindent {\bf The case $\beta>\beta_2$.} Here, the aim is to prove the
continuity and strict positivity of $\psi^+_\beta$, whose preimage under the
projection $\hat P$ then defines the split-off torus for the original system
$f_\beta$. We start with an auxiliary lemma.

\begin{lemma}\label{l.doubleskew}
There exists $\delta_0>0$ such that for all
$0<\delta<\delta_0$ there exists an $F_\beta$-forward invariant
compact set $K$ with
$$\Theta\times\kreis\times[\delta,1]\ \subseteq \ K
\subseteq\Theta\times\kreis\times(0,1] \ , $$ and such that
$K(\theta,\alpha)=\{r\in\R^+\mid (\theta,\alpha,r)\in K\}$ is an
interval for all $\theta\in\Theta$.
\end{lemma}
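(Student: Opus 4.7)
The main tool is a uniform expansion estimate at the zero graph. Since $\beta>\beta_2$, Proposition~\ref{mainlyapunovlemma} gives $\lambda_\mu(F_\beta,0)>0$ for every $\mu\in\cM(g)$; by continuity of $\mu\mapsto\int\log(\beta\Omega)\,d\mu$ in the weak-$*$ topology and compactness of $\cM(g)$, the infimum is some $\lambda_0>0$. The plan is to apply the semiuniform ergodic theorem (Theorem~\ref{Semi-uniform ergodic}) to the (additive, hence subadditive) cocycle
\[
-\Phi_n(\theta,\alpha)\ =\ -\log(F^n_{\beta,\theta,\alpha})'(0)\ =\ -\sum_{i=0}^{n-1}\log\bigl(\beta\,\Omega(g^i(\theta,\alpha))\bigr),
\]
noting $\overline{(-\Phi)}_\mu=-\lambda_\mu(F_\beta,0)\leq-\lambda_0<-\lambda_0/2$. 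This yields $\varepsilon'>0$ and $n_0\in\N$ with $\Phi_n(\theta,\alpha)\geq\varepsilon' n$ for all $n\geq n_0$ and all $(\theta,\alpha)\in\Theta\times\kreis$.

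Using $h\in\mathcal{C}^2$ with $h(0)=0$ and $h'(0)=1$, I would then pick $\varepsilon_1>0$ with $(1-\varepsilon_1)\,e^{\varepsilon'}>1$, and choose $\delta_0>0$ small enough that $h(x)\geq(1-\varepsilon_1)x$ whenever $x\leq\beta\,\delta_0$. Fix $\delta\in(0,\delta_0)$. While a forward orbit of $F_\beta$ stays inside $[0,\delta_0]$, iterating $F_{\beta,\theta,\alpha}(r)\geq(1-\varepsilon_1)\beta r\,\Omega(\theta,\alpha)$ gives $F^n_{\beta,\theta,\alpha}(r_0)\geq r_0(1-\varepsilon_1)^n e^{\Phi_n(\theta,\alpha)}$, which for $n\geq n_0$ is at least $r_0[(1-\varepsilon_1)e^{\varepsilon'}]^n$ and hence grows without bound. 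Thus the orbit must exit $[0,\delta_0]$ in uniformly bounded time, and the next iterate is at least $c_1:=h(\beta\delta_0)\,\Omega_{\min}>0$, where $\Omega_{\min}=\min\Omega>0$ by compactness of $\Theta\times\kreis$ together with invertibility of $A(\theta)\in\sltr$. Splicing the linear-regime estimate with this constant bound to handle possible later re-entries into $[0,\delta_0]$ produces a constant $c_0=c_0(\delta)>0$ such that
\[
F^n_{\beta,\theta,\alpha}(r_0)\ \geq\ c_0\quad\textrm{for all}\ n\geq 0,\ (\theta,\alpha)\in\Theta\times\kreis\ \textrm{and}\ r_0\geq\delta.
\]

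To build $K$, let $W:=\overline{\bigcup_{n\geq 0}F_\beta^n\bigl(\Theta\times\kreis\times[\delta,1]\bigr)}$, which is compact, $F_\beta$-forward invariant, contains $\Theta\times\kreis\times[\delta,1]$ and, by the previous estimate, is contained in $\Theta\times\kreis\times[c_0,1]$. The fibres $W(\theta,\alpha)$ need not be intervals, so define
\[
K\ :=\ \bigl\{(\theta,\alpha,r)\mid \inf W(\theta,\alpha)\leq r\leq \sup W(\theta,\alpha)\bigr\}.
\]
Compactness of $W$ makes $\inf W$ lower and $\sup W$ upper semi-continuous, so $K$ is closed (hence compact) with interval fibres by construction. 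Since $W(\theta,\alpha)\supseteq[\delta,1]$, $K(\theta,\alpha)=[\inf W(\theta,\alpha),1]$ with $\inf W(\theta,\alpha)\in[c_0,\delta]$, giving $\Theta\times\kreis\times[\delta,1]\subseteq K\subseteq\Theta\times\kreis\times(0,1]$. The extrema $\inf W(\theta,\alpha)$ and $\sup W(\theta,\alpha)$ are attained in the compact set $W(\theta,\alpha)$, so forward $F_\beta$-invariance of $W$ combined with the monotonicity of the fibre maps yields $F_{\beta,\theta,\alpha}(\inf W(\theta,\alpha))\geq\inf W(g(\theta,\alpha))$ and $F_{\beta,\theta,\alpha}(\sup W(\theta,\alpha))\leq\sup W(g(\theta,\alpha))$, from which $F_\beta(K)\subseteq K$.

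The only delicate step is the uniform lower bound in the middle paragraph: the semiuniform ergodic theorem only controls the linearized dynamics at $r=0$, so one must carefully piece together the exponentially expanding linear-regime estimate near zero with the constant lower bound $c_1$ that holds once the orbit has escaped the linear regime, in order to control possible excursions of the orbit back toward the zero section.
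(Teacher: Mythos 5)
Your proof is correct, and its skeleton coincides with the paper's: both arguments apply the semiuniform ergodic theorem (Theorem~\ref{Semi-uniform ergodic}) to the additive cocycle $\log(F^n_{\beta,\theta,\alpha})'(0)$, using Proposition~\ref{mainlyapunovlemma} to verify the hypothesis, obtain $D=\Theta\times\kreis\times[\delta,1]$ as a set whose forward orbit stays away from the zero section, and then convexify the fibres at the end. The difference lies in how the linearised repulsion at $r=0$ is converted into an actual lower bound for orbits. The paper exploits concavity: for each fixed $n$ the uniform bound $(F^n_{\beta,\theta,\alpha})'(0)>e^{n\eps}$ extends by continuity to $r\in[0,\delta(n)]$, and the inequality $F^n_{\beta,\theta,\alpha}(\delta)\geq\delta\,(F^n_{\beta,\theta,\alpha})'(\delta)$ then shows $F_\beta^n(D)\ssq D$ for all $n\geq n_0$, so that the finite union $K=\bigcup_{m=0}^{n_0-1}F_\beta^m(D)$ is already compact and forward invariant. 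You instead use the first-order behaviour $h(x)\geq(1-\eps_1)x$ near $0$ to force escape from $[0,\delta_0]$ in bounded time, splice in the constant bound $c_1=h(\beta\delta_0)\Omega_{\min}$ at each re-entry, and take the closure of the infinite forward orbit of $D$. Your splicing step, which you rightly flag as the delicate point, does go through (one also needs the trivial uniform lower bound on $\Phi_n$ for $n<n_0$, coming from compactness and $A(\theta)\in\sltr$, to control the initial segment of each excursion), but the paper's concavity trick avoids it entirely and yields a finite-union $K$ whose compactness is immediate; your route is slightly longer but requires only monotonicity and the asymptotics of $h$ at $0$ rather than concavity of the full fibre maps.
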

\begin{proof}

  The function $\psi^-(\theta,\alpha)=0$ is invariant graph for
  $F_\beta$. Since $\beta>\beta_2$, we have that
  $\lambda_\mu(\psi^-)=\lambda_\mu(F_\beta,0)>0$ for all $\mu\in\cM
  (g)$ by Proposition \ref{mainlyapunovlemma}. Hence, as the set
  $\Theta\times\kreis\times\{0\}$ is compact and invariant under
  $F_\beta$, Theorem~\ref{Semi-uniform ergodic} applied to $T=g$,
  $\Phi_n(\theta,\alpha)=-\log\left(F^n_{\beta,\theta,\alpha}\right)'(0)$
  and $\lambda=0$ implies that for some $\eps>0$ and $n_0\in\N$ we
  have $\log\left(F^n_{\beta,\theta,\alpha}\right)'(0)>n\eps$ for all
  $\theta\in\Theta$, $\alpha\in\kreis$ and $n\geq n_0$.  Thus, the set
  $\Theta\times\kreis\times\{0\}$ is uniformly repelling for
  $F_\beta^n$ in the vertical direction.

  Let $D:=\Theta\times\kreis\times[\delta,1]$ for some $\delta>0$.  We
  claim that when $\delta$ is sufficiently small, this set is forward
  invariant under $F_\beta^n$ for large $n$. More precisely, there
  exists $n_0\in\N$ such that
\begin{equation}\label{eqn_forwardinv}
F_\beta^n(D)\subseteq D \quad \forall n\geq n_0.
\end{equation}

The uniform repulsion of $\Theta\times\kreis\times\{0\}$ implies
that for all $n\geq n_0$, there exist $\delta (n)>0$ such that
$$\log\left(
  F_{\beta,\theta,\alpha}^n\right)'(r)\ > \ \frac{\varepsilon}{2}\ > \ 0 \quad
\forall \ (\theta,\alpha,r)\in
\Theta\times\kreis\times[0,\delta(n)].$$ Now let $\delta_0=\min\{
\delta(n_0),\ldots, \delta(2n_0-1)\}$ and $\delta\in(0,\delta_0)$. We have that
$$F_{\beta,\theta,\alpha}^n(r)\ \geq F_{\beta,\theta,\alpha}^n(\delta)\ \geq \  \delta\cdot
\left(F_{\beta,\theta,\alpha}^n\right)'(\delta) \ \geq \ \delta$$ for
all $\delta\in[0,\delta_0]$, $n\in\{n_0,\ldots,2n_0-1\}$ and
$(\theta,\alpha,r)\in \Theta\times\kreis\times[\delta,1]$.
Inductively, $F_{\beta,\theta,\alpha}^n(r)\geq \delta$ for all $n\geq
n_0$ and all $(\theta,\alpha,r)\in\Theta\times\kreis\times[\delta,1]$.
This proves claim (\ref{eqn_forwardinv}).

We define the set $K:=\bigcup_{m=0}^{{n_0}-1}F_\beta^m(D)$. Then
$$F_\beta(K)=\bigcup_{m=1}^{n_0}F_\beta^m(D)=F_\beta^{n_0}(D)\cup\left( \bigcup_{m=1}^{n_0-1}F_\beta^m(D)
\right)\subseteq D\cup\left( \bigcup_{m=1}^{n_0-1}F_\beta^m(D)\right)=K.$$
Therefore $K$ is compact and $F_\beta$-forward invariant, and
clearly $\Theta\times\kreis\times[\delta,1]=D\subseteq K
\subseteq\Theta\times\kreis\times(0,1]$. If $K(\theta,\alpha)$ is not
an interval for all $(\theta,\alpha)\in \Theta\times\kreis$, then we
can replace $K$ with the set $\tilde K=\{(\theta,\alpha,r)\mid \exists
r_1,r_2\in K(\theta,\alpha): r_1\leq r\leq r_2\}$.  Due to the
monotonicity of the fibre maps $F_{\beta,\theta,\alpha}$, this set
$\tilde K$ is still $F_\beta$-forward invariant and therefore has all
the required properties. \end{proof}


\begin{proposition}\label{p.doubleskew.case3}
  The set $\hat{K}:=\bigcap_{n\in\N}F^n_\beta(K)$ is $F_\beta$-invariant and
  equals $\graph(\psi^+_\beta)$. In particular,
  $\psi^+_\beta:\Theta\times\kreis\rightarrow[0,1]$ is continuous and strictly
  positive.
\end{proposition}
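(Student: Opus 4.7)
The plan is to apply the curve criterion of Lemma~\ref{l.curve-criterium} to $\hat K$ and then identify the resulting continuous invariant curve with $\psi^+_\beta$. First I would verify the structural properties of $\hat K$. It is a nested intersection of compact $F_\beta$-forward invariant sets, hence compact, non-empty, and $F_\beta(\hat K)=\hat K$ by the standard compactness argument (continuity of $F_\beta$ together with $F_\beta(F_\beta^n(K))=F_\beta^{n+1}(K)$). Since $K$ is a compact subset of $\Theta\times\kreis\times(0,1]$, there exists $\delta>0$ with $K\subseteq\Theta\times\kreis\times[\delta,1]$, so $\hat K$ is uniformly bounded away from the zero section. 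Each fibre $K(\theta,\alpha)$ is an interval by Lemma~\ref{l.doubleskew}; the monotonicity of $F_{\beta,\theta',\alpha'}$ propagates this to every iterate $F_\beta^n(K)$, and a decreasing intersection of compact intervals is a compact interval, so every fibre $\hat K(\theta,\alpha)$ is a closed interval in $[\delta,1]$.

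Next I would apply Lemma~\ref{l.curve-criterium} to $\hat K$ viewed as an invariant subset of the $g$-forced $\mathcal{C}^2$-interval map $F_\beta$ over the compact metric base $(\Theta\times\kreis,g)$. Let $\mu\in\mathcal{M}(g)$ be ergodic and let $\psi$ be any $(F_\beta,\mu)$-invariant graph with $\graph(\psi)\subseteq\hat K$. Then $\psi\geq\delta>0=\psi^-$ almost surely, so $\psi$ and $\psi^-$ are two distinct $(F_\beta,\mu)$-invariant graphs. Strict concavity of $h$ implies that each fibre map $r\mapsto h(\beta r)\Omega(\theta,\alpha)$ is strictly concave, and $\log F'_{\beta,\theta,\alpha}(r)=\log\beta+\log h'(\beta r)+\log\Omega(\theta,\alpha)$ is bounded below by a continuous (hence integrable) function on the compact set $\Theta\times\kreis\times[0,1]$, so Theorem~\ref{t.convexity} applies. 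Combined with $\lambda_\mu(\psi^-)=\lambda_\mu(F_\beta,0)>0$ from Proposition~\ref{mainlyapunovlemma} (valid since $\beta>\beta_2$), Theorem~\ref{t.convexity} forces $\lambda_\mu(\psi)<0$. Lemma~\ref{l.curve-criterium} then yields a continuous function $\varphi:\Theta\times\kreis\to[\delta,1]$ with $\hat K=\graph(\varphi)$.

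To identify $\varphi=\psi^+_\beta$, note that the forward invariance established in Lemma~\ref{l.doubleskew} gives $F^n_{\beta,g^{-n}(\theta,\alpha)}(1)\in[\delta,1]$ for all sufficiently large $n$, whence $\psi^+_\beta\geq\delta$ by \eqref{e.upperboundinggraph}. Consequently $\graph(\psi^+_\beta)\subseteq K$, and because $g$ is a homeomorphism of $\Theta\times\kreis$ the graph is $F_\beta$-invariant in the strong sense $F_\beta(\graph(\psi^+_\beta))=\graph(\psi^+_\beta)$, so $\graph(\psi^+_\beta)\subseteq F_\beta^n(K)$ for all $n$ and hence $\graph(\psi^+_\beta)\subseteq\hat K=\graph(\varphi)$. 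Since both sets are graphs of functions over $\Theta\times\kreis$, we conclude $\psi^+_\beta=\varphi$, which is continuous and bounded below by $\delta>0$. The main obstacle is checking that the hypotheses of Theorem~\ref{t.convexity} and Lemma~\ref{l.curve-criterium} genuinely apply in the present two-factor setting, but these reduce to continuity and boundedness of $\beta h'(\beta r)\Omega(\theta,\alpha)$ on the compact domain together with the strict concavity of $h$; this is where the polar-coordinate reduction of Section~\ref{top.doubleskew} pays off, since it casts the system as a one-dimensional monotone concave fibre dynamics.
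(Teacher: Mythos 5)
Your proposal is correct and follows essentially the same route as the paper: establish that $\hat K$ is a compact invariant set with interval fibres, use Proposition~\ref{mainlyapunovlemma} together with Theorem~\ref{t.convexity} to force $\lambda_\mu(\psi)<0$ for every invariant graph in $\hat K$, and then invoke Lemma~\ref{l.curve-criterium}. The only difference is that you spell out the identification $\hat K=\graph(\psi^+_\beta)$ (via $\psi^+_\beta\geq\delta$ and invariance of its graph) more explicitly than the paper does, which is a welcome addition rather than a deviation.
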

\proof As $K$ is compact and $F_\beta$-forward invariant, $\hat K$
is compact and $F_\beta$-invariant.  By Theorem \ref{t.measures},
all $F_\beta$-invariant measures are of the form $\nu=\mu_{\psi}$
for some $\mu\in\cM(g)$ and some $(F_\beta,\mu)$-invariant graph
$\psi$. The graph $\psi^-(\theta,\alpha)=0$ is always invariant and
$\lambda_\mu(\psi^-)=\lambda_\mu(F_\beta,0)>0$ for all
$\mu\in\cM(g)$. Therefore, Theorem \ref{t.convexity} yields that for
all $\mu\in\cM(g)$ the only other possible $(F_\beta,\mu)$-invariant
graph is $\psi^+_\beta$, which is $\mu$-a.s. strictly positive.


  Hence $\psi=\psi^+_\beta$, and again by Theorem~\ref{t.convexity} we have
  $\lambda_\mu(\psi^+_\beta)<0$. Thus, Lemma \ref{l.curve-criterium} yields
  that $\hat{K}$ is a continuous $F_\beta$-invariant curve. Consequently,
  $\psi^+_\beta$ has to be the unique continuous function
  $\Theta\times\kreis\rightarrow\R^+$ such that
  $\hat{K}=\graph(\psi^+_\beta)$.
\qed\medskip

\begin{cor}
  The upper bounding graph $\psi^+_\beta$ is attracting, in the sense that
$$\lim_{n\rightarrow\infty}(F^n_{\beta,\theta,\alpha}(r)-\psi^+_\beta(g^n(\theta,\alpha)))=0
\qquad \forall \
(\theta,\alpha,r)\in\Theta\times\kreis\times(0,\infty).$$
\end{cor}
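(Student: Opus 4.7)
I would prove this in two steps: (i) every orbit starting from $(\theta,\alpha,r)$ with $r>0$ eventually lies in a compact forward-invariant set $K$ of the type constructed in Lemma~\ref{l.doubleskew}; (ii) the iterates $F^n_\beta(K)$ shrink in the Hausdorff sense to $\graph(\psi^+_\beta)$. Putting these together yields the desired uniform convergence.

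For step (i), note that the rescaling condition~\eqref{e.scaling} ensures $F_{\beta,\theta,\alpha}(r)\in[0,1]$ for every $r\geq 0$, so after one preliminary iterate we may assume $r\in(0,1]$. Strict positivity persists because $h(\beta r)>0$ for $r>0$ and $\Omega(\theta,\alpha)=\|A(\theta)(\cos\pi\alpha,\sin\pi\alpha)\|>0$ on all of $\Theta\times\kreis$ (as $A(\theta)\in\sltr$ is invertible). Lemma~\ref{l.doubleskew} then produces $\delta_0>0$ such that for \emph{every} $\delta\in(0,\delta_0)$ an associated compact forward-invariant set $K=K_\delta$ exists with $\Theta\times\kreis\times[\delta,1]\subseteq K_\delta$. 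For the given starting point, I choose $\delta<\min\{r,\delta_0\}$, and then the (possibly once-iterated) orbit already lies in $K_\delta$ without any further argument.

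For step (ii), the inclusion $F_\beta(K)\subseteq K$ means the compacts $F^n_\beta(K)$ form a decreasing sequence, and Proposition~\ref{p.doubleskew.case3} identifies their intersection with $\graph(\psi^+_\beta)$. Given $\eps>0$, the continuity of $\psi^+_\beta$ makes
\[
U_\eps=\left\{(\theta,\alpha,r)\in\Theta\times\kreis\times[0,\infty)\ \left|\ |r-\psi^+_\beta(\theta,\alpha)|<\eps\right.\right\}
\]
an open neighbourhood of $\graph(\psi^+_\beta)$. The sets $F^n_\beta(K)\smin U_\eps$ are a nested family of compacts with empty intersection, so one of them must already be empty by the finite intersection property. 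Hence $F^n_\beta(K)\subseteq U_\eps$ for all $n$ sufficiently large, and combining with step (i) yields $|F^n_{\beta,\theta,\alpha}(r)-\psi^+_\beta(g^n(\theta,\alpha))|<\eps$ from some index onwards.

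The main thing to watch is step (i): one might be tempted to run an expansion argument to push orbits away from $\Theta\times\kreis\times\{0\}$, but the proof becomes much cleaner once one notices that $\delta$ in Lemma~\ref{l.doubleskew} is at our disposal and can be chosen smaller than the given initial radius, so the orbit sits inside $K_\delta$ for free after the single normalising iterate. Everything else reduces to the standard Hausdorff convergence of a nested sequence of compacts to their intersection, together with the openness of $U_\eps$ secured by the continuity of $\psi^+_\beta$ from Proposition~\ref{p.doubleskew.case3}.
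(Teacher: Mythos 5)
Your argument is correct, but the key step is carried out differently from the paper. Both proofs start from Lemma~\ref{l.doubleskew} to trap the (once-normalised) orbit in a compact forward-invariant set $K=K_\delta$ bounded away from the zero section, and your observation that $\delta$ may simply be chosen smaller than the initial radius is exactly the right way to handle step (i). Where you diverge is in showing that $F^n_\beta(K)$ collapses onto $\graph(\psi^+_\beta)$: the paper argues quantitatively, bounding $F^n_{\beta,\theta,\alpha}(1)-F^n_{\beta,\theta,\alpha}(\delta)\leq(1-\delta)\sup_{(\theta,\alpha,r)\in K}(F^n_{\beta,\theta,\alpha})'(r)$ by the Mean Value Theorem and then invoking the semiuniform ergodic theorem (Theorem~\ref{Semi-uniform ergodic}) for the additive sequence $\log(F^n_{\beta,\theta,\alpha})'(r)$, whose averages are negative for every invariant measure on $K$ by Theorems~\ref{t.measures} and~\ref{t.convexity}; this yields uniform \emph{exponential} contraction. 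You instead use the soft topological fact that the nested compacts $F^n_\beta(K)$ must eventually enter any open neighbourhood of their intersection, which Proposition~\ref{p.doubleskew.case3} identifies as $\graph(\psi^+_\beta)$, the openness of the $\eps$-tube $U_\eps$ being guaranteed by the continuity of $\psi^+_\beta$ established there. Your route avoids a second pass through the ergodic-theoretic machinery (it is all front-loaded into Proposition~\ref{p.doubleskew.case3}) and even gives convergence uniform over $K_\delta$, but it produces no rate; the paper's route costs one more application of Theorem~\ref{Semi-uniform ergodic} and in exchange delivers exponential attraction. Both are valid proofs of the stated corollary.
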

\begin{proof}
  Due to the definition of $\psi^+_\beta$ and the monotonicity of the fibre
  maps, it is enough to show that for all $\delta \in (0,\delta_0)$,
  with $\delta_0$ from Lemma~\ref{l.doubleskew}, we have
  $\lim_{n\rightarrow\infty}(F_{\beta,\theta,\alpha}^n(1)-F_{\beta,\theta,\alpha}^n(\delta))=0$
  for all $(\theta,\alpha)\in\Theta\times\kreis$. Fixing
  $\delta<\delta_0$ and choosing $K$ as in Lemma~\ref{l.doubleskew},
  we have that
\begin{equation}\label{cor.doubleskew_eq} F_{\beta,\theta,\alpha}^n(1)-F_{\beta,\theta,\alpha}^n(\delta)
  \leq (1-\delta)\cdot\sup_{(\theta,\alpha,r)\in
    K}(F_{\beta,\theta,\alpha}^n)'(r)\end{equation} Consider
$\Phi_n(\theta,\alpha,r)=\log(F_{\beta,\theta,\alpha}^n)'(r)$.
$\Phi_n$ is an additive sequence and by invoking
Theorem~\ref{t.measures} and \ref{t.convexity} as in the preceding
proof, we obtain that $\overline{\Phi}_{\tilde{\nu}}<0$ for all
measures $\tilde{\nu}\in\cM(F_\beta)$ supported on $K$. Thus, by
Theorem \ref{Semi-uniform ergodic} there exists $\varepsilon>0$ and
$n_0\in\N$ such that for all $n\geq n_0$ we have
$\frac{1}{n}\log(F_{\beta,\theta,\alpha}^n)'(r) \leq -\varepsilon$,
which means that $(F_{\beta,\theta,\alpha}^n)'(r)\leq
e^{-\varepsilon\cdot
  n}$. Consequently, $\sup_{(\theta,\alpha,r)\in
  K}(F_{\beta,\theta,\alpha}^n)'(r)\leq e^{-\varepsilon \cdot
  n}\nKonv 0$, which completes the proof.
\end{proof}

\proof[\bf Proof of Theorem~\ref{theorem_topological}.] Since
\begin{equation} \label{e.attractor-relation}
\mathcal{A}_\beta\ = \ \hat P^{-1}\left(\tilde{\mathcal{A}}_\beta\smin
  \Theta\times\kreis\times\{0\}\right)
\end{equation}
and $\|f^n_{\beta,\theta}(v)\| = F^n_{\beta,\theta,p(v)}(r)$, statement (a) of
the theorem follows immediately from
Proposition~\ref{p.doubleskew.case1}. Further, as mentioned in
Remark~\ref{r.random-hopf}(d), statement (b) is a direct consequence of
Theorem~\ref{t.hopf-random}, whose proof is independent of
Theorem~\ref{theorem_topological}.

It remains to prove statement (c) on the parameters $\beta>\beta_2$.
However, due to (\ref{e.attractor-relation}) this follows directly
from Proposition~\ref{p.doubleskew.case3} and
Corollary~\ref{cor.doubleskew_eq}. Note that, thus,
$\mathcal{T}_\beta=\hat
P^{-1}\left(\graph\left(\psi^+_\beta\right)\right)$ and
$r_\beta(\theta,\alpha)=\psi^+_\beta(\theta,2\alpha\bmod 1)$.
\qed\medskip

\section{Random forcing}\label{random.setting}

Throughout this section, we assume that $(f_\beta)_{\beta\in\R^+}$
satisfies (R1)--(R3). In particular $(\Theta,\mathcal{B},m)$ is a
probability space and $\gamma:\Theta\to\Theta$ is a measure-preserving
bijection. As before, the global attractor is given by
(\ref{e.doubleskewattractor}) and we have
\begin{equation}
  \label{e.boundingequation}
  \tilde{\mathcal{A}}_\beta \ = \ \left\{(\theta,\alpha,r)\in
   \Theta\times\kreis\times\R^+ \mid 0\leq r \leq \psi^+_\beta(\theta,\alpha)\right\} \ ,
\end{equation}
where the upper bounding graph $\psi^+_\beta$ is given by
(\ref{e.upperboundinggraph}) as before.  We start again by analysing
the double skew product $F_\beta$ in the different parameter regimes
$\beta<\beta_2$, $\beta_1<\beta<\beta_2$ and $\beta>\beta_2$, and
then apply the results to the original system $f_\beta$. In each
case, we have to take particular care to ensure that the exceptional
set of measure zero in the statements can be chosen independent of
the parameter $\beta$.

\subsection{The non-critical parameter regions: Proof of Theorem~\ref{t.hopf-random}}

{\bf The case $\beta<\beta_1^m$.} Again, this is the simplest case,
where the global attractor equals $\Theta\times\kreis\times\{0\}$.
\begin{lemma} \label{l.random-attraction}
There exists a set
  $\Theta_1\ssq\Theta$ of full measure, such that for all $\beta <
  \beta_1^m$ and all $\theta\in\Theta_1$ we have
  $\tilde{\mathcal{A}}_\beta(\theta) = \kreis\times\{0\}$ and
  \begin{equation} \label{e.random_attraction}
    \lim_{n\rightarrow\infty}F^n_{\beta,\theta,\alpha}(r)\ =\ 0\quad
    \textrm{for all } (\alpha,r)\in\kreis\times[0,\infty) \ .
  \end{equation}
\end{lemma}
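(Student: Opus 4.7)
My plan is to bound the nonlinear fibre map $F^n_{\beta,\theta,\alpha}$ by its derivative at $r=0$ using concavity, and then let the matrix cocycle $A_n$ do all the work via Kingman's subadditive ergodic theorem. The key point is that the single full-measure set $\Theta_1$ can be defined purely in terms of the cocycle $A$ and the measure $m$, and hence is independent of $\beta$.

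First, strict concavity of $h$ together with $h(0)=0$ and $h'(0)=1$ gives $h(x)\le x$ for $x\ge 0$, so by the chain rule and (\ref{eq:Omega_n}),
\[
F^n_{\beta,\theta,\alpha}(r) \ \le \ r\cdot (F^n_{\beta,\theta,\alpha})'(0) \ = \ r\cdot \beta^n \prod_{k=0}^{n-1}\Omega(g^k(\theta,\alpha)) \ = \ r\cdot\beta^n\bigl\|A_n(\theta)(\cos\pi\alpha,\sin\pi\alpha)\bigr\| \ \le \ r\cdot\beta^n\|A_n(\theta)\|.
\]
For the pullback version, the same estimate applied at $\gamma^{-n}\theta$ gives $F^n_{\beta,\gamma^{-n}\theta,\alpha'}(r')\le r'\cdot\beta^n\|A_n(\gamma^{-n}\theta)\|$ for every $\alpha'\in\kreis$ and $r'\in[0,1]$.

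Next, I would invoke Kingman's theorem twice to obtain a $\gamma$-invariant full-measure set $\Theta_1\ssq\Theta$, depending only on $A$ and $m$, on which
\[
\lim_{n\to\infty}\frac{1}{n}\log\|A_n(\theta)\| \ = \ \lim_{n\to\infty}\frac{1}{n}\log\|A_n(\gamma^{-n}\theta)\| \ = \ \lambda_m(A).
\]
The first limit is Kingman applied to the subadditive sequence $\Phi_n=\log\|A_n\|$ over $(\Theta,m,\gamma)$; integrability follows from boundedness of $A$. For the second, set $\tilde\Phi_n(\theta):=\log\|A_n(\gamma^{-n}\theta)\|$ and use the factorisation $A_{n+k}(\gamma^{-(n+k)}\theta)=A_n(\gamma^{-n}\theta)\cdot A_k(\gamma^{-(n+k)}\theta)$ to verify $\tilde\Phi_{n+k}(\theta)\le \tilde\Phi_n(\theta)+\tilde\Phi_k(\gamma^{-n}\theta)$, so that $\tilde\Phi_n$ is subadditive for the measure-preserving system $(\Theta,m,\gamma^{-1})$; the Kingman limit equals $\lim_n\frac{1}{n}\int\tilde\Phi_n\,dm=\lim_n\frac{1}{n}\int\log\|A_n\|\,dm=\lambda_m(A)$ by $\gamma$-invariance of $m$.

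On $\Theta_1$, for any $\beta<\beta_1^m=e^{-\lambda_m(A)}$ one has $\log\beta+\lambda_m(A)<0$, so both $\beta^n\|A_n(\theta)\|$ and $\beta^n\|A_n(\gamma^{-n}\theta)\|$ decay exponentially. Combined with the concavity bound, the first gives $F^n_{\beta,\theta,\alpha}(r)\to 0$ for every $(\alpha,r)\in\kreis\times[0,\infty)$, proving (\ref{e.random_attraction}); the second bounds every preimage $F^n_{\beta,\gamma^{-n}\theta}(\kreis\times[0,1])$ inside $\kreis\times[0,\beta^n\|A_n(\gamma^{-n}\theta)\|]$, so intersecting over $n$ gives $\tilde{\mathcal{A}}_\beta(\theta)\ssq\kreis\times\{0\}$, while the reverse inclusion is immediate from $F_\beta$-invariance of $\Theta\times\kreis\times\{0\}$. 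The main obstacle is precisely the $\beta$-independence of the exceptional set: invoking Theorem~\ref{t.random_semiuniform} directly for $F_\beta$ produces an exceptional set that a priori depends on $\beta$ (through the tempered random variable $C$), and because the parameter range is uncountable a simple countable-intersection argument is unavailable. The route through Kingman applied to the linear cocycle $A_n$ bypasses this difficulty, since $\Theta_1$ is defined purely in terms of $A$ and $m$.
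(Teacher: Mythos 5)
Your argument is correct and follows essentially the same route as the paper: bound $F^n_{\beta,\theta,\alpha}(r)$ by $r\,(F^n_{\beta,\theta,\alpha})'(0)=r\,\beta^n\|A_n(\theta)(\cos\pi\alpha,\sin\pi\alpha)\|$ and apply Kingman's subadditive ergodic theorem to the forward and pullback matrix cocycles. The only (harmless) difference is in the $\beta$-independence of the exceptional set: you obtain it directly because your full-measure set is defined purely from the cocycle $A$, whereas the paper reaches the same conclusion by noting that $\psi^+_\beta$ and the fibre maps are monotone in $\beta$ and intersecting the sets $\Theta((1-1/n)\beta_1^m)$ over $n\in\N$.
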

\proof Since
$\tilde{\mathcal{A}}_\beta(\theta)
  = \kreis\times\{0\}$ is equivalent to
$\psi^+_\beta(\theta,\alpha)=0$ for all $\alpha\in\kreis$, we first
want to show that
\begin{equation} \label{e.random_attraction2}
  \psi^+_\beta(\theta,\alpha)\ = \ 0 \quad \textrm{for } m\textrm{-a.e.}\
  \theta\in\Theta \textrm{ and all } \alpha\in\kreis . \
\end{equation}
However, we have
\begin{eqnarray*}
  \psi^+_\beta(\theta,\alpha) & = & \nLim F^n_{\beta,g^{-n}(\theta,\alpha)}(1) \\
  & \leq & \limsup_{n\to\infty} \left( F^n_{\beta,g^{-n}(\theta,\alpha)}\right)'(0) \\
  & \leq & \limsup_{n\to\infty} \beta^n\cdot \|A_n(\base^{-n}\theta)\| \ = \ 0 \quad m\textrm{-a.s.}
\end{eqnarray*}
since $\nLim \ntel \log \|A_n(\base^{-n}\theta)\| = \nLim \log
\|A_{-n}(\theta)\| = \lmax(A)$ $m$-a.s. by Kingmans Subadditive
Ergodic Theorem and $\beta< \beta_1^m=e^{-\lmax(A)}$.

The fact that \eqref{e.random_attraction} also holds $m$-a.s.\ is
proved exactly in the same way, replacing the pullback iteration by
forward iteration. Hence, for every fixed $\beta<\beta_1^m$, the set
\[
\Theta(\beta) \ = \ \left\{\theta\in\Theta\mid
\psi_\beta^+(\theta,\alpha)=0\ \textrm{ and } \nLim
  F^n_{\beta,\theta,\alpha}(r)=0 \ \forall \alpha,r\right\}
\]
has full measure. However, since the fibre maps
$F_{\beta,\theta,\alpha}$ and the upper bounding graph $\psi^+_\beta$
are increasing in $\beta$, the set $\Theta(\beta)$ is decreasing in
$\beta$. Therefore
\[
\Theta_1 \ := \ \bigcap_{\beta<\beta_1}\Theta(\beta) \ = \
\bigcap_{n\in\N} \Theta((1-1/n)\beta_1^m)
\]
has full measure and satisfies the assertions of the lemma.  \qed
\bigskip

\noindent {\bf The case $\beta_1^m<\beta<\beta_2^m$.} In this case, we
split the proof into several lemmas.
\begin{lemma} \label{l.intermediate_upperbounding}
  There exists a set $\Theta_2\ssq\Theta$ of full measure such that
  for all $\beta\in(\beta_1^m,\beta_2^m)$ and $\theta\in\Theta_2$ we have
  \[
  \psi^+_\beta(\theta,\phi_u(\theta))\ > \ 0 \quad \textrm{and} \quad
  \psi^+_\beta(\theta,\alpha)\ = \ 0 \quad \textrm{for all }
  \alpha\in\kreis\smin\{\phi_u(\theta)\} \ .
  \]
\end{lemma}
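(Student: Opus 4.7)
My plan is to reduce both assertions to an analysis of the derivative at the origin of iterated fibre maps. Combining Lemma~\ref{l.conjugacy}(iii), equation~(\ref{eq:Omega_n}), and $h'(0)=1$, one computes
\[
\bigl(F^n_{\beta,\theta,\alpha}\bigr)'(0) \ = \ \beta^n\bigl\|A_n(\theta)v_\alpha\bigr\|, \qquad v_\alpha:=(\cos\pi\alpha,\sin\pi\alpha).
\]
Concavity of $h$ (and hence of every iterate $F^n_{\beta,\theta,\alpha}$) gives the a priori bound $F^n_{\beta,\theta,\alpha}(r)\leq\bigl(F^n_{\beta,\theta,\alpha}\bigr)'(0)\cdot r$, so the pullback $\psi^+_\beta(\theta,\alpha)=\lim_n F^n_{\beta,g^{-n}(\theta,\alpha)}(1)$ is controlled by the derivative evaluated along backward base orbits.

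\textbf{Vanishing for $\alpha\neq\phi_u(\theta)$.} If $w_n$ denotes the unit vector associated with $g^{-n}_\theta(\alpha)$, then by construction $A_n(\gamma^{-n}\theta)w_n$ is a scalar multiple of $v_\alpha$, and $\det A_n=1$ gives
\[
\bigl(F^n_{\beta,g^{-n}(\theta,\alpha)}\bigr)'(0) \ = \ \beta^n\bigl\|A_n(\gamma^{-n}\theta)w_n\bigr\| \ = \ \frac{\beta^n}{\|B_n(\theta)v_\alpha\|},
\]
where $B_n(\theta):=A_n(\gamma^{-n}\theta)^{-1}$ is the inverse cocycle over $(\Theta,m,\gamma^{-1})$. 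A direct check shows $B_n$ has the same top exponent $\lambda_m(A)$ as $A_n$, and its Oseledets splitting reverses the roles of $E^u$ and $E^s$: the stable subspace of $B_n$ at $\theta$ is $\R v^u(\theta)$. Since $\alpha\neq\phi_u(\theta)$ translates precisely to $v_\alpha\notin\R v^u(\theta)$, Theorem~\ref{oseledets} applied to $B_n$ yields $n^{-1}\log\|B_n(\theta)v_\alpha\|\to\lambda_m(A)$ on a full measure set of $\theta$, so that
\[
\tfrac{1}{n}\log\bigl(F^n_{\beta,g^{-n}(\theta,\alpha)}\bigr)'(0) \ \longrightarrow \ \log\beta-\lambda_m(A) \ < \ 0
\]
as $\beta<\beta_2^m=e^{\lambda_m(A)}$. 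The concavity bound then forces $\psi^+_\beta(\theta,\alpha)=0$.

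\textbf{Positivity on $\phi_u$ and $\beta$-independence.} The graph of $\phi_u$ is $g$-invariant, so restricting $F_\beta$ there yields a $\gamma$-forced monotone strictly concave $\mathcal{C}^2$-interval map on $[0,1]$. The trivial graph $\psi^-\equiv 0$ is invariant with Lyapunov exponent $\log\beta+\lambda_m(A)>0$ by Lemma~\ref{lemmaA}, while Lemma~\ref{l.upperboundinglyap} provides an invariant graph $\theta\mapsto\psi^+_\beta(\theta,\phi_u(\theta))$ with non-positive exponent. Since two invariant graphs with distinct exponents cannot coincide $m$-a.s., this forces $\psi^+_\beta(\theta,\phi_u(\theta))>0$ $m$-a.s. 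To produce a single $\Theta_2$ independent of $\beta$, I would exploit monotonicity of $\beta\mapsto\psi^+_\beta(\theta,\alpha)$ (inherited from $\beta\mapsto F_{\beta,\theta,\alpha}$): choose countable sequences $\beta^\downarrow_k\downarrow\beta_1^m$ and $\beta^\uparrow_k\uparrow\beta_2^m$, obtain the full measure sets associated to each $\beta^\downarrow_k$ (for positivity) and each $\beta^\uparrow_k$ (for vanishing), intersect these with the full measure Oseledets set to form $\Theta_2$, and for arbitrary $\beta\in(\beta_1^m,\beta_2^m)$ sandwich $\beta^\downarrow_k<\beta<\beta^\uparrow_k$.

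\textbf{Expected obstacle.} The delicate feature is that the vanishing statement must hold for \emph{every} $\alpha\neq\phi_u(\theta)$ on a single full measure set of $\theta$, not merely for $m$-a.e.\ pair $(\theta,\alpha)$. This requires the form of Theorem~\ref{oseledets} that gives the exponent $\lambda_m(A)$ for every $v$ off the one-dimensional stable subspace $\R v^u(\theta)$: one decomposes $v=v^+\oplus v^-$ along the Oseledets splitting of $B_n$ and checks that whenever $v^+\neq 0$ the expanding component eventually dominates the contracting one. The apparently critical direction $\alpha=\phi_s(\theta)$ is then automatic, as $v^s(\theta)\notin\R v^u(\theta)$.
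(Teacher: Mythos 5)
Your argument is correct and its structure matches the paper's, with one local difference in how the vanishing of $\psi^+_\beta$ off $\phi_u$ is derived. You compute $\bigl(F^n_{\beta,g^{-n}(\theta,\alpha)}\bigr)'(0)=\beta^n/\|B_n(\theta)v_\alpha\|$ in terms of the inverse cocycle $B_n(\theta)=A_n(\gamma^{-n}\theta)^{-1}$ and then apply Theorem~\ref{oseledets} directly to $B_n$, noting that its Oseledets splitting at $\theta$ has stable direction $\R v^u(\theta)$, so that $\tfrac1n\log\|B_n(\theta)v_\alpha\|\to\lambda_m(A)$ for every $\alpha\neq\phi_u(\theta)$. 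The paper instead invokes Lemma~\ref{phi_invgraphs} for the inverse action to obtain the projective convergence $g^{-n}_\theta(\alpha)\to\phi_s(\gamma^{-n}\theta)$ (exponentially fast), and then identifies the resulting Birkhoff limit of $\tfrac1n\log\bigl(F^n_{\beta,g^{-n}(\theta,\alpha)}\bigr)'(0)$ with $\lambda_{m_{\phi_s}}(F_\beta,0)=\log\beta-\lambda_m(A)$ via the calculus already set up in Lemma~\ref{lemmaA}. Both paths funnel through the same Oseledets structure; yours is the more explicitly linear-algebraic version and avoids passing through the intermediate projective-convergence lemma. For the positivity $\psi^+_\beta(\theta,\phi_u(\theta))>0$ and the $\beta$-independence of $\Theta_2$, your reasoning (Lemma~\ref{l.upperboundinglyap} to bound $\lambda_m(\psi^+_\beta)\le0$ on $\phi_u$ against $\lambda_m(\psi^-)=\log\beta+\lambda_m(A)>0$, then ergodicity, then monotone exhaustion in $\beta$) coincides with the paper's. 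Two small remarks: the identity $\bigl(F^n_{\beta,g^{-n}(\theta,\alpha)}\bigr)'(0)=\beta^n/\|B_n(\theta)v_\alpha\|$ uses only invertibility of $A_n$, not $\det A_n=1$; and the ``expected obstacle'' you flag (a single full-measure $\theta$-set working for \emph{every} $\alpha\neq\phi_u(\theta)$) is indeed the right thing to worry about, and your resolution via the decomposition along the $B_n$-Oseledets splitting is exactly what makes the argument go through.
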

\proof For the inverse action $g^{-1}$, which is the projective
action of the inverse cocycle $(\base^{-1},A^{-1})$, the roles of
$\phi_u$ and $\phi_s$ exchange, and $\phi_s$ becomes the attractor.
Hence, due to Lemma~\ref{phi_invgraphs} we have that for $m$-a.e.\
$\theta$ and all $\alpha\neq \phi_u(\theta)$
\[
\nLim d(g^{-n}_\theta(\alpha),\phi_s(\base^{-n}\theta)) \ = \ 0 \ .
\]
As a consequence, we obtain that
\begin{eqnarray*}
  \lefteqn{  \nLim \ntel\log\left(F^n_{\beta,g^{-n}(\theta,\alpha)}\right)'(0)
    \ = \ \nLim \ntel \log \left(F^n_{\beta,\base^{-n}\theta,\phi_s(\base^{-n}\theta)}\right)'(0) }\\
  & = & \nLim-\ntel\log\left(F^{-n}_{\beta,\theta,\phi_s(\theta)}\right)'(0)\ = \
   \lambda_{m_{\phi_s}}(F_\beta,0) \
  = \ - \lambda_m(A)+\log\beta \ < \ 0 \ .
\end{eqnarray*}
Therefore
\begin{eqnarray*}
  \psi^+_\beta(\theta,\alpha) & = & \nLim F^n_{\beta,g^{-n}(\theta,\alpha)}(1) \\
  &\leq & \nLim\left(F^n_{\beta,g^{-n}(\theta,\alpha)}\right)'(0) \ =
  \ \nLim\left(\beta e^{-\lambda_m(A)}\right)^n \ = \ 0 \ .
\end{eqnarray*}
On the other hand, we have
$\lambda_{m_{\phi_u}}(\psi^-)=\lambda_m(A)+\log\beta >0$. By
Lemma~\ref{l.upperboundinglyap}, we therefore have
$\psi^+_\beta(\theta,\alpha)\neq\psi^-(\theta,\alpha)$
$m_{\phi_u}$-a.s., which means that
$\psi^+_\beta(\theta,\phi_u(\theta))>0$
$m$-a.s.~. Finally, using the monotonicity of $\psi_\beta^+$ in
$\beta$ as in the proof of Lemma~\ref{l.random-attraction}, it is easy
to check that the exceptional set of measure zero in all the
statements can be chosen independent of $\beta$. For this, we have to
use that the set of $\theta$ where $\psi^+_\beta(\theta,\alpha)=0$ for
all $\alpha\neq\phi_u(\theta)$ is decreasing in $\beta$, whereas the
set of $\theta$ with $\psi^+_\beta(\theta,\phi_u(\theta))>0$ is
increasing with $\beta$. \qed\medskip

Now, let
\begin{equation}
  \psi^+_{\beta,u}(\theta) \ = \
\left(\phi_u(\theta),\psi^+_\beta(\theta,\phi_u(\theta))\right) \ .
\end{equation}
We have
$F_\beta(\theta,\psi^+_{\beta,u}(\theta))=(\gamma\theta,\psi^+_{\beta,u}(\theta))$
$m$-a.s., such that $\psi^+_{\beta,u}$ is an $(F_\beta,m)$-invariant
graph when $F_\beta$ is viewed as a skew product with base $\gamma$
and two-dimensional fibres $\kreis\times\R^+$. In order to show that
$\psi^+_{\beta,u}$ is a random attractor with domain of attraction
$\tilde{\mathcal{D}}=\{(\theta,\alpha,r)\mid \alpha\neq
\phi_s(\theta),\ r\neq 0\}$, which is the equivalent to
(\ref{e.random_attractor}), we first need some preliminary statements.
We start by fixing some more notation.

Given $\theta,\alpha$ and $r$, we let
$(\theta_n,\alpha_n,r_n)=F^n_\beta(\theta,\alpha,r)$ and
\[
\Omega_n(\theta,\alpha) \ = \ \prod_{i=1}^{n-1} \Omega\circ g^i(\theta,\alpha)
\ = \
\|A_n(\theta)(\cos\pi\alpha,\sin\pi\alpha)\|
\ ,
\]
see equation (\ref{eq:Omega_n}).
Further, given $a,b\in(0,1]$, we let
\[
\Gamma(a,b) \ = \ \inf_{a\leqslant x\leqslant 1} \inf_{0< q
  \leqslant b} \frac{h(qx)}{qh(x)} \ .
\]
\begin{lemma} \label{l.gammaab}
  $\Gamma(a,b)\geq 1$ and $\Gamma(a,b)>1$ if $b<1$.
\end{lemma}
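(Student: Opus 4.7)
The plan is to exploit two well-known consequences of strict concavity of $h$ combined with $h(0)=0$ and $h'(0)=1$, namely that the function $y \mapsto h(y)/y$ is strictly decreasing on $(0,\infty)$ and that $h(x) < x$ for every $x > 0$.

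\medskip

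First I would establish the monotonicity: for $0 < y < x$, strict concavity and $h(0)=0$ give
\[
h(y) \ = \ h\!\left(\tfrac{y}{x}\cdot x + \bigl(1-\tfrac{y}{x}\bigr)\cdot 0\right) \ > \ \tfrac{y}{x}\,h(x),
\]
so $h(y)/y > h(x)/x$. Rewriting
\[
\frac{h(qx)}{q\,h(x)} \ = \ \frac{h(qx)/(qx)}{h(x)/x}
\]
and noting that for $0 < q \leq 1$ one has $qx \leq x$, the monotonicity immediately yields $\frac{h(qx)}{qh(x)} \geq 1$ for all $x \in [a,1]$ and all $q \in (0,1]$, with strict inequality whenever $q < 1$. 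This gives the first claim $\Gamma(a,b) \geq 1$.

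\medskip

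For the strict inequality when $b < 1$, I would pass to a compactness argument. The function $(x,q) \mapsto h(qx)/(q\,h(x))$ is continuous on $[a,1] \times (0,b]$ but the domain is not compact. However, using $h'(0) = 1$ we have
\[
\lim_{q\to 0^+} \frac{h(qx)}{q\,h(x)} \ = \ \frac{1}{h(x)}\lim_{u\to 0^+}\frac{h(u)\cdot x}{u} \ = \ \frac{x}{h(x)},
\]
which extends the function continuously to the compact box $[a,1]\times[0,b]$. The extended value $x/h(x)$ is strictly greater than $1$ on $[a,1]$, since strict concavity of $h$ together with $h(0)=0$ and $h'(0)=1$ forces $h(x) < x$ for every $x > 0$. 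On the interior $q \in (0,b]$ with $b < 1$ we already saw that the function is strictly greater than $1$. So the (extended) continuous function is everywhere $>1$ on the compact set, hence attains its infimum, which must also be $>1$. This is exactly $\Gamma(a,b) > 1$.

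\medskip

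The main obstacle here is the apparent $0/0$ singularity at $q=0$, which prevents a direct compactness argument on the originally stated domain; the fix is the elementary limit computation using $h'(0)=1$ that allows a continuous extension. Once this is in place, the rest is a straightforward application of the two consequences of strict concavity.
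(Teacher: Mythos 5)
Your proof is correct and follows essentially the same route as the paper's: strict concavity gives $\frac{h(qx)}{qh(x)}>1$ for $q\in(0,1)$, the limit as $q\to0$ is handled via $h'(0)=1$ (the paper computes $\liminf_{n}\frac{h(q_nx_n)}{q_nh(x_n)}\geq \frac{h'(0)}{h(a)/a}>1$), and the conclusion comes from continuity and compactness of $[a,1]\times[0,b]$. You merely make explicit the monotonicity of $y\mapsto h(y)/y$ and the continuous extension, which is a slightly more detailed write-up of the identical argument.
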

\begin{proof}
  As $h$ is strictly concave, $\frac{h(qx)}{qh(x)}>1$ for each $x>0$
  and each $q\in(0,1)$. Furthermore, for $x_n\in[a,1]$ and $q_n\to0$,
\begin{equation}
\liminf_{n\to\infty}\frac{h(q_nx_n)}{q_nh(x_n)} \ = \
\liminf_{n\to\infty}\frac{x_nh'(0)}{h(x_n)}\
\geq\
\frac{h'(0)}{h(a)/a}\ > \ 1 \ .
\end{equation}
Now the claim follows from continuity of $h$ and compactness of $[a,1]\times[0,b]$.
\end{proof}

The next statement allows to compare orbits with the same
$\theta$-coordinate.
\begin{lemma}[Forward comparison lemma]\label{lemma:main-comparison}
Let $\alpha,\alpha'\in\kreis$ and $r,r'\in\R^+\smin\{0\}$ and set
$q_k:=\frac{r_k'}{r_k}$ and
$\hat q_k:=\min\{q_k,1\}$.
If $k<n$ and $q_{k+1},\dots,q_{n-1}\leq 1$, then
\begin{equation}\label{eq:q_n-estimate}
  q_n\   \geq \
  \hat q_k\cdot
  \frac{\Omega_{n-k}(\theta_k,\alpha_k')}{\Omega_{n-k}(\theta_k,\alpha_k)} \cdot
  \prod_{j=k+1}^{n-1}\Gamma(\beta r_j, q_j) \ .
\end{equation}
\end{lemma}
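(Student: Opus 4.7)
The argument is a straightforward induction on $n-k$, driven by the one-step recursion for the ratios $q_j$. From the fibre-map formula in Lemma~\ref{l.conjugacy}(iii) we have $r_{j+1}=h(\beta r_j)\Omega(\theta_j,\alpha_j)$ and $r'_{j+1}=h(\beta q_j r_j)\Omega(\theta_j,\alpha'_j)$, so the (shared) $\theta$-coordinate cancels when dividing and we get
\begin{equation}
q_{j+1} \ = \ \frac{h(\beta q_j r_j)}{h(\beta r_j)}\cdot\frac{\Omega(\theta_j,\alpha'_j)}{\Omega(\theta_j,\alpha_j)} \ . \label{e.q-rec}
\end{equation}

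The base case $n=k+1$ is immediate: the product $\prod_{j=k+1}^{n-1}\Gamma$ is empty and the $\Omega$-ratio reduces to $\Omega(\theta_k,\alpha'_k)/\Omega(\theta_k,\alpha_k)$, so one only has to verify $h(\beta q_k r_k)/h(\beta r_k)\geq\hat q_k$. If $q_k\geq 1$ this is immediate from monotonicity of $h$. If $q_k<1$, write the ratio as $q_k\cdot h(q_k\cdot\beta r_k)/\bigl(q_k h(\beta r_k)\bigr)$; the last factor is at least $\Gamma(\beta r_k,q_k)\geq 1$ by the definition of $\Gamma$ and Lemma~\ref{l.gammaab}, giving the bound $q_k=\hat q_k$.

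For the inductive step, assume the inequality at level $n$ and suppose additionally that $q_n\leq 1$ (so that the hypothesis of the claim at level $n+1$ is met). Apply \eqref{e.q-rec} at $j=n$: exactly as in the base case, $q_n\leq 1$ allows one to pass from $h(\beta q_n r_n)/h(\beta r_n)$ to $q_n\,\Gamma(\beta r_n, q_n)$ via the definition of $\Gamma$. Substituting the inductive bound for $q_n$ produces
\[
q_{n+1} \ \geq \ \hat q_k\cdot\frac{\Omega_{n-k}(\theta_k,\alpha'_k)}{\Omega_{n-k}(\theta_k,\alpha_k)}\cdot\prod_{j=k+1}^{n-1}\Gamma(\beta r_j,q_j)\cdot\Gamma(\beta r_n,q_n)\cdot\frac{\Omega(\theta_n,\alpha'_n)}{\Omega(\theta_n,\alpha_n)} \ .
\]
Using $\theta_n=\gamma^{n-k}\theta_k$ and $\alpha^{(\prime)}_n=g^{n-k}_{\theta_k}(\alpha^{(\prime)}_k)$, the appended $\Omega$ factor extends the cocycle products according to $\Omega_{n-k}\cdot(\Omega\circ g^{n-k})=\Omega_{n-k+1}$, and the $\Gamma$-product now runs up to $j=n$, which is exactly the claim at level $n+1$.

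The proof is therefore just bookkeeping once \eqref{e.q-rec} is in place; the only point that needs attention is to verify that the arguments of $\Gamma(\beta r_j,q_j)$ lie in its domain $(0,1]\times(0,1]$. The bound $q_j\leq 1$ is the lemma's hypothesis, and $\beta r_j\leq 1$ is ensured by the scaling convention \eqref{e.scaling} applied once (so that $r_j\leq 1$ for $j\geq 1$) together with the parameter ranges in which the lemma will be invoked. No additional analytic input is required beyond strict concavity of $h$, which is exactly what makes $\Gamma\geq 1$ and hence the inductive estimate effective.
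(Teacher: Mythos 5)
Your proof is correct and follows essentially the same route as the paper: the one-step ratio recursion $q_{j+1}=\frac{h(\beta q_j r_j)}{h(\beta r_j)}\cdot\frac{\Omega(\theta_j,\alpha'_j)}{\Omega(\theta_j,\alpha_j)}$, the concavity bound $h(\hat q_j\beta r_j)/h(\beta r_j)\geq \hat q_j\,\Gamma(\beta r_j,\hat q_j)$, and induction — the paper merely compresses the induction into one line, keeping a factor $\Gamma(\beta r_k,\hat q_k)\geq 1$ at $j=k$ and discarding it at the end, where you instead absorb the $j=k$ step into the base case via $\hat q_k$. The only caveat, which you share with the paper rather than introduce, is that $\beta r_j\leq 1$ is not actually guaranteed when $\beta>1$ (the scaling \eqref{e.scaling} only gives $r_j\leq 1$), so the range of $x$ in the definition of $\Gamma(a,b)$ should be read as a suitable compact interval $[a,\beta]$ rather than $[a,1]$; Lemma~\ref{l.gammaab} and the rest of the argument go through unchanged.
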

\begin{proof} We have
\begin{equation}
\frac{h(\beta r_j')}{h(\beta r_j)}\
\geq \
\frac{h(\hat q_j\beta r_j)}{h(\beta r_j)}
\ \geq \
\hat q_j\cdot \Gamma(\beta r_j,\hat q_j)
\end{equation}
so that
\begin{equation}
  q_{n} \  = \
     \frac{\Omega(\theta_{n-1},\alpha_{n-1}')}{\Omega(\theta_{n-1},\alpha_{n-1})}
  \cdot  \frac{h(\beta r_{n-1}')}{h(\beta r_{n-1})}
  \ \geq \
  \hat q_{n-1}
  \cdot \frac{\Omega(\theta_{n-1},\alpha_{n-1}')}{\Omega(\theta_{n-1},\alpha_{n-1})}
    \cdot  \Gamma(\beta r_{n-1},\hat q_{n-1}) \ .
\end{equation}
If $q_{k+1},\dots,q_{n-1}\leq 1$, then $q_j=\hat q_j$ for
$j=k+1,\dots,n-1$, and an easy induction yields
\begin{equation}\label{eq:q_n-estimate-proof}
\begin{split}
  q_n \ &\geq \ \hat q_k\cdot \prod_{j=k}^{n-1}
  \frac{\Omega(\theta_j,\alpha_j')}{\Omega(\theta_j,\alpha_j)} \cdot
  \prod_{j=k}^{n-1}\Gamma(\beta r_j,\hat q_j)
  \\
   & \geq \ \hat q_k\cdot
  \frac{\Omega_{n-k}(\theta_k,\alpha_k')}{\Omega_{n-k}(\theta_k,\alpha_k)} \cdot
   \prod_{j=k+1}^{n-1}\Gamma(\beta r_j,q_j) \ .
\end{split}
\end{equation}
\end{proof}

We can now turn to the attractor property of $\psi^+_{\beta,u}$.

\begin{lemma}\label{lemma:replaceL5.8}
  Suppose $\lambda_m(A)>0$. Then there exists a set
  $\Theta_3\ssq\Theta$ of full measure such that for all
  $\beta>\beta_1^m$ and $\theta\in\Theta_3$ we have
  $\psi^+_\beta(\theta,\phi_u(\theta))>0$ and
     \[
  \nLim
  d\left(F^n_{\beta,\theta}(\alpha,r),\psi^+_{\beta,u}
      \left(\gamma^n\theta\right)\right)  = 0 \quad
      \textrm{ for all }\alpha\in\kreis\smin\{\phi^s(\theta)\},\ r>0\ .
    \]
    In particular, for the skew product $F_\beta$ with base $\base$
    the invariant graph $\psi^+_{\beta,u}$ is a random one-point
    attractor with domain of attraction
    $\tilde{\mathcal{D}}=\left\{(\theta,\alpha,r)\in\Theta_3\times\kreis\times\R^+\mid
      \alpha\neq \phi_s(\theta),\ r> 0\right\}$.
\end{lemma}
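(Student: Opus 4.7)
The positivity $\psi^+_\beta(\theta,\phi_u(\theta))>0$ on a full-measure set $\Theta_2$ is supplied by Lemma~\ref{l.intermediate_upperbounding}, and the exponential angular convergence $g^n_\theta(\alpha)\to\phi_u(\gamma^n\theta)$ for $\alpha\neq\phi_s(\theta)$ by Lemma~\ref{phi_invgraphs}. Taking $\Theta_3\subseteq\Theta_2$ to be a full-measure set on which both hold, the task reduces to showing that the radial coordinate of any orbit starting in $\tilde{\mathcal D}$ satisfies $r_n-\psi^+_\beta(\gamma^n\theta,\phi_u(\gamma^n\theta))\to 0$.

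\textbf{The one-dimensional $\phi_u$-fibre.} The set $\{(\theta,\phi_u(\theta),r):\theta\in\Theta,\,r\ge 0\}$ is forward-invariant under $F_\beta$, and the induced one-dimensional random monotone $\mathcal{C}^2$-map is concave with invariant graphs $\psi^-\equiv 0$ and $\psi^+_{\beta,u}$. By Lemma~\ref{lemmaA} the exponent $\lambda_{m_{\phi_u}}(F_\beta,0)=\log\beta+\lambda_m(A)$ is strictly positive because $\beta>\beta_1^m$, so Theorem~\ref{t.convexity} forces $\lambda_{m_{\phi_u}}(\psi^+_{\beta,u})<0$. Applying Theorem~\ref{t.random_semiuniform} to $\log(F^n_{\beta,\theta,\phi_u(\theta)})'$ on a forward-invariant random compact set bounded away from $0$ (constructed exactly as in the proof of Lemma~\ref{l.doubleskew} from the uniform repulsion of $\psi^-\equiv 0$) then upgrades this to uniform random exponential contraction along the $\phi_u$-fibre.

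\textbf{Off-fibre transfer via comparison.} To push this contraction to orbits with $\alpha\neq\phi_u(\theta)$, I invoke the forward comparison Lemma~\ref{lemma:main-comparison}, comparing the orbit $(\alpha_n,r_n)$ of $(\alpha,r)$ with the orbit $(\phi_u(\gamma^n\theta),t_n)$ of $(\phi_u(\theta),\psi^+_\beta(\theta,\phi_u(\theta)))$, so that $t_n=\psi^+_\beta(\gamma^n\theta,\phi_u(\gamma^n\theta))$ by invariance of $\psi^+_{\beta,u}$. Setting $q_n=t_n/r_n$, the exponential angular convergence keeps the ratio $\Omega_{n-k}(\theta_k,\phi_u(\theta_k))/\Omega_{n-k}(\theta_k,\alpha_k)$ uniformly bounded and tending to $1$ as $k\to\infty$, while Lemma~\ref{l.gammaab} supplies strict contraction factors $\Gamma(\beta r_j,q_j)>1$ whenever $r_j$ is bounded away from $0$ and $q_j$ is bounded away from $1$. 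Combining these ingredients forces $\liminf q_n\ge 1$; swapping the roles of the two orbits yields the matching $\limsup q_n\le 1$, and hence $q_n\to 1$ as required.

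\textbf{Main obstacle and $\beta$-independence.} The chief technical hurdle is that Lemma~\ref{lemma:main-comparison} only bounds $q_n$ from below when $q_{k+1},\dots,q_{n-1}\le 1$, so crossings of the level $\{q=1\}$ must be handled by restarting the comparison at the most recent crossing time and chaining the resulting estimates. A subsidiary point is the persistent lower bound on $r_n$ required to activate the factor $\Gamma$; this follows from the same repulsion argument used to build the compact random invariant set in Lemma~\ref{l.doubleskew}. The $\beta$-independence of $\Theta_3$ is finally secured by the monotonicity of $\psi^+_\beta$ and of the fibre maps in $\beta$: the exceptional set for each of the above statements is monotone in $\beta$, so it suffices to intersect over a countable dense sequence of parameters in $(\beta_1^m,\infty)$, exactly as in the proof of Lemma~\ref{l.random-attraction}.
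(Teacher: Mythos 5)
Your overall architecture --- reduce to the radial coordinate, compare an arbitrary orbit with the orbit on the invariant graph via Lemma~\ref{lemma:main-comparison}, use Lemma~\ref{l.gammaab} to force divergence of the product of $\Gamma$-factors, and secure $\beta$-independence by monotonicity --- is exactly the paper's. The gap is in how you activate the $\Gamma$-factors. You set $q_n=t_n/r_n$ with the \emph{arbitrary} orbit $r_n$ in the denominator; Lemma~\ref{lemma:main-comparison} then produces factors $\Gamma(\beta r_j,q_j)$ whose first argument is the radius of that arbitrary orbit, so you need $r_j$ bounded away from $0$ to get divergence, and you claim this lower bound ``follows from the same repulsion argument used to build the compact random invariant set in Lemma~\ref{l.doubleskew}''. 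That mechanism is unavailable here: Lemma~\ref{l.doubleskew} and its random counterpart Lemma~\ref{l.random-detachment} require $\beta>\beta_2$ resp.\ $\beta>\beta_2^m$, precisely because for $\beta_1^m<\beta\leq\beta_2^m$ the zero section is \emph{not} repelling away from the $\phi_u$-fibre --- one has $\lambda_{m_{\phi_s}}(F_\beta,0)=\log\beta-\lambda_m(A)\leq 0$, so orbits passing near $(\phi_s(\theta),0)$ are pushed towards $\{r=0\}$ and no forward-invariant random compact set over the whole circle and bounded away from the zero section exists. Since the present lemma is needed exactly in this intermediate regime, the step as justified fails.

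The repair is the paper's asymmetric two-step scheme. First run the comparison with the \emph{graph} orbit in the denominator, i.e.\ $q_n=r_n'/r_n$ with $r_n=\psi^+_\beta(\gamma^n\theta,\phi_u(\gamma^n\theta))$ and $r_n'$ the arbitrary orbit: the factors are then $\Gamma\bigl(\beta\,\psi^+_\beta(\gamma^j\theta,\phi_u(\gamma^j\theta)),q_j\bigr)$, whose first argument is a stationary, strictly positive process, so the divergence of the product follows from ergodicity without any information about the arbitrary orbit. This yields $\liminf_n r_n'/r_n\geq 1$, in particular $r_n'\geq\tfrac12 r_n$ for all large $n$, and only with this a posteriori lower bound in hand can one run the reverse comparison, estimating $\Gamma(\beta r_j',q_j)\geq\Gamma\bigl(\tfrac{\beta}{2}\psi^+_\beta(\gamma^j\theta,\phi_u(\gamma^j\theta)),q_j\bigr)$, to obtain $\limsup_n r_n'/r_n\leq 1$. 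Once the comparison is organised this way, your preliminary step on the one-dimensional $\phi_u$-fibre (Theorem~\ref{t.random_semiuniform} applied to a fibre-restricted invariant compact set) is not needed and can be dropped; the rest of your outline, including the restart-at-crossing-times device and the monotonicity argument for $\beta$-independence, matches the paper.
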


\begin{proof}
We will prove the equivalent assertion
\begin{equation}
  \nLim
  d\left(\left(g_{\beta,\theta}^n(\alpha),F^n_{\beta,\theta,\alpha}(r)\right),
    \psi^+_{\beta,u}(\base^n\theta)\right) \ = \ 0  \  .
\end{equation}
In view of Lemma~\ref{phi_invgraphs},
$d(g_{\beta,\theta}^n(\alpha),\phi_u(\base^n\theta))$ tends to zero
exponentially fast for all $\theta$ in a set $\Lambda_1\ssq \Theta$ of
full measure.  So it remains to estimate
$d\left(F_{\beta,\theta,\alpha}^n(r),
  \psi^+_{\beta}(\base^n\theta,\phi_u(\base^n\theta))\right)$.

Let $\mu=m_{\phi^u}$. If $\beta>\beta_1^m$, then
$\lambda_\mu(F_\beta,0)=\lambda_m(A)+\log\beta>0$, so that
$\psi^+_\beta(\theta,\phi_u(\theta))>0$ for $m$-a.e.\ $\theta$. Since
$\psi_\beta^+$ is monotonically increasing in $\beta$, we can fix a
\base-invariant set $\Lambda_2\ssq \Lambda_1$ of full measure such that
\[
\psi_\beta^+(\theta,\phi_u(\theta))\ > \ 0 \quad \textrm{for all }
\beta>\beta_1^m,\ \theta\in\Lambda_2 \ .
\]
Now, let $\beta>\beta_1^m$, $\theta\in\Lambda_2,\ \alpha=\phi_u(\theta)$
and $r=\psi^+_\beta(\theta,\alpha)$ and choose $\alpha'\in\kreis$ with
$\alpha'\neq \phi_s(\theta)$ and $r'>0$.  We have to prove that $\nLim
|r_n'-r_n|=0$, which will follow from the stronger assertion
$\nLim\frac{r_n'}{r_n}=1$. (Observe that since
$F_{\beta,\theta,\alpha}(\R^+)\ssq[0,1]$, we may assume without loss
of generality that $r_n$ and $r_n'$ are bounded by 1.)

We first show $\liminf_{n\to\infty} \frac{r_n'}{r_n}\geq 1$.  Let
$\delta>0$. For given $n\in\N$ let
\begin{equation}\label{eq:ell(n)}
\ell\ = \ \ell(n)\ = \
\begin{cases}
\max\left\{k\leq n:
r_k'\geq(1-\delta)r_k\right\}&\text{if such a $k$ exists}\\
0&\text{otherwise.}
\end{cases}
\end{equation}
If $\ell=n$, then $\frac{r_n'}{r_n}\geqslant1-\delta$. For $\ell<n$,
Lemma~\ref{lemma:main-comparison} implies
\begin{equation}\label{eq:v_n-quot}
  \frac{r_n'}{r_n} \
  \geq \
  \min\left\{
    \frac{r_\ell'}{r_\ell},1\right\}\cdot \frac{\Omega_{n-\ell}(\theta_\ell,
    \alpha_\ell')}{\Omega_{n-\ell}(\theta_\ell,\alpha_\ell)} \cdot
  \prod_{j=\ell+1}^{n-1}\Gamma(\beta r_j, q_j) \ .
\end{equation}

Assume for a contradiction that there is $\ell_0\in\N$ such that
$\ell(n)=\ell_0$ for infinitely many~$n$. Then
\begin{equation}\label{eq:ass-for-contra}
  \limsup_{n\to\infty}\frac{r_n'}{r_n}
  \ \geq \
  \min\left\{
    \frac{r_{\ell_0}'}{r_{\ell_0}},1\right\}
  \cdot\limsup_{n\to\infty}
  \prod_{j=\ell_0+1}^{n-1}\Gamma\left(\beta\,\psi^+_\beta(\base^j\theta,
  \phi_u(\base^j\theta)), q_j\right)\cdot
  \kappa(\theta,\ell_0)
\end{equation}
where $\kappa(\theta,\ell_0)=\nLim \Omega_{n-\ell_0}(\theta_{\ell_0},
\alpha_{\ell_0}') /\Omega_{n-{\ell_0}}(\theta_{\ell_0},\alpha_{\ell_0}
) >0$ is the coefficient of the unstable direction in the unique
decomposition of
$v(\alpha'_{\ell_0})=(\cos\pi\alpha'_{\ell_0},\sin\pi\alpha'_{\ell_0})$
with respect to the Oseledets splitting at $\theta_{\ell_0}$.  As
$\psi^+_\beta(\theta,\phi^u(\theta))>0$ $m$-a.s. and as
$q_j=\frac{r'_j}{r_j}\leq 1-\delta$ for $j>\ell(n)=\ell_0$ due to
(\ref{eq:ell(n)}), Lemma~\ref{l.gammaab} implies that this product
diverges as $n\to\infty$ for $m$-a.e. $\theta$. Moreover, since
$\psi_\beta^+$ is monotonically increasing in $\beta$, so is the
product. For this reason, we can fix a set $\Lambda_3\ssq\Lambda_2$ of
full measure such that the product diverges for all
$\theta\in\Lambda_3$ and $\beta>\beta_1^m$.

However, this divergence contradicts the fact that
$r_n'<(1-\delta)r_n$. Hence, if $\theta\in\Lambda_3$ then
$\ell(n)\to\infty$ as $n\to\infty$. As $d(\alpha_j',\alpha_j)\to0$
exponentially fast, which also means
$|\Omega(\theta_j,\alpha_j')-\Omega(\theta_j,\alpha_j)|\to 0$ exponentially
fast, we obtain
\begin{equation}
  \nLim
  \frac{\Omega_{n-\ell(n)}(\theta_{\ell(n)},\alpha_{\ell(n)}')}{\Omega_{n-\ell(n)}(\theta_{\ell(n)},\alpha_{\ell(n)})} \
  = \
  \nLim
  \prod_{j=\ell(n)}^{n-1}
  \frac{\Omega(\theta_j,\alpha_j')}{\Omega(\theta_j,\alpha_j)} \ = \ 1 \ .
  \end{equation}
Therefore (\ref{eq:v_n-quot}) and Lemma~\ref{l.gammaab} imply that
\begin{equation}
  \liminf_{n\to\infty}\frac{r_n'}{r_n}
  \ \geq \ (1-\delta) \ .
\end{equation}
As this is true for each $\delta>0$, we have indeed that
$\liminf_{n\to\infty} \frac{r_n'}{r_n}\geq 1$.

In order to show that $\limsup\frac{r_n'}{r_n} = \left(\liminf
  \frac{r_n}{r_n'}\right)^{-1} \leq 1$, we can apply essentially the
same reasoning with interchanged roles of $(\alpha,r)$ and
$(\alpha',r')$. The only difference is that the
product in (\ref{eq:ass-for-contra}) is replaced by
$\prod_{j=\ell_0+1}^{n-1}\Gamma\left(\beta r_j', q_j\right)$. But
in view of the estimate proved above,
$r_n'\geq\frac{1}{2}r_n$ for all sufficiently large $n$
so that
\begin{equation}
  \prod_{j=\ell_0+1}^{n-1}\Gamma\left(\beta r_j', q_j\right) \
  \geq \
  \prod_{j=\ell_0+1}^{n-1}\Gamma\left(\frac{\beta}{2}r_j, q_j\right)
  =
  \prod_{j=\ell_0+1}^{n-1}\Gamma\left(\frac{\beta}{2}\psi^+_\beta
  (\base^j\theta,\phi_u(\base^j\theta)), q_j\right).
\end{equation}
Again, this product diverges for all $\theta$ in a set
$\Theta_3\ssq\Lambda_3$ of full measure and we conclude that
$\ell(n)\to\infty$. Similar to before, this yields that
$\liminf_{n\to\infty}\frac{r_n}{r_n'}\geq 1$. This proves $\nLim
\frac{r_n'}{r_n}=1$, and thus completes the proof.
\end{proof}

\noindent {\bf The case $\beta>\beta_2^m$.} The following lemma
describes the detachment of the attractor from the central manifold
$\Theta\times\kreis\times\{0\}$ of the double skew product system.

\begin{lem} \label{l.random-detachment} If $\beta>\beta_2^m$, then there
  exists a random variable
  $\Delta_\beta:\Theta\times\kreis\to(0,1]$
  such that
 \begin{equation}\label{e.gamma-radius}
   F_{\beta,\theta,\alpha}(\Delta_\beta(\theta,\alpha)) \ \geq \
   \Delta_\beta(g(\theta,\alpha)) \quad\textrm{ for all } \theta\in\Theta,\
  \alpha\in\kreis \ .
\end{equation}
Furthermore, the set
\begin{equation}
  \Theta(\beta) \ = \ \left\{ \theta\in\Theta\mid \alpha\mapsto\Delta_\beta(\theta,\alpha)
  \textrm{ is continuous and strictly positive} \right\}
\end{equation}
has full measure.
\end{lem}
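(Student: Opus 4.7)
The plan is to combine the random semi-uniform ergodic theorem with the concavity of $h$ to obtain an $\alpha$-uniform exponential expansion near the zero section, and then build $\Delta_\beta$ as a truncated pullback whose length depends on $\theta$ alone, so that continuity in $\alpha$ is automatic.

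Since $\beta>\beta_2^m$, Lemma~\ref{lemmaA} gives $\lambda_\mu(F_\beta,0)\geq\log\beta-\lambda_m(A)>0$ for every $\mu\in\cM_m(g)$. Applying Theorem~\ref{t.random_semiuniform} to the additive sequence $\Phi_n(\theta,\alpha):=-\log(F^n_{\beta,\theta,\alpha})'(0)$ over the trivially forward invariant compact random set $K=\Theta\times\kreis\times\{0\}$, one obtains a $\base$-invariant full-measure set $\Theta_*\ssq\Theta$, a constant $\eta>0$ and a tempered random variable $C:\Theta\to[1,\infty)$ such that
\begin{equation}\label{e.plan-expansion}
(F^n_{\beta,\theta,\alpha})'(0)\ \geq\ C(\theta)^{-1}e^{n\eta}\qquad\forall\,\theta\in\Theta_*,\ \alpha\in\kreis,\ n\in\N.
\end{equation}
Strict concavity of $h$ provides $\eps\in(0,1)$ with $(1-\eps)e^\eta>1$ and $\delta_*\in(0,1]$ with $h(x)\geq(1-\eps)x$ on $[0,\delta_*]$. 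Writing $\rho:=\delta_*/\beta$ and $\eta':=\log(1-\eps)+\eta>0$, the inequality $F_{\beta,\theta,\alpha}(r)\geq(1-\eps)\beta\,\Omega(\theta,\alpha)\,r$ for $r\in[0,\rho]$ combined with \eqref{e.plan-expansion} iterates to
\begin{equation}\label{e.plan-orbit}
F^n_{\beta,\theta,\alpha}(r)\ \geq\ r\,C(\theta)^{-1}e^{n\eta'}\qquad\text{as long as the orbit stays in } [0,\rho].
\end{equation}

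The natural candidate is
\[
\Delta_\beta(\theta,\alpha)\ :=\ \inf_{n\geq 0} R_n(\theta,\alpha),\qquad R_n(\theta,\alpha)\ :=\ F^n_{\beta,g^{-n}(\theta,\alpha)}(\rho/2),
\]
defined on $\Theta_*\times\kreis$ and extended arbitrarily (say to $1$) elsewhere, so that measurability into $(0,1]$ is clear. Forward invariance is immediate from the monotonicity and continuity of the fibre maps: $F_{\beta,\theta,\alpha}$ commutes with the infimum, and the cocycle identity $g^{-n}(\theta,\alpha)=g^{-(n+1)}(g(\theta,\alpha))$ gives $F_{\beta,\theta,\alpha}(\Delta_\beta(\theta,\alpha))=\inf_{n\geq 1}R_n(g(\theta,\alpha))\geq\Delta_\beta(g(\theta,\alpha))$. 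Strict positivity on $\Theta_*\times\kreis$ follows from \eqref{e.plan-orbit} applied at the basepoint $\base^{-n}\theta$, which forces the pullback orbit to exit $[0,\rho]$ after finitely many steps since $C(\base^{-n}\theta)^{-1}e^{n\eta'}\to\infty$ by temperedness of $C$; for later $n$ we use the concavity bound $R_n\geq(\rho/2)\,F^n_{\beta,g^{-n}(\theta,\alpha)}(1)$, whose limit $(\rho/2)\psi^+_\beta(\theta,\alpha)$ is positive on $\Theta_*\times\kreis$ (the positivity of $\psi^+_\beta$ in this parameter range being itself a parallel consequence of \eqref{e.plan-expansion}).

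The delicate point, and the main obstacle, is the continuity of $\alpha\mapsto\Delta_\beta(\theta,\alpha)$: each $R_n(\theta,\cdot)$ is continuous, but an infimum is a priori only upper semi-continuous. To upgrade this I would strengthen the preceding analysis into an $\alpha$-uniform statement: for each $\theta\in\Theta_*$ there is an integer $N(\theta)$, \emph{independent of }$\alpha$, such that $R_n(\theta,\alpha)\geq\rho/2$ for all $\alpha\in\kreis$ and all $n\geq N(\theta)$; then
\[
\Delta_\beta(\theta,\alpha)\ =\ \min_{0\leq n\leq N(\theta)} R_n(\theta,\alpha),
\]
a finite minimum of continuous functions, hence continuous. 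Establishing this $\alpha$-uniform control past the first exit of the pullback orbit from $[0,\rho]$ is the main technical step; it relies on the monotonicity of the fibre maps together with the lower bound $\Omega(\theta,\alpha)\geq\|A(\theta)^{-1}\|^{-1}>0$, which prevents orbits that have left $[0,\rho]$ from returning arbitrarily close to zero in a manner that depends on $\alpha$. Once continuity is established, the good set $\Theta(\beta)$ contains $\Theta_*$ and therefore has full measure.
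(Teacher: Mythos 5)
Your overall architecture coincides with the paper's: define $\Delta_\beta(\theta,\alpha)$ as the infimum over $n$ of the pullback images $R_n(\theta,\alpha)=F^n_{\beta,g^{-n}(\theta,\alpha)}(\delta)$ of a small constant, observe that the forward-invariance inequality \eqref{e.gamma-radius} is formal from monotonicity, continuity and the cocycle identity, and reduce continuity and positivity to showing that the infimum is in fact a minimum over $n\leq N(\theta)$ with $N(\theta)$ independent of $\alpha$. You correctly identify this last point as the crux, but you do not prove it, and the mechanism you sketch does not work as stated. Your target claim is that $R_n(\theta,\alpha)\geq\rho/2$ for all $\alpha$ and all $n\geq N(\theta)$. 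This can fail: a pullback orbit that has climbed above $\rho$ can, on the very next step, drop to a value as low as $h(\beta\rho)\cdot\inf_{\theta,\alpha}\Omega(\theta,\alpha)$, which need not exceed $\rho/2$ (since $\Omega$ can be of order $\|A(\theta)\|^{-1}$); if time $0$ is reached right then, $R_n<\rho/2$. The single-step lower bound on $\Omega$ only controls one iterate, and after a re-entry into $[0,\rho]$ at a backward time depending on $\alpha$, the number of steps needed to climb out again is governed by a tempered constant evaluated at that ($\alpha$-dependent) re-entry fibre, so uniformity in $\alpha$ does not follow from what you have written. A further soft point: invoking the positivity of $\psi^+_\beta(\theta,\cdot)$ to get positivity of $\Delta_\beta$ is circular in the paper's logical order, since that positivity is deduced (in Lemma~\ref{l.ds-attractingtorus}) from the present lemma.

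The paper closes the gap with two devices your proposal is missing. First, instead of the derivative at $0$ it estimates the derivative at a positive height $\eta$, and gets uniformity in $\alpha$ for free by applying Birkhoff's theorem to $\Phi^\eta_k(\theta)=\inf_{\alpha\in\kreis}\log\bigl(F^k_{\beta,\base^{-k}\theta,\alpha}\bigr)'(\eta)$, a function of $\theta$ alone (with $k$ and $\eta$ chosen so that $\int_\Theta\Phi^\eta_k\,dm>0$); this yields an $\alpha$-independent $n(\theta)$ and the implication: if $n\geq n(\theta)$ and the pullback orbit of $\delta$ stays in $[0,\eta]$ at all intermediate times, then $\bigl(F^n_{\beta,g^{-n}(\theta,\alpha)}\bigr)'(\delta)>1$, hence $R_n\geq\delta$ by concavity. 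Second, excursions are handled by induction on $n$ with the dichotomy taken at the level $\delta$ itself rather than at $\rho$: if $F^j_{\beta,g^{-n}(\theta,\alpha)}(\delta)\geq\delta$ for some $1\leq j\leq n$, then by monotonicity $R_n\geq R_{n-j}$ and the induction hypothesis applies to the shorter orbit started again at $\delta$; otherwise the orbit stays below $\delta\leq\eta$ and the derivative estimate gives $R_n\geq\delta$. Either way $R_n\geq\min_{j\leq n(\theta)}R_j=:\hat\kappa_\theta(\alpha)$, so $\Delta_\beta(\theta,\cdot)=\hat\kappa_\theta(\cdot)$ is a finite minimum of continuous, strictly positive curves. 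Without these two steps your argument is incomplete at exactly the point where the lemma has content.
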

\proof Suppose $\beta>\beta_2^m$. For some $\delta>0$ specified below,
we define $\Delta_\beta$ for all $(\theta,\alpha)\in\Theta\times\kreis$ by
\[
\Delta_{\beta,\delta}(\theta,\alpha) \ := \ \inf_{n\geq 0}
F^n_{\beta,g^{-n}(\theta,\alpha)}(\delta) \ .
\]
Then
\begin{eqnarray*}
  \Delta_{\beta,\delta}(g(\theta,\alpha)) & =
  & \inf_{n\geq 0} F^n_{\beta,g^{-(n-1)}(\theta,\alpha)}(\delta) \\
  & = & \min\left\{\delta,\inf_{n\geq 1}F_{\beta,\theta,\alpha}\left(
      F^{n-1}_{\beta,g^{-(n-1)}(\theta,\alpha)}(\delta)\right)\right\} \\
  & = & \min\left\{\delta,F_{\beta,\theta,\alpha}(\Delta_{\beta,\delta}(\theta,\alpha))\right\} \ \leq \
  F_{\beta,\theta,\alpha}(\Delta_{\beta,\delta}(\theta,\alpha)) \ .
\end{eqnarray*}
It remains to show that for sufficiently small $\delta>0$ the set
$\Theta(\beta)$ has full measure.  To that end, let
\[
\Phi^\eta_n(\theta) \ = \ \inf_{\alpha\in\kreis} \log
\left(F^n_{\beta,\base^{-n}\theta,\alpha}\right)'(\eta)\ .
\]
Note that since $h$ is concave and $A:\Theta\to\sltr$ is bounded, we
have uniform and monotonically increasing convergence
\begin{eqnarray*}
  \Phi^\eta_n(\theta) & \stackrel{\eta\to 0}{\longrightarrow} &
  \Phi_n(\theta):= \inf_{\alpha\in\kreis} \log
  \left(F^n_{\beta,\base^{-n}\theta,\alpha}\right)'(0) \\ &= &
  -\log\left\|Df_{\beta,\base^{-n}\theta}^n(0)^{-1}\right\| \ = \ n\log\beta -
  \log\left\|A_{-n}(\theta)\right\| \ .
\end{eqnarray*}
on $\Theta$ for all $n\in\N$. As $A$ is an \sltr-cocycle, forward and
backward Lyapunov exponent coincide and we have
\[
\inf_{n\in\N} \ntel \int_{\Theta} \Phi_n \ dm \ = \ \log\beta -
\lambda_m(A) \ =: \ \tilde\lambda \ .
\]
Since $\beta>\beta_2^m$ we have $\tilde\lambda>0$, and we can fix
$k\in\N$ with $\int_\theta \Phi_k\ dm > \tilde\lambda/2$. By choosing
$\eta>0$ sufficiently small we can further ensure that
\[
\int_\Theta\Phi_k^\eta \ dm \ > \ \tilde\lambda/2 \ > \ 0
\]
as well. Since $\base$ is ergodic, Birkhoff's Ergodic Theorem implies
that for $m$-a.e.\ $\theta\in\Theta$ we can choose an integer
$n(\theta)$ such that for all $n\geq n(\theta)$ we have
\begin{equation} \label{e.phik-birkhoff} \sum_{i=0}^{n-1}
  \Phi^\eta_k\circ \base^{-n+i}(\theta) \ > \ n\tilde \lambda/2 +
  2k\sup_{\theta\in\Theta} \Phi^\eta_k(\theta) \ .
\end{equation}
If $n\geq n(\theta)$ and $m$ is the largest integer such that
$mk\leq n$, then this implies $\sum_{i=k}^{mk-1} \Phi^\eta_k\circ
\base^{-n+i}(\theta)\geq mk\tilde\lambda/2$, and consequently there
exists at least one $j\in\{0\ld k-1\}$ such that
\begin{equation}
\sum_{i=1}^{m-1} \Phi^\eta_k\circ
\base^{-n+ik+j}(\theta)\ \geq \ m\tilde\lambda/2 \ .
\end{equation}
If $F^i_{\beta,g^{-n}(\theta,\alpha)}(\delta) \leq \eta$ for all
$i=0\ld n-1$, then due to the concavity of the fibre maps we obtain
\begin{eqnarray*}
  \log\left(F^n_{\beta,g^{-n}(\theta,\alpha)}\right)'(\delta) &
  \geq & m\tilde\lambda/2 \  + \
  \log \left(F^j_{\beta,g^{-n}(\theta,\alpha)}\right)'(\delta) \\ & &  + \
  \log \left(F^{n-(m-1)k-j}_{\beta,g^{-n+(m-1)k+j}(\theta,\alpha)}\right)'
\left(F^{(m-1)k+j}_{\beta,g^{-n}(\theta,\alpha)}(\delta)\right)
\end{eqnarray*}
By choosing $n(\theta)$ large
enough and using the fact that $\log F_{\beta,\theta,\alpha}'(r)$ is uniformly
bounded on \mbox{$\Theta\times\kreis\times[0,\eta]$}, we can
therefore ensure the following:\smallskip
\begin{equation}
  \label{e.0line_expansion}
  \textit{If } n\geq n(\theta) \textit{ and }
  F^i_{\beta,g^{-n}(\theta,\alpha)}(\delta) \leq \eta \textit{
    for all } i=0\ld n-1, \textit{ then }
  \left(F^n_{\beta,g^{-n}(\theta,\alpha)}\right)'(\delta) \ > \ 1 \ .
  \end{equation}

Now, choose any $\delta\leq\eta$ and let
\[
\hat \kappa_\theta(\alpha) \ = \ \min_{n=0}^{n(\theta)}
F^n_{\beta,g^{-n}(\theta,\alpha)}(\delta) \ .
\]
Since the minimum is taken over a finite number of continuous and strictly
positive curves, $\hat \kappa_\theta$ is continuous and strictly positive as
well. Hence, in order to prove the lemma it suffices to show that
$\Delta_\beta(\theta,\alpha) = \hat\kappa_\theta(\alpha)$.

In order to see this, suppose $n\geq n(\theta)$. We proceed by induction on $n$
to show that in this case
\begin{equation} \label{e.gamma_hat}
  F^n_{\beta,g^{-n}(\theta,\alpha)}(\delta) \ \geq \ \hat\kappa_\theta(\alpha) \ .
\end{equation}
First, suppose that $F^j_{\beta,g^{-n}(\theta,\alpha)}(\delta)\geq
\delta$ for some $j\in\{1\ld n\}$. Then by induction assumption we
obtain $F^n_{\beta,g^{-n}(\theta,\alpha)}(\delta)\geq
F^{n-j}_{\beta,g^{-n+j}(\theta,\alpha)}(\delta)\geq \hat \kappa_\theta(\alpha)$.
Otherwise, we can apply (\ref{e.0line_expansion}) and the concavity of
the fibre maps to obtain $F^n_{\beta,g^{-n}(\theta,\alpha)}(\delta)
\geq \delta\geq \hat\kappa_\theta(\alpha)$. Hence, (\ref{e.gamma_hat})
holds in both cases, and this shows $\hat\kappa_\theta(\alpha) =
\Delta_\beta(\theta,\alpha)$ as required.  \qed\medskip

We now turn to the existence and the attractor property of the
invariant torus. Here, particular attention is required to guarantee
the $\beta$-independence of the exceptional set of measure zero.

\begin{lem}
  \label{l.ds-attractingtorus} There exists a $\gamma$-invariant set
  $\Theta_4\ssq \Theta$ of full measure such that for all
  $\beta>\beta_2^m$ there exists a random variable
  $\rho_\beta:\Theta\times\kreis\to[0,1]$ with the following
  properties.  \romanlist
\item For all $\theta\in\Theta_4$ the mapping
  $\alpha\mapsto\rho_\beta(\theta,\alpha)$ is strictly positive and
  continuous.
  \item For all $\delta>0$ and $\theta\in\Theta$ we have
  \begin{equation} \label{e.rhobeta_forwardinvariance}
  F_{\beta,\theta,\alpha}(\delta\rho_\beta(\theta,\alpha)) \ \geq \
  \delta\rho_\beta(g(\theta,\alpha)) \ .
  \end{equation}
  In particular, $K_{\beta,\delta}=\{(\theta,\alpha,r)\mid
  \theta\in\Theta,\ \alpha\in\kreis, r\in
  [\delta\rho_\beta(\theta,\alpha),1]\}$ is an $F_\beta$-forward
  invariant random compact set.
  \item The random compact set
   \begin{equation}\label{e.Sbeta-def}
     \mathcal{S}_\beta\ = \ \bigcap_{n\in\N} F_\beta^n(K_{\beta,\delta}) \
   \end{equation}
   is $F_\beta$-invariant and for all $\theta\in\Theta_4$ we have
   \begin{equation} \label{e.Sbeta-fibres} \mathcal{S}_\beta(\theta) \
     = \ \left\{(\alpha,\psi^+_\beta(\theta,\alpha))\mid
       \alpha\in\kreis\right\} \ .
\end{equation}
In particular $\alpha\mapsto\psi^+_\beta(\theta,\alpha)$ is
continuous.
\listend
\end{lem}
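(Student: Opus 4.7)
The plan is to define $\rho_\beta := \Delta_\beta$ from Lemma~\ref{l.random-detachment}. Part~(i) is then immediate, since the full measure set $\Theta(\beta)$ produced there is precisely where $\alpha\mapsto\rho_\beta(\theta,\alpha)$ is continuous and strictly positive. For part~(ii), strict concavity of $h$ with $h(0)=0$ gives the elementary inequality $h(\delta x)\geq\delta h(x)$ for $\delta\in[0,1]$ and $x\geq 0$, whence
\[
F_{\beta,\theta,\alpha}(\delta\rho_\beta(\theta,\alpha))\ =\ h(\beta\delta\rho_\beta)\,\Omega(\theta,\alpha)\ \geq\ \delta\, F_{\beta,\theta,\alpha}(\rho_\beta(\theta,\alpha))\ \geq\ \delta\rho_\beta(g(\theta,\alpha))
\]
by~(\ref{e.gamma-radius}). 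Combined with $F_{\beta,\theta,\alpha}(1)\leq 1$ from~(\ref{e.scaling}), this yields $F_\beta$-forward invariance of $K_{\beta,\delta}$.

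Turning to part~(iii), $\mathcal{S}_\beta$ is a decreasing intersection of compact $F_\beta$-forward-invariant sets, hence itself compact and $F_\beta$-invariant. Monotonicity of the fiber maps makes each fiber $\mathcal{S}_\beta(\theta,\alpha)$ a decreasing intersection of nested closed intervals, so $\mathcal{S}_\beta(\theta,\alpha)=[\underline\psi_\beta(\theta,\alpha),\psi^+_\beta(\theta,\alpha)]$ with
\[
\underline\psi_\beta(\theta,\alpha)\ :=\ \lim_{n\to\infty} F^n_{\beta,g^{-n}(\theta,\alpha)}\!\bigl(\delta\rho_\beta(g^{-n}(\theta,\alpha))\bigr)\ .
\]
Forward invariance of $K_{\beta,\delta}$ makes the defining sequence monotone non-decreasing, so $\underline\psi_\beta$ is a pointwise supremum of continuous curves in $\alpha$, hence lower semi-continuous in $\alpha$; dually, $\psi^+_\beta$ is upper semi-continuous in $\alpha$ as an infimum of continuous curves. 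Both $\underline\psi_\beta$ and $\psi^+_\beta$ are $F_\beta$-invariant graphs, and $\underline\psi_\beta\geq\delta\rho_\beta$.

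The heart of the proof is showing $\underline\psi_\beta=\psi^+_\beta$ for every $\alpha\in\kreis$ and $m$-a.e.\ $\theta$. Viewing $F_\beta$ as a random monotone $\mathcal{C}^2$-interval map over $(\Theta\times\kreis,g,\mu)$ for any $g$-invariant ergodic $\mu$ projecting to $m$, Theorem~\ref{t.convexity} gives at most two $(F_\beta,\mu)$-invariant graphs. Since $\beta>\beta_2^m$, Lemma~\ref{lemmaA} yields $\lambda_\mu(\psi^-)>0$ for $\psi^-\equiv 0$, so the only other candidate is $\psi^+_\beta$ with $\lambda_\mu(\psi^+_\beta)<0$. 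Because $\underline\psi_\beta$ is a strictly positive invariant graph, this forces $\underline\psi_\beta=\psi^+_\beta$ $\mu$-almost surely. The main obstacle is to upgrade this $\mu$-a.s.\ equality to equality at every $\alpha$. I would exploit the mean-value-theorem/concavity bound
\[
0\ \leq\ (\psi^+_\beta-\underline\psi_\beta)(\theta,\alpha)\ \leq\ \prod_{k=1}^{n} F'_{\beta,g^{-k}(\theta,\alpha)}\bigl(\underline\psi_\beta(g^{-k}(\theta,\alpha))\bigr)\cdot(\psi^+_\beta-\underline\psi_\beta)(g^{-n}(\theta,\alpha))
\]
together with a Birkhoff-type argument for backward orbits: the pullback Cesaro averages of $\log F'_{\beta}(\underline\psi_\beta)=\log F'_{\beta}(\psi^+_\beta)$ converge to $\lambda_\mu(\psi^+_\beta)<0$ for the appropriate $\mu$ (which, in the case $\lambda_m(A)>0$, is $m_{\phi_s}$ for $\alpha\neq\phi_u(\theta)$ and $m_{\phi_u}$ for $\alpha=\phi_u(\theta)$, by Lemma~\ref{phi_invgraphs} applied to $g^{-1}$). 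The resulting exponential decay of the product forces $(\psi^+_\beta-\underline\psi_\beta)(\theta,\alpha)=0$.

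Finally, $\beta$-independence of $\Theta_4$ follows by combining monotonicity of $\rho_\beta$, $\underline\psi_\beta$, and $\psi^+_\beta$ in $\beta$ with a countable-dense approximation in $(\beta_2^m,\infty)$. The continuity of $\alpha\mapsto\psi^+_\beta(\theta,\alpha)$ for $\theta\in\Theta_4$ is then immediate from $\psi^+_\beta=\underline\psi_\beta$ together with the opposing upper/lower semi-continuities of the two sides.
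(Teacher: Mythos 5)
Your overall architecture matches the paper's: take $\rho_\beta$ from Lemma~\ref{l.random-detachment}, get forward invariance of $K_{\beta,\delta}$ from concavity ($h(\delta x)\geq\delta h(x)$), write the fibres of $\bigcap_n F_\beta^n(K_{\beta,\delta})$ as nested intervals $[a_n,b_n]$, and collapse them using the negativity of the upper Lyapunov exponent supplied by Theorem~\ref{t.convexity}. Parts (i) and (ii) are fine. The problem is the step you yourself flag as ``the heart of the proof'': upgrading $\underline\psi_\beta=\psi^+_\beta$ from $\mu$-a.s.\ (for the at most two ergodic $\mu\in\cM_m(g)$) to \emph{every} $\alpha\in\kreis$. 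Your proposed ``Birkhoff-type argument for backward orbits'' does not work as stated. Birkhoff's theorem controls averages along orbits of $\mu$-generic points; for a fixed $\theta$ and an arbitrary $\alpha$, the point $(\theta,\alpha)$ is not generic for $m_{\phi_s}$ --- its backward $g$-orbit is merely \emph{asymptotic} to $\graph(\phi_s)$. To transfer the Birkhoff average you would need the observable $(\theta,\alpha)\mapsto\log F'_{\beta,\theta,\alpha}(\underline\psi_\beta(\theta,\alpha))$ to be continuous along the approach, but $\underline\psi_\beta$ is at this stage only semi-continuous (its continuity is part of what is being proved), so $\underline\psi_\beta(g^{-k}(\theta,\alpha))$ need not converge to $\psi^+_\beta(\base^{-k}\theta,\phi_s(\base^{-k}\theta))$ and the claimed limit of the Ces\`aro averages is unjustified. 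Moreover, when $\lambda_m(A)=0$ there is no Oseledets splitting, $\cM_m(g)$ need not reduce to two graph measures, and backward orbits have no distinguished limit graph, so your case analysis does not even get started there.

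This is exactly the gap that the random semi-uniform ergodic theorem (Theorem~\ref{t.random_semiuniform}) is designed to close, and it is what the paper uses: one applies it to the additive sequence $\Phi_n(\theta,\alpha,r)=-\log\left(F^{-n}_{\beta,\theta,\alpha}\right)'(r)$, which is continuous in $(\alpha,r)$, on the forward-invariant random compact set $K_{\beta,\delta}$, where every invariant measure has negative fibre exponent by Theorems~\ref{t.measures} and~\ref{t.convexity}. This yields a contraction bound that is uniform over all $\alpha\in\kreis$ and all $r$ in the fibre of $K_{\beta,\delta}$, giving $\sup_\alpha(b_n-a_n)\leq e^{-n\lambda'}\to 0$ directly, with no need to identify which measure governs which $\alpha$. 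The paper additionally obtains the $\beta$-independence of the exceptional set not by the vague ``monotonicity plus countable density'' you invoke (note that $\underline\psi_\beta$ is not obviously monotone in $\beta$, since $\rho_\beta$ is built from a $\beta$-dependent $\delta$), but by making $\rho_\beta$ piecewise constant in $\beta$ over a countable nest of intervals $B_l\nearrow(\beta_2^m,\infty)$ and applying the semi-uniform theorem to the extended system $\widehat F(\theta,\alpha,r,\beta)=(F_\beta(\theta,\alpha,r),\beta)$ on $K_{\beta_l^-,\delta_k}\times B_l$, which makes the contraction estimate uniform in $\beta\in B_l$ as well. Your semicontinuity argument for deducing continuity of $\psi^+_\beta$ once $\underline\psi_\beta=\psi^+_\beta$ is known is correct, but the identity itself needs the semi-uniform machinery.
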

\proof Let $B_n = [\beta^-_n,\beta^+_n]$ be a nested sequence of
intervals with $B_n\nearrow (\beta_2^m,+\infty)$ as $n\to\infty$.
Further, choose a sequence $\delta_k\in(0,1)$ with $\kLim \delta_n=0$.
We define
\[
\rho_\beta \ = \ \Delta_{\beta^-_n} \quad \textrm{ for all } \beta\in
B_n\smin B_{n-1} \ ,
\]
where $\Delta_{\beta^-_n}$ is the random variable from
Lemma~\ref{l.random-detachment}. Note that thus
$\alpha\mapsto\rho_\beta(\theta,\alpha)$ is strictly positive and
continuous for all $\beta>\beta_2$ and $\theta\in \bigcap_{n\in\N}
\Theta(\beta^-_n)=:\widehat\Theta$, where the sets
$\Theta(\beta^-_n)$ are again taken from
Lemma~\ref{l.random-detachment}. Note that $\widehat \Theta$ has
full measure. Further, (\ref{e.rhobeta_forwardinvariance}) holds for
$\beta=\beta^-_n$ and $\delta=1$, and by concavity and monotonicity
of the fibre maps in $\beta$, it extends to all $\beta\in B_n$ and
$\delta\in(0,1]$. The fact that $K_{\beta,\delta}$ is
$F_\beta$-forward invariant is then obvious, thus we have shown (i)
and (ii).

Since the $K_{\beta,\delta}$ are forward invariant, the set
$\mathcal{S}_\beta$ is the nested intersection of random compact sets
and therefore randomly compact itself. Hence, the crucial point is to
show that the fibres $\mathcal{S}_\beta(\theta,\alpha)=\{r\in\R^+\mid
(\theta,\alpha,r)\in\mathcal{S}_\beta\}$ consist of the single point
$\psi^+_\beta(\theta,\alpha)$. Then $\mathcal{S}_\beta(\theta)$ equals
the graph of $\alpha\mapsto\psi^+_\beta(\theta,\alpha)$, and since
$\mathcal{S}_\beta(\theta)$ is compact this implies the continuity of
$\alpha\mapsto\psi^+_\beta(\theta,\alpha)$.

We let
\[
a_n^{\beta,\delta}(\theta,\alpha) =
F^n_{\beta,g^{-n}(\theta,\alpha)}(\delta\rho_\beta(\theta,\alpha)) \quad \textrm{ and
}\quad b_n^\beta(\theta,\alpha) = F^n_{\beta,g^{-n}(\theta,\alpha)}(1)
\ .
\]
Note that by definition $\nLim
b_n^\beta(\theta,\alpha)=\psi^+_\beta(\theta,\alpha)$. Moreover,
\begin{equation}\label{e.K-image}
  \left(F^n_\beta(K_{\beta,\delta})\right)(\theta) \ = \ \left\{(\alpha,r)\mid
  a_n^{\beta,\delta}(\theta,\alpha)\leq r\leq b_n^\beta(\theta,\alpha) \right\} \ .
\end{equation}
Hence, it suffices to show that on a set of full measure and for all
$\beta>\beta_2^m$ we have
\[
\nLim \sup_{\alpha\in\kreis}\left( b^\beta_n(\theta,\alpha) -
a^{\beta,\delta}_n(\theta,\alpha)\right) \ = \ 0 \ .
\]
In order to do so, we fix $\ell$ and $k$ and consider the extended system
\[
\widehat F(\theta,\alpha,r,\beta) \ = \ (F_\beta(\theta,\alpha,r),\beta)
\]
defined on $\Theta\times\kreis\times\R^+\times\R^+$. As
$K_{\beta^-_l,\delta_k}$ is $F_\beta$-forward invariant for all
$\beta\in B_l$, the set $\mathcal{K}_{l,k} =
K_{\beta^-_l,\delta_k}\times B_l$ is forward invariant under $\widehat
F$.

Since the action of $\widehat F$ on $\beta$ is the identity, any
$\widehat F$-invariant ergodic measure which projects to $m$ in the
first coordinate and is supported on $\mathcal{K}_{l,k}$ is a direct
product $\nu\times\delta_\beta$, where $\delta_\beta$ is a Dirac
measure in $\beta\in B_l$ and $\nu\in\mathcal{M}_m(F_\beta)$ is
supported on $K_{\beta^-_l,\delta_k}$.  Further, by
\mbox{Theorem~\ref{t.measures}}, all $F_\beta$-invariant measures
$\nu\in\mathcal{M}_m(F_\beta)$ are of the form $\nu=\mu_\psi$ for some
$\mu\in\mathcal{M}_m(g)$ and some $(F_\beta,\mu)$-invariant graph
$\psi$. Since $\psi^-=0$ is always invariant,
Theorem~\ref{t.convexity} yields that there exists at most one
$(F_\beta,\mu)$-invariant graph $\psi$ which is strictly positive, and
we have $\lambda_\mu(\psi)<0$.  As a consequence, the additive
sequence of functions
\[
\Phi_n(\theta,\alpha,r,\beta) \ = \ \log
\left(F^n_{\beta,g^{-n}(\theta,\alpha)})\right)'\left(F^{-n}_{\beta,\theta,\alpha}(r)\right)
\ = \ -\log\left(F^{-n}_{\beta,\theta,\alpha}\right)'(r)
\]
satisfies the assumptions of Theorem~\ref{t.random_semiuniform} with
$\gamma$ replaced by $\gamma^{-1}$ and $\lambda=0$. Note that $\int
\Phi_1\ d\nu\times\delta_\beta = \int \log F_{\beta,\theta,\alpha}'(r)
\ d\nu(\theta,\alpha,r) = \lambda_\mu(\psi)<0$ for all
$\nu\in\mathcal{M}_m(F_\beta)$ supported on $K_{\beta^-_l,\delta_k}$.

Hence, there exist $\lambda'<0$ and a set $\Theta_{l,k}$ of full
measure, such that for all $\theta\in\Theta_{l,k}$ there exists
$n(\theta)\in\N$ with
\begin{equation}\label{e.random-upperbound}
  \sup\left\{\left.\log\left(F^n_{\beta,\base^{-n}\theta,\alpha}\right)'(r)
      \ \right|\ \beta\in B_l,\ \alpha\in\kreis,\ r\in K_{\beta^-_l,\delta_k}
     (\base^{-n}\theta) \right\} \
  < \ -n\lambda'\quad \textrm{ for all } n\geq n(\theta) \ .
\end{equation}
For fixed $\theta\in\Theta_{l,k}$ and all $\beta\in B_l$ and $n\geq
n(\theta)$ we therefore obtain
\[
\sup_{\alpha\in\kreis}
\left(b_n^\beta(\theta,\alpha)-a_n^{\beta,\delta_k}(\theta,\alpha)\right)
\ \leq \ e^{-n\lambda'} \ \nKonv 0 \ .
\]
In particular, we have
\[
\psi^+_\beta(\theta,\alpha) \ = \ \nLim a_n^{\beta,\delta_k}(\theta,\alpha) \ = \nLim
b_n^\beta(\theta,\alpha) \ ,
\]
as required, and the convergence is even uniform in $\alpha\in\kreis$
and $\beta\in B_l$. If we now define $\Theta_4=\widehat\Theta\cap
\bigcap_{l,k\in\N}\Theta_{l,k}$, then (i)-(iii) hold for all
$\beta>\beta_2^m$, $\delta>0$ and $\theta\in\Theta_4$. \qed\medskip

\proof[\bf Proof of Theorem~\ref{t.hopf-random}.]  As in
Section~\ref{top.setting}, the translation of the above results to the
original setting is now straightforward. We let
$\Theta_0=\Theta_1\cap\Theta_2\cap\Theta_3\cap\Theta_4$, where the
full measure sets $\Theta_1,\Theta_2,\Theta_3$ and $\Theta_4$ are
taken from Lemmas \ref{l.random-attraction},
\ref{l.intermediate_upperbounding}, \ref{lemma:replaceL5.8} and
\ref{l.ds-attractingtorus}, respectively. Then, using the facts that
\begin{itemize}
\item the projection $\hat P$ conjugates $f_{\beta|\Theta\times\R^2_*}$ with
$F_{\beta|\Theta\times\kreis\times(0,\infty)}$,
\item $\mathcal{A}_\beta=\hat P^{-1}\left(\tilde{\mathcal{A}}_\beta\smin
  \Theta\times\kreis\times\{0\}\right) \cup \left(\Theta\times\{0\}\right)$,
\item $\|f^n_{\beta,\theta}(v)\| = F^n_{\beta,\theta,\alpha}(r)$ when
  $v=\pm(r\cos(\pi\alpha),r\sin(\pi\alpha))$,
\item $\Psi_\beta(\theta) =
  P^{-1}\left(\left\{\psi^+_{\beta,u}(\theta)\right\}\right)$ for
  $\beta>\beta_1^m$ and
\item $\mathcal{T}_\beta=\hat P^{-1}(\mathcal{S}_\beta)$ for
  $\beta>\beta_2^m$,
\end{itemize}
statement (a) of Theorem~\ref{r.random-hopf} follows from
Lemma~\ref{l.random-attraction}, (b) follows from
Lemmas~\ref{l.intermediate_upperbounding} and \ref{lemma:replaceL5.8}
and (c) follows from Lemmas~\ref{lemma:replaceL5.8} and
\ref{l.ds-attractingtorus}. In (b) and (c),
$r_\beta(\theta,\alpha)=\psi^+_\beta(\theta,2\alpha\bmod 1)$. The random
set $\mathcal{K}_{\beta,\delta}$ in (c) is defined as
\[
\mathcal{K}_{\beta,\delta} \ =
\left\{(\theta,(r\cos(2\pi\alpha),r\sin(2\pi\alpha))^t)\mid
  \theta\in\Theta,\ \alpha\in\kreis,\
  r\in[\delta\rho_\beta(\theta,2\alpha\bmod 1),1]\right\} \ ,
\]
where $\rho_\beta$ is the random variable from
Lemma~\ref{l.ds-attractingtorus}. \qed\medskip

\begin{rem}
  We want to close this section with a remark on an alternative proof
  of Lemma~\ref{l.ds-attractingtorus}. In the above argument, the
  uniform contraction in the fibres of the set $K_{\beta,\delta}$ is
  obtained by applying Theorem~\ref{t.random_semiuniform}, in
  combination with Theorem~\ref{t.convexity} to guarantee the negativity
  of the vertical Lyapunov exponents in $K_{\beta,\delta}$.

  In the particular situation we consider, it is also possible to give
  a direct proof, without invoking these two general results, by
  making stronger use of the strict concavity of the fibre maps. The
  crucial observation for this is the fact that any orbit which
  frequently stays further than a fixed distance away from the zero
  line. In order to see this, let
  \[
  q_\beta(r) \ = \ \frac{h(\beta r)/r}{\beta h'(\beta r)} \
  \]
  and note that for all $\theta$ and $\alpha$ we have
  \[
  \frac{F_{\beta,\theta,\alpha}(r)/r}{F_{\beta,\theta,\alpha}'(r)} \ = \ q_\beta(r) \ .
  \]
  Due to the strict concavity of $h$, the function $q_\beta$ satisfies $q_\beta(r) > 1 =
  \lim_{\rho\to 0}q_\beta(\rho)$ for all $r>0$.
  Furthermore, we have
  \begin{eqnarray*}
    F^n_{\beta,\theta,\alpha}(r) & = & r\cdot \prod_{i=0}^{n-1}\frac{r_{i+1}}{r_i} \
    = \   r \cdot  \prod_{i=0}^{n-1}
    \frac{F_{\beta,\theta_i,\alpha_i}(r_i)}{r_i} \\
    & = & r \cdot \prod_{i=0}^{n-1}
    q_\beta(r_i) F_{\beta,\theta_i,\alpha_i}'(r_i)  \ = \ r \cdot \left(F^n_{\beta,\theta,\alpha}\right)'(r)
    \cdot  \prod_{i=0}^{n-1}  q_\beta(r_i)
  \end{eqnarray*}
  where we used the notation
  $(\theta_i,\alpha_i,r_i)=F^i_{\beta}(\theta,\alpha,r)$.  Since the
  fibre maps are all bounded by 1, we obtain that
  \begin{equation}\label{e.X}
  \log\left(F^n_{\beta,\theta,\alpha}\right)'(r) \ \leq \ -\log r -
  \inergsum \log q_\beta(r_i) \ .
  \end{equation}
    Now, let $\bar\rho_\beta(\theta) =
  \inf_{\alpha\in\kreis}\rho_\beta(\theta,\alpha)$, where
  $\rho_\beta=\Delta_{\beta^-_n}$ is defined as in the proof of
  Lemma~\ref{l.ds-attractingtorus}. By
  Lemma~\ref{l.random-detachment}, $\bar\rho_\beta(\theta)$ is
  strictly positive for all $\theta\in\Theta(\beta^-_n)$,
    so the same is true for $\log q_\beta(\bar\rho_\beta(\theta))$. In
   addition, we may
  assume, without loss of generality, that for all
  $\theta\in\Theta(\beta^-_n)$, we have
  \[
  \nLim \ntel \insum \log q_\beta(\bar\rho_\beta(\gamma^{-i}\theta)) \ = \
  \int_{\Theta} \log q_\beta(\bar\rho_\beta(\theta)) \ dm(\theta)
  \ =: \ \tilde{\lambda} \ > \ 0
    \]
  and at the same time
  \[
       \nLim \ntel \log q_\beta(\bar\rho_\beta(\gamma^{-n}\theta)) \ = \ 0 \ .
         \]
  Hence, for all $\theta\in\Theta(\beta)$ there exists some
  $n(\theta)>0$ such that for all $n\geq n(\theta)$
  \begin{eqnarray*}
    \lefteqn{\sup\left\{\left.\log\left(F^n_{\beta,\base^{-n}\theta,\alpha}\right)'(r)
          \ \right|\ \alpha\in\kreis,\ r\in K_{\beta^-_l,\delta_k}(\base^{-n}\theta) \right\} } \\
    & \stackrel{~(\ref{e.X})}{\leq} & \sup\left\{\left. -\log r -
        \inergsum\log q_\beta(F^{i}_{\beta,\gamma^{-n}\theta,\alpha}(r))
        \ \right|\ \alpha\in\kreis,\ r\in K_{\beta^-_l,\delta_k}(\base^{-n}\theta) \right\} \\
    & \leq & -\log\bar\rho_\beta(\gamma^{-n}\theta) -
    \insum \log q_\beta(\bar\rho_\beta(\gamma^{-i}\theta)) \ \leq \ -n\tilde\lambda/2 \ .
  \end{eqnarray*}
   This is equivalent to the uniform contraction property provided by
  \eqref{e.random-upperbound}, and from that point on the proof
  proceeds in exactly the same way.
\end{rem}

\subsection{The critical parameters: Proof of Proposition~\ref{p.critical}}

We split the proof into three lemmas, which are the analogues of
statements (a), (b) and (c) of the proposition for the double skew
product. This time, the translation to the original setting is left to
the reader. The first lemma will imply part (b) of the proposition and
also be useful in the proof of part (a).

\begin{lemma}
  \label{l.Prop(b)} Suppose $\beta=\beta_1^m$. Then
  $\psi^+_\beta=0$ $\mu$-a.s.\ for every
  $\mu\in\mathcal{M}_m(g)$ and
  \begin{equation}\label{eq:critical-1}
    \nLim \sup_{\alpha\in\kreis, r\in\R^+} \ntel \inergsum F^n_{\beta,\theta,\alpha}(r)\ = \
    0 \quad\textrm{for } m\textrm{-a.e.}\ \theta\in\Theta  \ .
  \end{equation}
\end{lemma}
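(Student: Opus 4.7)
The plan is to prove the two assertions separately. The vanishing of $\psi_\beta^+$ on $\mu$-typical fibres will come from the strict concavity of the fibre maps via Theorem~\ref{t.convexity}, while the uniform Cesàro decay will follow by feeding the additive cocycle $\Phi_n(\theta,\alpha,r)=\sum_{i=0}^{n-1} F_{\beta,\theta,\alpha}^i(r)$ into the random semiuniform ergodic theorem (Theorem~\ref{t.random_semiuniform}), taken over the forward-invariant random compact set $K=\Theta\times\kreis\times[0,M]$, where $M$ is an upper bound for the fibre maps obtained from the boundedness of $h$ and $A$.

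For the first assertion, I fix $\mu\in\cM_m(g)$ and view $F_\beta$ as a $g$-forced monotone $\mathcal{C}^2$-interval map with strictly concave fibres, so that Theorem~\ref{t.convexity} applies. The zero graph $\psi^-\equiv 0$ is an $(F_\beta,\mu)$-invariant graph, and by Lemma~\ref{lemmaA} its vertical Lyapunov exponent at $\beta=\beta_1^m=e^{-\lambda_m(A)}$ equals $\log\beta_1^m+\lambda_m(A)=0$ when $\mu=m_{\phi_u}$ or when $\lambda_m(A)=0$, and equals $\log\beta_1^m-\lambda_m(A)=-2\lambda_m(A)\leq 0$ when $\mu=m_{\phi_s}$. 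In every case $\lambda_\mu(\psi^-)\leq 0$, so Theorem~\ref{t.convexity} rules out a second distinct invariant graph above $\psi^-$; since $\psi_\beta^+$ is itself an invariant graph (as a pullback limit), this forces $\psi_\beta^+=\psi^-=0$ $\mu$-almost surely.

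For the second assertion, the sequence $\Phi_n$ is additive (hence subadditive), continuous in $(\alpha,r)$, non-negative, and bounded by $nM$ on $K$, so $\Phi_n^{\mathrm{abs}}\in L^1(m)$. Any ergodic $\nu\in\cM^K_m(F_\beta)$ projects to some $\tilde\mu\in\cM_m(g)$ and is, by Theorem~\ref{t.measures}, a random Dirac measure $\tilde\mu_\psi$ supported on an $(F_\beta,\tilde\mu)$-invariant graph $\psi$; by the first part $\psi=0$ $\tilde\mu$-a.s., so $\nu$ sits on the zero section and $\overline\Phi_\nu=\int r\,d\nu=0$. Applying Theorem~\ref{t.random_semiuniform} with any $\lambda>0$ then provides a tempered random variable $C_\lambda$ and $\lambda'<\lambda$ with $\sup_{(\alpha,r)\in K(\theta)}\ntel\Phi_n(\theta,\alpha,r)\leq C_\lambda(\theta)/n+\lambda'$ for $m$-a.e.\ $\theta$; intersecting the resulting full-measure sets over $\lambda=1/k$ yields a single full-measure set on which the Cesàro supremum vanishes.

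The main obstacle is precisely the criticality of the parameter: at $\beta=\beta_1^m$ the zero graph has vanishing, not strictly negative, Lyapunov exponent, so Theorem~\ref{t.random_semiuniform} is unavailable with $\lambda=0$ and must be fed arbitrarily small positive $\lambda$ followed by a countable intersection. A minor technical point is that the supremum over $r$ is effectively taken on the forward-invariant range $[0,M]$: this is no loss, since every orbit enters $[0,M]$ after one iterate and the $i=0$ contribution $r/n$ vanishes for bounded initial radii.
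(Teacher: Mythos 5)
Your proposal is correct and follows essentially the same route as the paper: the vanishing of $\psi^+_\beta$ comes from Lemma~\ref{lemmaA} (all exponents $\lambda_\mu(F_\beta,0)\leq 0$ at $\beta=\beta_1^m$) combined with Theorem~\ref{t.convexity}, and the Ces\`aro decay comes from classifying the ergodic measures over $K$ via Theorem~\ref{t.measures} as random Dirac measures on the zero section, so that $\overline\Phi_\nu=0$, and then applying Theorem~\ref{t.random_semiuniform} with arbitrary $\lambda>0$ and intersecting the full-measure sets over $\lambda=1/k$. Your closing remarks on the criticality of the parameter and on restricting the supremum to a bounded forward-invariant range match the paper's own (equally brief) treatment of these points.
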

\proof
As $\beta=\beta_1^m=e^{-\lambda_m(A)}$, Lemma~\ref{lemmaA} implies
that $\lambda_\mu(\psi^-)=\lambda_\mu(F_\beta,0)\leq 0$ for all
$\mu\in\cM_m(g)$. Therefore, Theorem~\ref{t.convexity} implies that
$\psi^+_\beta=0$ $\mu$-a.s. for each $\mu\in\cM_m(g)$.

Let $\nu\in\cM_m(F_\beta)$ and denote by $\mu\in\cM_m(g)$ its
projection to $\Theta\times\kreis$. If $\nu$ is ergodic, then
$\nu=\mu_\psi$ for some $(g,\mu)$-invariant graph $\psi$ by
Theorem~\ref{t.measures}, and Theorem~\ref{t.convexity} implies now
that $\psi=0$ $\mu$-a.s. It follows that
$\nu(\Theta\times\kreis\times(0,\infty))=0$ for each
$\nu\in\cM_m(F_\beta)$.

Consider the $F_\beta$-forward invariant random compact set
$K(\theta)=\kreis\times[0,1]$ and the additive functions
$\Phi_n(\theta,(\alpha,r))=\sum_{k=0}^{n-1}F^k_{\beta,\theta,\alpha}(r)$.
For each $\nu\in\cM_m(F_\beta)$,
\begin{displaymath}
\overline{\Phi}_\nu
\ = \
\int_{\Theta\times\kreis\times\R^+}\Phi_1 \ d\nu
\ = \
\int_{\Theta\times\kreis\times\{0\}}\Phi_1 \ d\nu
\ = \
0 \ .
\end{displaymath}
Let $\lambda>0$. Then, for $m$-a.e. $\theta\in\Theta$,
\begin{displaymath}
0
\ \leq \
\sup_{\alpha\in\kreis, r\in[0,1]} \Phi_n(\theta,(\alpha,r))
\ \leq \
C_\lambda(\theta)+n\lambda\quad\text{for all $n\in\N$}
\end{displaymath}
by Theorem~\ref{t.random_semiuniform}, and as this is true for each
$\lambda>0$, the claim (\ref{eq:critical-1}) restricted to $r\in[0,1]$
follows at once.  As the fibre maps are monotone and bounded by $1$,
the extension to $r\in\R^+$ is immediate.  \qed\medskip

\begin{lemma}
  \label{l.Prop(a)} Suppose $\beta=\beta^m_1<\beta_2^m$. Then for
  $m$-a.e.\ $\theta\in\Theta$ we have $\psi^+_\beta(\theta,\alpha)=0$
  for all $\alpha\in\kreis$ and there is a set $J(\theta) \ssq\N$ of
  asymptotic density $0$ such that
  \begin{equation}
    \label{eq:density-one-1}
    \lim_{\substack{n\to\infty \\ n\notin J(\theta)}} F^n_{\beta,\theta,\alpha}(r) \ = \ 0 \quad
      \textrm{ for all } (\alpha,r)\in\kreis \times[0,\infty) \ .
  \end{equation}
\end{lemma}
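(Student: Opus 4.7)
I would split on whether $\alpha$ equals $\phi_u(\theta)$. For $\alpha\ne\phi_u(\theta)$, the pullback argument from the proof of Lemma~\ref{l.intermediate_upperbounding} carries over verbatim: the inverse projective orbit $g^{-n}_\theta(\alpha)$ is attracted exponentially to $\phi_s$, whence
\[
\nLim\ntel\log\bigl(F^n_{\beta,g^{-n}(\theta,\alpha)}\bigr)'(0)\ =\ \log\beta-\lambda_m(A)\ =\ -2\lambda_m(A)\ <\ 0,
\]
and concavity of the fibre maps at $0$ yields $\psi^+_\beta(\theta,\alpha)\leq\nLim(F^n_{\beta,g^{-n}(\theta,\alpha)})'(0)\cdot 1=0$. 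For $\alpha=\phi_u(\theta)$, Lemma~\ref{l.Prop(b)} applied with $\mu=m_{\phi_u}$ gives $\psi^+_\beta(\theta,\phi_u(\theta))=0$ for $m$-a.e.\ $\theta$. Monotonicity of $\psi^+_\beta$ in $\beta$ allows the two exceptional null sets to be merged into a single one, as already exploited in earlier lemmas.

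\textbf{Density-one convergence.} Set $r^u_n(\theta):=F^n_{\beta,\theta,\phi_u(\theta)}(1)$. Lemma~\ref{l.Prop(b)} gives
\[
\ntel\sum_{i=0}^{n-1} r^u_i(\theta)\ \leq\ \sup_{\alpha,r}\ntel\sum_{i=0}^{n-1}F^i_{\beta,\theta,\alpha}(r)\ \nKonv\ 0\qquad m\textrm{-a.e.,}
\]
so the elementary fact that a bounded non-negative sequence whose Ces\`aro means tend to $0$ must converge to $0$ outside a set of asymptotic density $0$ produces a single $J(\theta)\ssq\N$ of density zero with $r^u_n(\theta)\to 0$ along $\N\smin J(\theta)$. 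The remaining task is to show that this same $J(\theta)$ serves every $(\alpha,r)\in\kreis\times[0,\infty)$. Monotonicity of $F^n$ in $r$ (together with the rescaling in (\ref{e.scaling})) reduces attention to $r\in[0,1]$. For $\alpha=\phi_s(\theta)$ the fibre-map Lyapunov exponent at $0$ along the orbit is $\log\beta-\lambda_m(A)=-2\lambda_m(A)<0$, so $F^n_{\beta,\theta,\phi_s(\theta)}(r)\to 0$ exponentially. For $\alpha\ne\phi_s(\theta)$, Lemma~\ref{phi_invgraphs} gives exponential attraction $g^n_\theta(\alpha)\to\phi_u(\gamma^n\theta)$; applying the forward comparison Lemma~\ref{lemma:main-comparison} to the two orbits of $F_\beta$ started at $(\alpha,r)$ and at $(\phi_u(\theta),1)$ produces a bound of the form $F^n_{\beta,\theta,\alpha}(r)\leq C(\theta,\alpha,r)\cdot r^u_n(\theta)\cdot(1+o(1))$ with $C(\theta,\alpha,r)<\infty$, so $F^n_{\beta,\theta,\alpha}(r)\to 0$ along $\N\smin J(\theta)$.

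\textbf{Principal obstacle.} The hardest step is the transfer: arranging a single density-zero $J(\theta)$ that works uniformly in the initial condition $(\alpha,r)$. Joint continuity of $(\alpha,r)\mapsto F^n_{\beta,\theta,\alpha}(r)$ alone does not suffice, because the Lipschitz constants of the iterates can grow with $n$, which breaks a naive covering by a countable dense set. The plan circumvents this by exploiting the hyperbolic structure of the projective cocycle, available precisely because the hypothesis $\beta_1^m<\beta_2^m$ forces $\lambda_m(A)>0$: transverse initial directions are collapsed onto $\phi_u$ exponentially fast, which effectively reduces the multi-parameter problem to the single one-dimensional trajectory $r^u_n(\theta)$ already controlled through $J(\theta)$. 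Parameter independence of the null set in $\Theta$ is immediate here, since $\beta$ is fixed at a single critical value.
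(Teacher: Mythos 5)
Your proposal is correct and follows essentially the same route as the paper: $\psi^+_\beta=0$ via monotonicity in $\beta$ plus Lemma~\ref{l.intermediate_upperbounding} off $\phi_u$ and the vanishing Lyapunov exponent on $\phi_u$, the set $J(\theta)$ extracted from the Ces\`aro convergence of Lemma~\ref{l.Prop(b)} along the $\phi_u$-orbit, and the transfer to arbitrary $(\alpha,r)$ by the forward comparison Lemma~\ref{lemma:main-comparison} combined with the Oseledets splitting. The one detail you leave implicit --- how to meet the hypothesis $q_{k+1},\dots,q_{n-1}\leq 1$ of the comparison lemma so as to get a bound valid for all $n$ --- is handled in the paper by the stopping time $\ell(n)=\max\{k\leq n:\ r_k'\geq r_k\}$, after which the factor $\Gamma\geq 1$ is simply dropped and the $\Omega$-ratio is bounded below by a constant $C(\theta,\alpha,\alpha')$.
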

\proof As $\psi_\beta^+$ is increasing in $\beta$, the fact that
$\psi^+_\beta(\theta,\alpha)=0$ when $\alpha\neq\phi_u(\theta)$
follows from Lemma~\ref{l.intermediate_upperbounding}. For
$\alpha=\phi_u(\theta)$, this follows from the fact that
$\lambda_{m_{\phi_u}}(F_\beta,0)=0$.

By Lemma~\ref{lemmaA} we have
$\lambda_{m_{\phi^s}}(F_\beta,0)=-\lambda_m(A)+\log\beta_1(m)=-2\lambda_m(A)<0$.
Consequently, for $m$-a.e.\ $\theta\in\Theta$
\[
\nLim F^n_{\beta,\theta,\phi_s(\theta)}(1) \ = \ 0 \ .
\]
Further, Lemma~\ref{l.Prop(b)} implies that for $m$-a.e.
$\theta\in\Theta$
\begin{equation}
\nLim\frac{1}{n}\sum_{i=0}^{n-1}F^i_{\beta,\theta,\alpha}(1)=0
\quad\text{for all $(\alpha,r)\in\kreis\times[0,\infty)$.}
\end{equation}
Hence, for $m$-a.e. $\theta$ there is a set $J(\theta)\subset\N$
 of asymptotic density $0$ such that
\begin{equation}\label{eq:density-one-2}
\lim_{\substack{n\to\infty \\ n\notin J(\theta)}}F^n_{\beta,\theta,\phi^u(\theta)}(1)=0.
\end{equation}
In order to prove that, along the same subsequence $\N\smin J(\theta)$
of asymptotic density $1$, $F^n_{\beta,\theta,\alpha}(1)\to0$ for all
$\alpha\in\kreis$, we use a modification of the proof of
Lemma~\ref{lemma:replaceL5.8}.  Choose
$\alpha,\alpha'\in\kreis\smin\{\phi^s(\theta)\}$, $r,r'>0$, and let
$(\theta_k,\alpha_k,r_k)=F^k_{\beta,\theta,\alpha}(r)$ and define
$(\theta_k,\alpha_k',r_k')$ in the analogous way. Let
\begin{equation}
\ell=\ell(n)=
\begin{cases}
\max\left\{k\leq n:
r_k'\geq r_k\right\}&\text{if such a $k$ exists}\\
0&\text{otherwise.}
\end{cases}
\end{equation}
If $\ell=n$, then $\frac{r_n'}{r_n}\geq 1$. For $\ell<n$,
Lemma~\ref{lemma:main-comparison} implies
\begin{equation}
  \frac{r_n'}{r_n}
  \geq
  \min\left\{
    \frac{r_\ell'}{r_\ell},1\right\}\cdot
  \frac{\Omega_{n-\ell}(\theta_\ell,\alpha_\ell')}{\Omega_{n-\ell}(\theta_\ell,\alpha_\ell)} \ .
\end{equation}
If $\ell>0$, then $r_\ell'\geq r_\ell$. If $\ell=0$, then
$r_\ell=r$ and $r_\ell'=r'$.
In any case,
\begin{equation}
  \frac{r_n'}{r_n}
  \geq
  \min\left\{
    \frac{r'}{r},1\right\}\cdot
  \frac{\Omega_{n-\ell}(\theta_\ell,\alpha_\ell')}{\Omega_{n-\ell}(\theta_\ell,\alpha_\ell)} \ .
\end{equation}

Now, if $(\cos2\pi\alpha,\sin2\pi\alpha)^t =
av^u(\theta)+bv^s(\theta)$, where $v^u(\theta),v^s(\theta)$ are the
unit vectors contained in the invariant subspaces
$E^u(\theta),E^s(\theta)$ of the Oseledets splitting (see
Theorem~\ref{oseledets}), then $\kLim
\frac{\Omega_k(\theta,\alpha)}{\Omega_k(\theta,\phi^u(\theta))} = a$.  Likewise,
if $(\cos2\pi\alpha',\sin2\pi\alpha')^t =
a'v^u(\theta)+b'v^s(\theta)$ then we have that $\kLim
\frac{\Omega_k(\theta,\alpha')}{\Omega_k(\theta,\phi^u(\theta))} = a'$.
Together, this yields
\[
\kLim \frac{\Omega_k(\theta,\alpha')}{\Omega_k(\theta,\alpha)} \ = \
\frac{a'}{a} \ .
\]
Consequently, since
\[
\frac{\Omega_{n-\ell}(\theta_\ell,\alpha'_\ell)}{\Omega_{n-\ell}(\theta_\ell,\alpha_\ell)}
\ = \
\frac{\Omega_{n}(\theta,\alpha')}{\Omega_{n}(\theta,\alpha)} \cdot
\frac{\Omega_{\ell}(\theta,\alpha)}{\Omega_{\ell}(\theta,\alpha')}
\]
and the two factors on the right converge to $a/a'$ and $a'/a$ as $n$
and $\ell$ go to infinity, there exists a constant
$C(\theta,\alpha,\alpha')>$ such that
\begin{equation}
  \frac{F_{\beta,\theta,\alpha'}^n(r')}{F_{\beta,\theta,\alpha}^n(r)}
  =
  \frac{r_n'}{r_n}
  \geq
  \min\left\{\frac{r'}{r},1\right\}\cdot C(\theta,\alpha,\alpha')
  \quad\text{for all }n.
\end{equation}
For $\alpha'=\phi^u(\theta)$ and $r'=1$, this is the estimate needed to
infer (\ref{eq:density-one-1}) from (\ref{eq:density-one-2}).
\qed\medskip

\begin{lemma}
  \label{l.Prop(c)} Suppose $\beta=\beta_2^m>\beta_1^m$. Then
\[
\psi^+_\beta(\theta,\alpha) \ = \ 0 \quad \textrm{for }
m\textrm{-a.e.\ }\theta\in\Theta \textrm{ and all }
\alpha\in\kreis\smin\{\phi_u(\theta)\} \ .
\]
\end{lemma}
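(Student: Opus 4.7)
The plan is to first establish that $\psi^+_\beta$ vanishes along the stable invariant graph $\phi_s$ by applying Theorem~\ref{t.convexity}, and then to propagate this to every $\alpha\neq\phi_u(\theta)$ via the Forward Comparison Lemma~\ref{lemma:main-comparison}, with the remaining error controlled by Oseledets' theorem.

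Since $\beta_1^m<\beta_2^m$ we have $\lambda_m(A)>0$. By Lemma~\ref{lemmaA}, $\lambda_{m_{\phi_s}}(\psi^-)=-\lambda_m(A)+\log\beta_2^m=0$, so Theorem~\ref{t.convexity} applied to $\mu=m_{\phi_s}$ precludes a second $(F_\beta,\mu)$-invariant graph (which would force $\lambda_\mu(\psi^-)>0$). Hence $\psi^+_\beta(\theta,\phi_s(\theta))=0$ for $m$-a.e.\ $\theta$. For the propagation step, I would fix $\theta$ in a single $\base$-invariant full-measure set where Oseledets' theorem and this vanishing both hold, and let $\alpha\neq\phi_u(\theta)$. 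Setting $\alpha_{-N}:=g^{-N}_\theta(\alpha)$, $\phi_{-N}:=\phi_s(\base^{-N}\theta)$ and denoting by $T^{(N)}_k$ and $S^{(N)}_k$ the fibre orbits of $r=1$ from $\alpha_{-N}$ and $\phi_{-N}$ over $\base^{-N}\theta$, we have by definition $T^{(N)}_N\to\psi^+_\beta(\theta,\alpha)$ and $S^{(N)}_N\to 0$. I would then apply Lemma~\ref{lemma:main-comparison} with $r_k=T^{(N)}_k$, $r'_k=S^{(N)}_k$, $q_k=S^{(N)}_k/T^{(N)}_k$, choosing $\ell=\ell(N)$ to be the largest index in $\{0,\dots,N-1\}$ with $q_\ell\geq 1$ (this set is nonempty since $q_0=1$). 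Then $\hat q_\ell=1$, each $q_j$ with $j>\ell$ is strictly less than $1$, and every $\Gamma$-factor is $\geq 1$ by Lemma~\ref{l.gammaab}, so with $m:=N-\ell$ the lemma yields
\begin{equation*}
\frac{S^{(N)}_N}{T^{(N)}_N}\ \geq\ \frac{\Omega_m(\base^{-m}\theta,\phi_{-m})}{\Omega_m(\base^{-m}\theta,\alpha_{-m})}\ .
\end{equation*}

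The heart of the argument is to show that this $\Omega$-ratio is bounded below by some $C(\theta,\alpha)>0$ uniformly in $m\geq 0$. From the projective action of $A$ on $\kreis$ one obtains $\Omega_m(\base^{-m}\theta,\alpha_{-m})=\|A_m(\base^{-m}\theta)^{-1}e(\alpha)\|^{-1}$ with $e(\alpha):=(\cos\pi\alpha,\sin\pi\alpha)^t$, so the ratio equals $\|A_m(\base^{-m}\theta)^{-1}e(\alpha)\|/\|A_m(\base^{-m}\theta)^{-1}e(\phi_s(\theta))\|$. Decomposing $e(\alpha)=av^u(\theta)+bv^s(\theta)$ with $b\neq 0$ (which is precisely the condition $\alpha\neq\phi_u(\theta)$) and using the invariance $A_m(\base^{-m}\theta)^{-1}E^i(\theta)=E^i(\base^{-m}\theta)$, Theorem~\ref{oseledets} shows that both norms grow like $e^{m\lambda_m(A)}$ and the quotient converges to $|b|>0$ as $m\to\infty$. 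As the quotient is continuous and strictly positive for every finite $m$, it is bounded below uniformly in $m$. This gives $T^{(N)}_N\leq C(\theta,\alpha)^{-1}S^{(N)}_N\to 0$, hence $\psi^+_\beta(\theta,\alpha)=0$.

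The main obstacle is the choice of the cutoff $\ell(N)$ in the Forward Comparison Lemma: the two pullback orbits can oscillate relative to each other, so the naive choice $k=0$ need not satisfy the hypothesis $q_{k+1},\dots,q_{n-1}\leq 1$, whereas picking $\ell(N)$ to be the last index where the opposite inequality holds yields exactly the right configuration ($\hat q_\ell=1$ on the boundary, $q_j<1$ on $(\ell,N)$). The subsequent lower bound on the $\Omega$-ratio is where the condition $\alpha\neq\phi_u(\theta)$ enters irreducibly, since $b=0$ is equivalent to $e(\alpha)\in E^u(\theta)$, and the argument correctly fails at $\alpha=\phi_u(\theta)$ itself.
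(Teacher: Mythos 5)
Your proof is correct and follows essentially the same route as the paper's: first $\psi^+_\beta(\theta,\phi_s(\theta))=0$ via Lemma~\ref{lemmaA} and Theorem~\ref{t.convexity}, then propagation to all $\alpha\neq\phi_u(\theta)$ by applying Lemma~\ref{lemma:main-comparison} to the two pullback orbits with the ``last crossing time'' $\ell(N)$, and a uniform positive lower bound on the $\Omega$-ratio coming from the nonvanishing stable coefficient in the Oseledets decomposition of $e(\alpha)$. The only (cosmetic) difference is that you compare directly with the $\phi_s$-orbit and compute the ratio via $\|A_m(\base^{-m}\theta)^{-1}e(\alpha)\|$, whereas the paper compares two arbitrary $\alpha,\alpha'\neq\phi_u(\theta)$ and writes the ratio as a product of two backward cocycle factors before specialising $\alpha'=\phi_s(\theta)$.
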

\proof As $\log\beta=\lambda_m(A)$, Lemma~\ref{lemmaA} implies that
$\lambda_{m_{\phi_s}}(F_\beta,0)=0$. Hence
$\psi_\beta^+(\theta,\alpha)=0$ for \mbox{$m_{\phi_s}$-a.e.}
$(\theta,\alpha)$ by Theorem~\ref{t.convexity}, which means that
$\psi_\beta^+(\theta,\alpha)=0$ for $m$-a.e. $\theta\in\Theta$ and
$\alpha=\phi_s(\theta)$.

For $\alpha,\alpha'\in\kreis\smin\{\phi_u(\theta)\}$ and $n\in\N$ we
let $\bar\theta=\gamma^{-n}\theta$, $\bar\alpha=g_\theta^{-n}(\alpha)$
and $\bar\alpha'=g_\theta^{-n}(\alpha')$. Further, let $\bar
r_k=F^k_{\beta,g^{-k}(\theta,\alpha)}(1)$ and $\bar
r_k'=F^k_{\beta,g^{-n}(\theta,\alpha')}(1)$ and define
\begin{equation}
\ell=\ell(n)=
\begin{cases}
\max\left\{k\leq n:
\bar r_k'\geq \bar r_k\right\}&\text{if such a $k$ exists}\\
0&\text{otherwise.}
\end{cases}
\end{equation}
Then Lemma~\ref{lemma:main-comparison} applied to
$\bar\theta,\bar\alpha,\bar\alpha'$ and $\bar r=\bar r'=1$ yields
\begin{equation}
  \label{e.critical(c)1}
  \frac{F^n_{\beta,g^{-n}(\theta,\alpha')}(1)}{F^n_{\beta,g^{-n}(\theta,\alpha)}(1)}
  \ = \ \frac{\bar r_n'}{\bar r_n} \ \geq \ \
  \frac{\Omega_{n-\ell}(\bar\theta_\ell,\bar\alpha_\ell')}{\Omega_{n-\ell}(\bar\theta_\ell
    ,\bar\alpha_\ell)} \ .
\end{equation}
Suppose $(\cos2\pi\alpha,\sin2\pi\alpha)=av^u(\theta)+bv^s(\theta)$ and
$(\cos2\pi\alpha',\sin2\pi\alpha')=a'v^u(\theta)+b'v^s(\theta)$, where
$v^u(\theta),v^s(\theta)$ are defined as in the proof of the previous
lemma. We have
\[
\frac{\Omega_{n-\ell}(\bar\theta_\ell,\bar\alpha_\ell')}{\Omega_{n-\ell}(\bar\theta_\ell,
  \bar\alpha_\ell)} \ = \
\frac{\Omega_{-n}(\bar\theta,\bar\alpha)}{\Omega_{-n}(\bar\theta,\bar\alpha')}\cdot
\frac{\Omega_{-\ell}(\bar\theta,\bar\alpha')}{\Omega_{-\ell}(\bar\theta,\bar\alpha)}
\ .
\]
Similar as in the previous proof, the two factors on the right
converge to $b/b'$ and $b'/b$, respectively, as $n$ and $\ell$ go to
infinity. For this reason, there exists a constant $\bar
C(\theta,\alpha,\alpha')$ such that $\frac{\bar r_n'}{\bar r_n} \geq
\bar C(\theta,\alpha,\alpha')$. Using (\ref{e.critical(c)1}), this
means that
\[
F^n_{\beta,g^{-n}(\theta,\alpha')}(1) \ \geq \ \bar
C(\theta,\alpha,\alpha') \cdot F^n_{\beta,g^{-n}(\theta,\alpha)}(1) \ .
\]
If we let $\alpha'=\phi_s(\theta)$, then in the limit $n\to\infty$ we
obtain
\[
0\ = \ \psi^+_\beta(\theta,\phi_s(\theta)) \ \geq \ \bar
C(\theta,\alpha,\alpha')\cdot \psi^+_\beta(\theta,\alpha)
\]
and thus $\psi^+_\beta(\theta,\alpha)=0$ for $m$-a.e.
  $\theta\in\Theta$ and all $\alpha\in\kreis\smin\{\phi_u(\theta)\}$
 as required.  \qed\medskip

\section{Continuous-time models}
\label{CtsModels}

The classical Hopf bifurcation takes place in continuous-time dynamical systems
generated by planar vector fields. Its discrete-time analogue, whose
nonautonomous version we have considered so far, is often called Neimark-Sacker
bifurcation. However, since a Hopf bifurcation for a planar flow corresponds to
a Neimark-Sacker bifurcation of the corresponding time-one map, this is a minor
distinction.  Nevertheless, as we have made quite specific additional
assumptions on the considered models, it is important to note that
continuous-time systems with similar properties exist. The aim of this section
is to provide examples of continuous-time flows, generated by non-autonomous
planar vector fields, whose time-one maps have a similar structure as the maps
considered in the previous sections and therefore exhibit the same bifurcation
pattern. We will only sketch the details and concentrate on the deterministic
setting. Randomly forced examples can be produced in an analogous way.

Suppose that $\Theta$ is a compact metric space and $\omega :
\R\times\Theta \to \Theta,\ (t,\theta)\mapsto \omega_t\theta$ is a
continuous flow on $\Theta$. First, consider the linear
two-dimensional ordinary differential equation
\begin{equation}
  \label{eq:1}
  \twovector{x}{y}' \ = \ B(\omega_t \theta) \twovector{x}{y} \
\end{equation}
with continuous $B=\twomatrix{\hat a_\theta}{\hat b_\theta}{\hat
c_\theta}{\hat
  d_\theta} :\Theta\to \textrm{sl}(2,\R)$. The time-one map of the generated
flow is given by a linear $\sltr$-cocycle $(\omega_1,A_1)$, with
continuous $A_1:\Theta\to\sltr$ obtained by integrating $B$ along
the orbits of $\omega$. In polar coordinates
$(\alpha,r)=\left(\arctan(y/x)/\pi \bmod 1, \sqrt{x^2+y^2}\right)$,
equation (\ref{eq:1}) is written as
\begin{eqnarray}
  \label{eq:2}
  \alpha' & = &  \frac{1}{\pi} \left(\hat c_{\omega_t\theta}\cos^2\pi\alpha +
    (\hat d_{\omega_t\theta}-\hat a_{\omega_t\theta})\cos\pi\alpha\sin\pi\alpha -
   \hat b_{\omega_t\theta}\sin^2\pi\alpha \right) \\
  r' & = & \gamma(\omega_t\theta,\alpha) \cdot r \ \label{eq:2b}
\end{eqnarray}
with $\gamma(\theta,\alpha)= \hat a_\theta \cos^2\pi\alpha + (\hat
b_\theta+\hat c_\theta)\sin\pi\alpha\cos\pi\alpha + \hat
d_\theta\sin^2\pi\alpha$. The time-one map of this system is given by
\begin{equation}
  \label{eq:3}
  \widehat F(\theta,\alpha,r) \ = \ \left(g_1(\theta,\alpha),
\Omega_1(\theta,\alpha)\cdot r\right) \ ,
\end{equation}
where $g_1(\theta,\alpha) = (\omega_1\theta,g_{1,\theta}(\alpha))$ is
the projective action of the cocycle $(\omega_1,A_1)$ and the factor
$\Omega_1(\theta,\alpha)$ is obtained by integrating (\ref{eq:2b}).

In order to introduce a bifurcation parameter and to make the fibre
maps concave in $r$, we now replace (\ref{eq:1})
by
\begin{equation}
  \twovector{x}{y}' \ = \ \left(\strut B(\omega_t \theta)+(\beta+\eta(r))E\right)
\twovector{x}{y} \
\end{equation}
with a non-positive decreasing $\mathcal{C}^2$-function $\eta:\R^+\to
\R$ such that $r\mapsto \eta(r)\cdot r$ is concave, $\eta(0)=0$ and
$\lim_{r\to\infty}\eta(r)=-\infty$.  This results in the modified
equation
\begin{equation}
  \label{eq:4}
  r' \ = \ \left(\gamma(\omega_t\theta,\alpha)+\beta+\eta(r)\right)\cdot r \
\end{equation}
replacing (\ref{eq:2b}), while (\ref{eq:2}) is unaffected.
The resulting time-one map will be
of the form
\begin{equation}
  \label{eq:5}
  F_\beta(\theta,\alpha,r) \ = \ (g_1(\theta,\alpha),F_{\beta,\theta,\alpha}(r)) \ ,
\end{equation}
with fibre maps $F_{\beta,\theta,\alpha}:\R^+\to\R^+$ that have the
following properties:
\begin{itemize}
\item $F_{\beta,\theta,\alpha}$ is a strictly increasing
  $\mathcal{C}^2$-function;
\item $F_{\beta,\theta,\alpha}(0)=0$;
\item $F_{\beta,\theta,\alpha}'(0)=e^\beta\cdot \Omega_1(\theta,\alpha)$;
\item $\beta\mapsto F_{\beta,\theta,\alpha}(r)$ is strictly increasing
  for all $\theta\in\Theta,\ \alpha\in\kreis$ and $r>0$.
\item $F_{\beta,\theta,\alpha}$ is strictly concave (due to the
  concavity of the right side of (\ref{eq:4})).
\item $\sup_{\theta,\alpha,r} F_{\beta,\theta,\alpha} < \infty$ (due
  to the facts that $\lim_{r\to\infty} \eta(r)=-\infty$ and $\gamma$
  is uniformly bounded).
\end{itemize}
While the fibre maps of $F_\beta$ are not exactly of the same form as
in Lemma~\ref{l.conjugacy}(iii), they have all the qualitative
features that were used in our analysis. The specific form of the maps
in (\ref{f_beta}) was chosen for reasons of presentation and
readability, but all arguments go through in the generality needed to
treat maps with the above properties.  Thus, the family
$(F_\beta)_{\beta\in\R}$ satisfies all the assertions of
Theorem~\ref{theorem_topological} with critical parameters
$\beta_1=-\lmax(A_1)$ and $\beta_2=\lmax(A_1)$.  Analogous statements
with obvious modifications for the continuous-time case hold for the
flow generated by (\ref{eq:2}) and (\ref{eq:4}).

\section{Simulations} \label{Examples}
In this section, we illustrate the preceding results by an explicit example
$f_\beta:\kreis\times\R^2\rightarrow\kreis\times\R^2$ with skew product
structure $(\theta,v)\mapsto(\gamma \theta, f_{\beta,\theta}(v))$. For
simplicity, the base transformation is chosen to be an irrational rotation of
the circle, that is, $\gamma:\kreis\rightarrow\kreis$,
$\theta\mapsto\theta+\varrho \mod 1$, where $\varrho$ is the golden mean. The
fibre maps are defined by
\begin{equation} f_{\beta,\theta}(v) \ = \
  \left\{\begin{array}{cl} h(\beta\|v\|) A(\theta) \frac{v}{\|v\|} &
      \textrm{if } v\neq 0\\ \ \\0 & \textrm{if } v=0
\end{array}\right. \quad ,
\end{equation}
where $h(x)=\frac{1}{3\sqrt{c}}\arctan(x)$ and $A(\theta)=\left(
                                          \begin{array}{cc}
                                            c^{-1/2} & 0 \\
                                            0 & c^{1/2} \\
                                          \end{array}
                                        \right)
                                        \left(
                                          \begin{array}{cc}
                                            \cos(2\pi\theta) &
                                            \sin(2\pi\theta)\\
                                            -\sin(2\pi\theta) & \cos(2\pi\theta)
                                            \\
                                          \end{array}
                                        \right)=:
                                        \left(
                                          \begin{array}{cc}
                                            a_\theta & b_\theta \\
                                            c_\theta & d_\theta \\
                                          \end{array}
                                        \right).
$

The map $f_\beta$ is easily seen to satisfy (D1)--(D3) as well as
condition (\ref{e.scaling}). Therefore, $f_\beta$ satisfies Theorem
\ref{theorem_topological} and for $c=1/2$, the bifurcation
parameters (determined by the maximal exponential expansion rate of
the cocycle $(\gamma,A)$, see \cite[Section 4.1]{herman:1983}) are
given by $\beta_1=3\sqrt{2}e^{-\lambda(A)}=3\sqrt{2}\left(
\frac{2\sqrt{2}}{3}\right)=4$ and
$\beta_2=3\sqrt{2}e^{\lambda(A)}=3\sqrt{2}\left(
\frac{1}{\sqrt{2}}+\frac{1}{2\sqrt{2}}\right)=4.5$.

\begin{figure}[h!]
\subfigure[$\beta_1<\beta=4.080<\beta_2$]{\epsfig{file=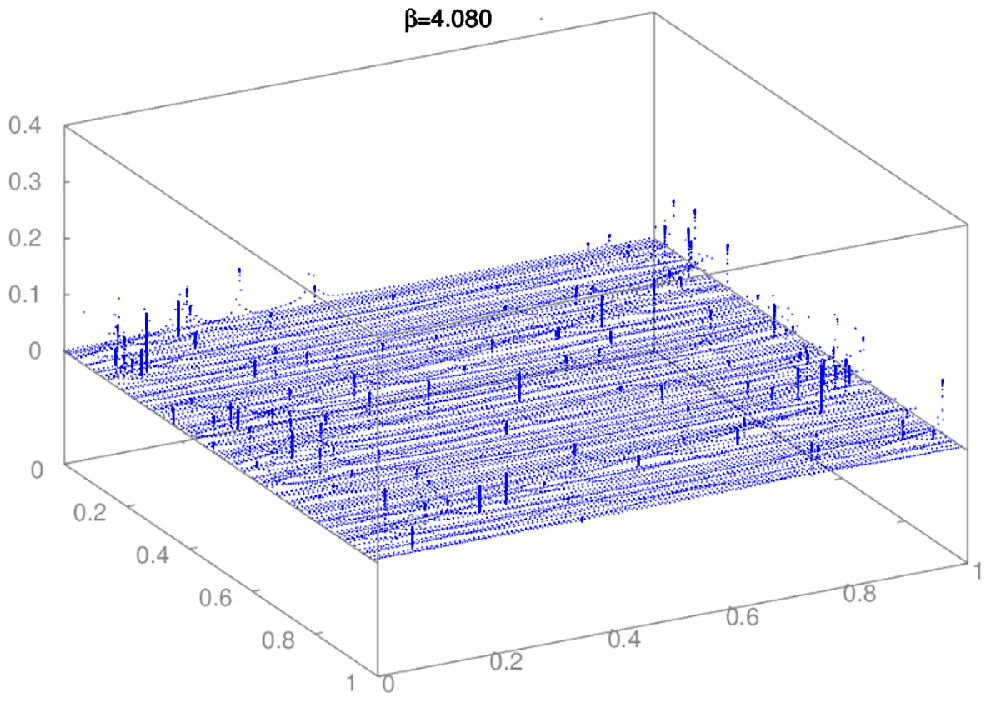,
width=0.5\linewidth}}
  \subfigure[$\beta_1<\beta=4.475<\beta_2$]{\epsfig{file=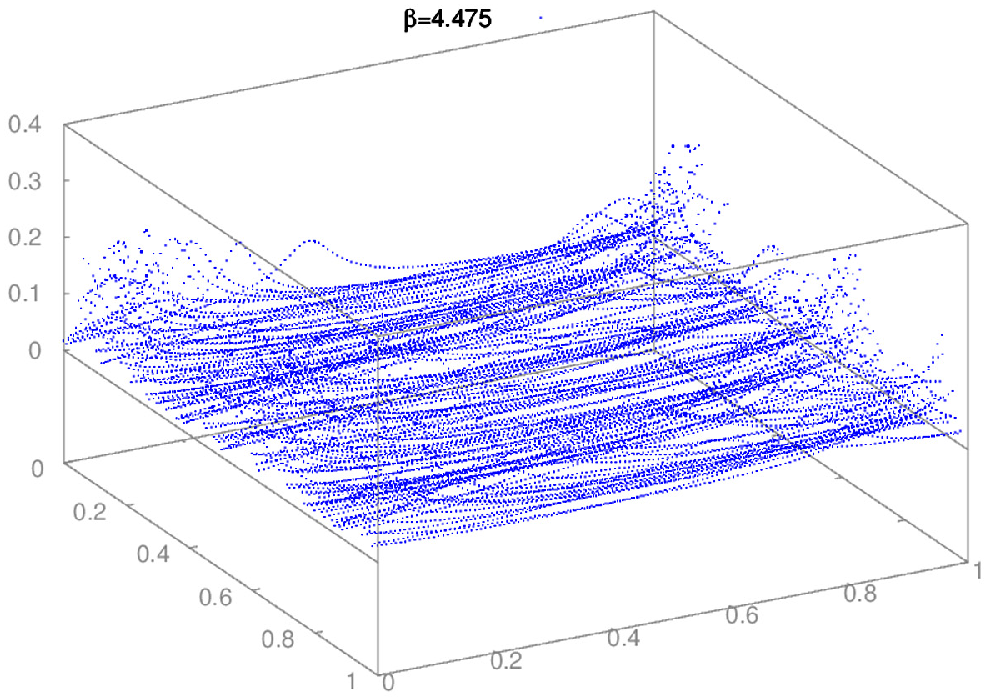, width=0.5\linewidth}}
\subfigure[$\beta=4.650>\beta_2$]{\epsfig{file=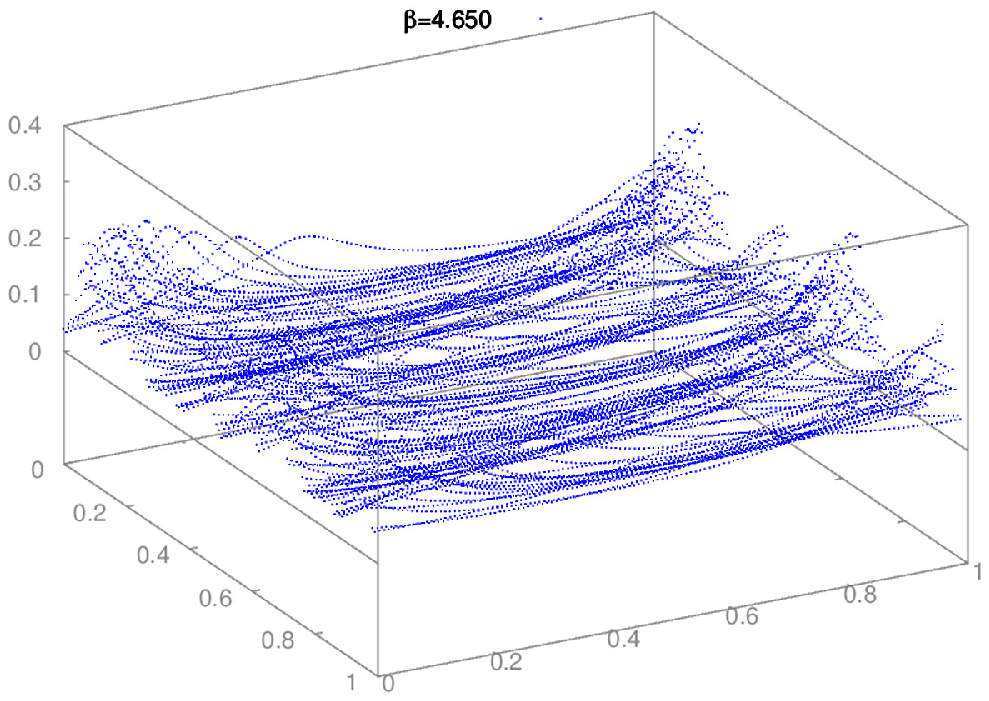,
width=0.5\linewidth}}
  \subfigure[$\beta=6.080>\beta_2$]{\epsfig{file=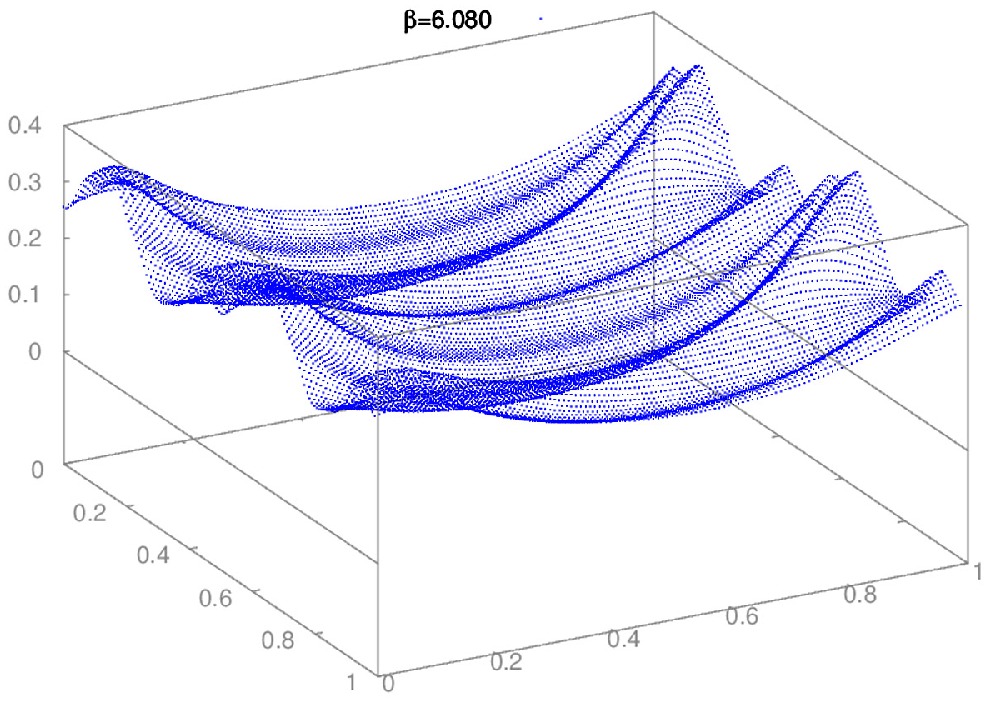, width=0.5\linewidth}}
\caption{The global attractor of the induced polar coordinate system $F_\beta$.}\label{figure_polar}
 \end{figure}
\newpage

As in Section \ref{top.doubleskew}, we use polar coordinates in
order to study the induced system
$F_\beta:\kreis\times\kreis\times[0,\infty)\rightarrow\kreis\times\kreis\times[0,\infty)$,
given by $F_\beta(\theta,\alpha,r)=(\gamma\theta,
g_\theta(\alpha),F_{\beta,\theta,\alpha}(r))$, where
$g_\theta(\alpha)=\frac{1}{\pi}\arctan\left(
\frac{c_\beta+d_\theta\tan\pi\alpha}{a_\beta+b_\theta\tan\pi\alpha}\right)
\mod 1$, and $F_{\beta,\theta,\alpha}(r)=\frac{\arctan(\beta
r)}{3\sqrt{2}}|| A(\theta)(\cos\pi\alpha,\sin\pi\alpha)||$

Figures \ref{figure_polar}(a)--\ref{figure_polar}(d) illustrate the
behaviour of the induced polar coordinate system $F_\beta$. Figures
\ref{figure_polar}(a) and \ref{figure_polar}(b), show the global
attractor shortly after the first critical parameter $(\beta_1)$ and
just before the second $(\beta_2)$, respectively. Figure
\ref{figure_polar}(c) shows $\mathcal{A}_\beta(\theta)$ shortly after
$\beta_2$ where the invariant torus has just formed, and finally,
Figure \ref{figure_polar}(d) shows the split off torus far from the
bifurcation (all in polar coordinates). For the same values of
$\beta$, Figures \ref{figure_original}(a)--\ref{figure_original}(d)
illustrate the behaviour of the global attractor for the original
system $f_\beta$.

\begin{figure}[h!]
\subfigure[$\beta_1<\beta=4.080<\beta_2$]{\epsfig{file=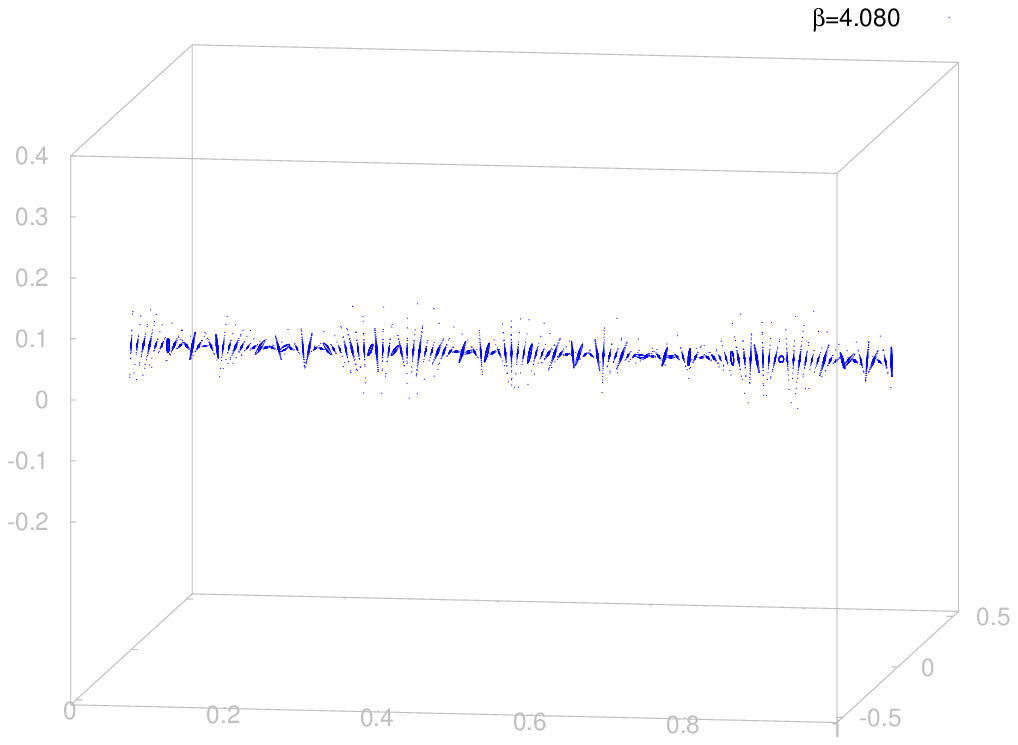,
width=0.49\linewidth}}
  \subfigure[$\beta_1<\beta=4.475<\beta_2$]{\epsfig{file=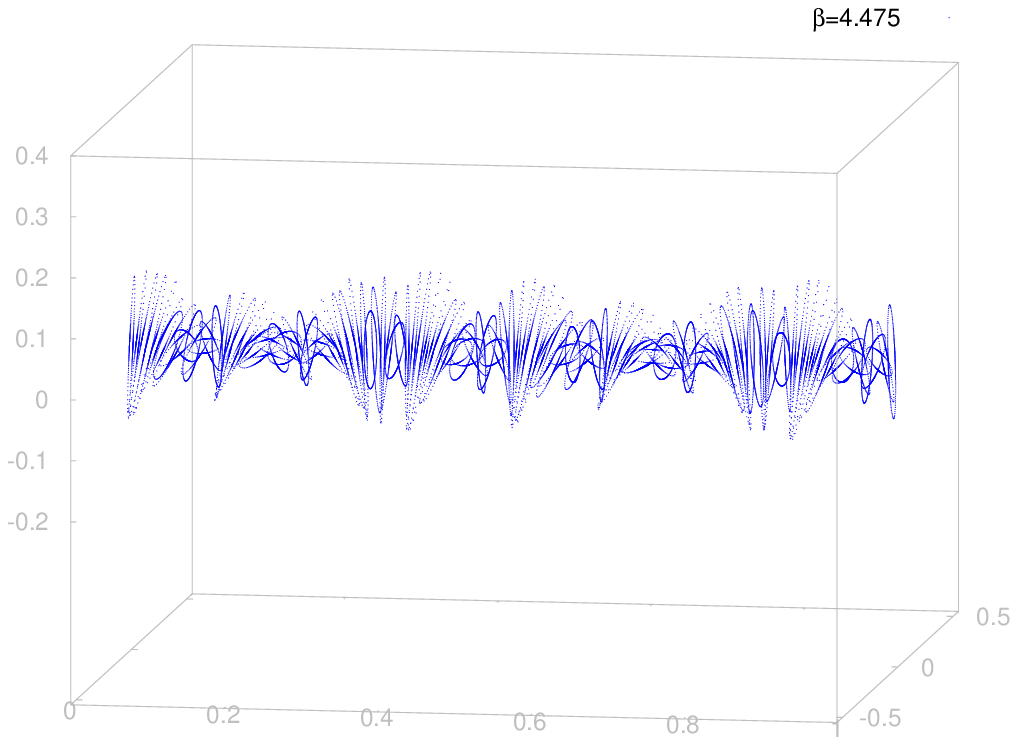, width=0.49\linewidth}}
\subfigure[$\beta=4.650>\beta_2$]
{\epsfig{file=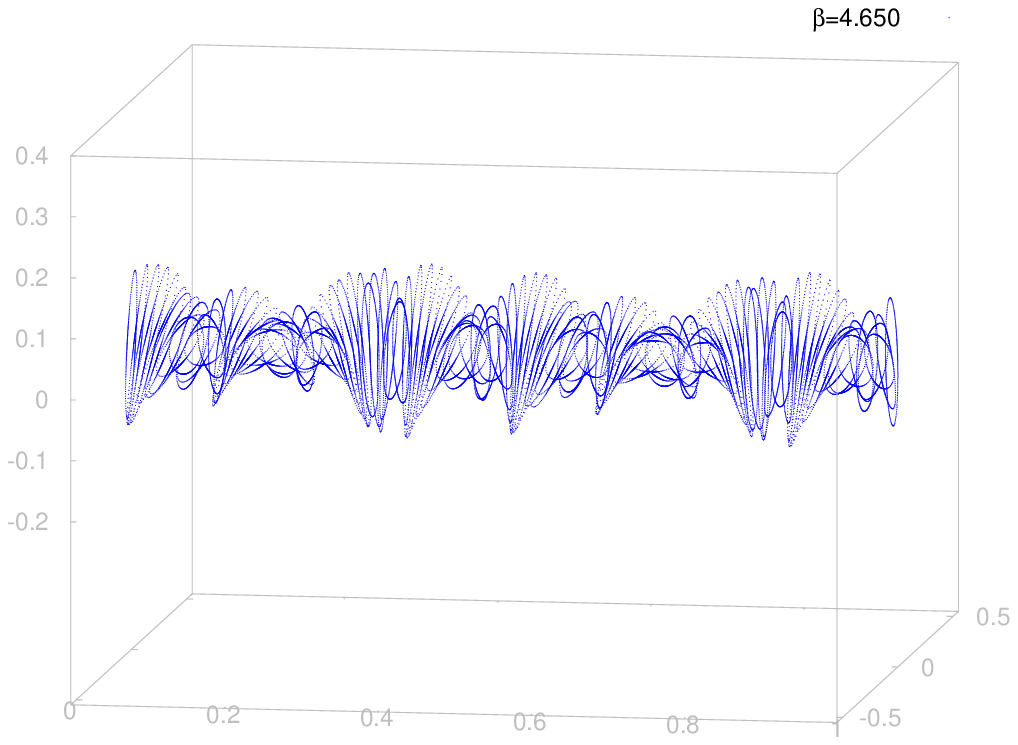, width=0.49\linewidth}}
  \subfigure[$\beta=6.080>\beta_2$]{\epsfig{file=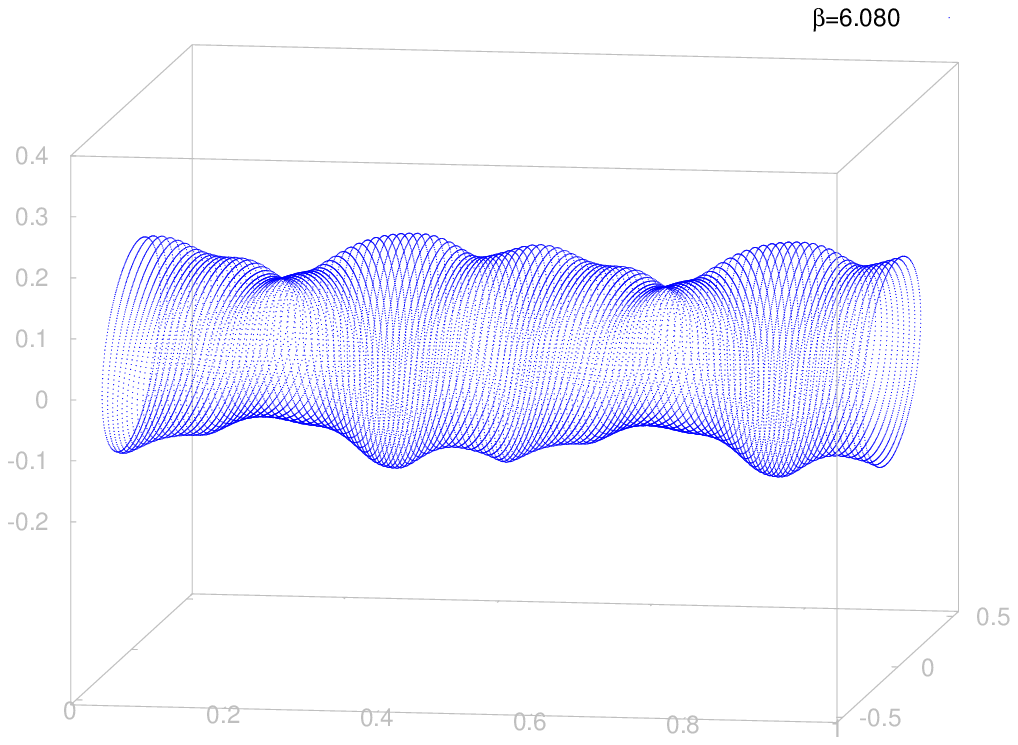, width=0.49\linewidth}}
\caption{The global attractor of the original system $f_\beta$.}\label{figure_original}
 \end{figure}

\begin{figure}[h!]
  \subfigure[$\beta_1<\beta=4.475<\beta_2$]{
     \epsfig{file=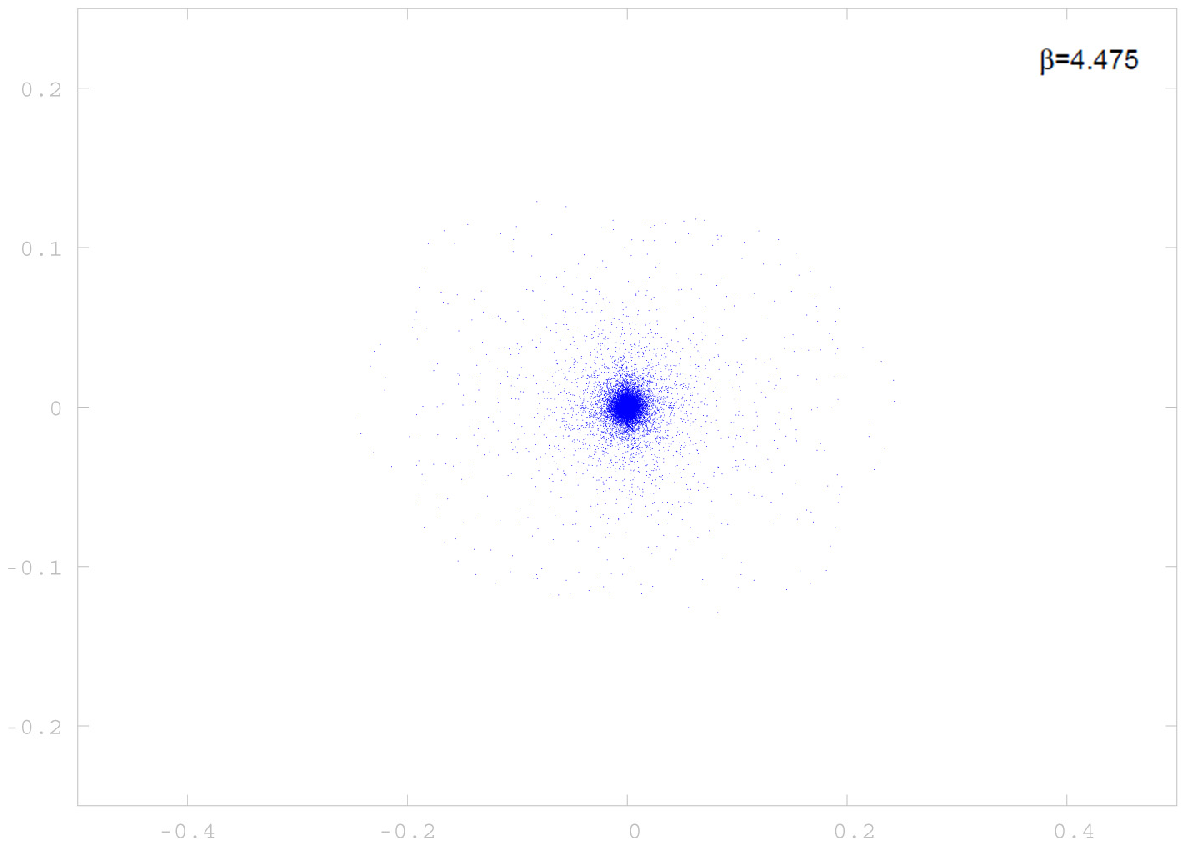,
      width=0.49\linewidth}}
  \subfigure[$\beta=4.465>\beta_2$]{
    \epsfig{file=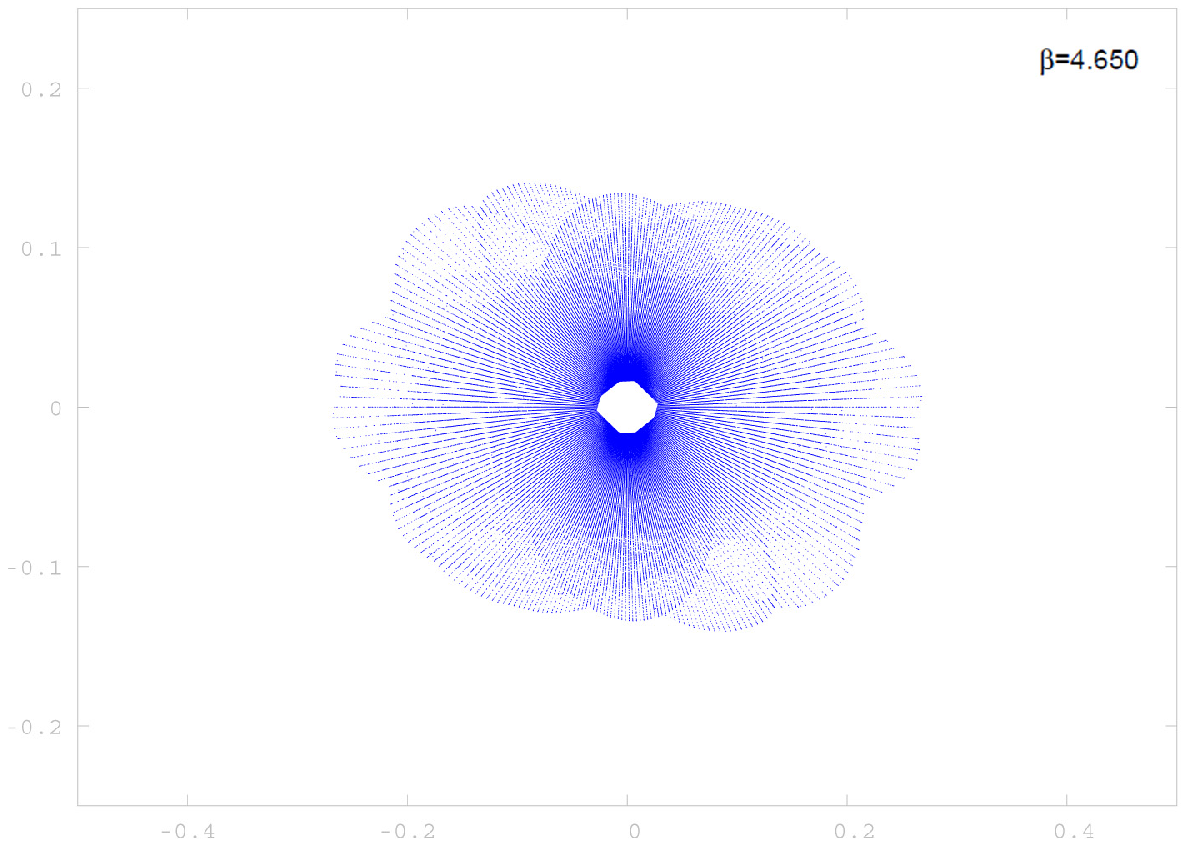,
      width=0.49\linewidth}}
  \caption{2D projection of the global attractor onto $\R^2$}\label{figure_2d}
 \end{figure}

Finally, Figures 7.3(a) and 7.3.(b) show a 2D projection of the
torus onto $\R^2$ for $\beta=4.475$ (before the torus has formed),
and $\beta=4.465$ (after the torus has split off), respectively.

 All pictures were produced by using a mixture of pullback and forward iteration
 for a fixed grid of $(\theta,\alpha)$-coordinates.


\end{document}